\documentclass[fleqn,final]{zzz}
\usepackage{mathrsfs}
\usepackage{color}
\usepackage{tikz}
\usepackage{float}
\usepackage{graphicx}
\usepackage{rotating}
\usepackage[pdftex]{hyperref}
\usepackage{enumerate}
\usepackage{soul}
\usepackage{comment}
\usepackage{xparse}
\usepackage{chngcntr}
\usepackage{apptools}
\usepackage{caption}
\usepackage[normalem]{ulem}
\usepackage{subcaption}
\usepackage{arydshln}

\hypersetup{
colorlinks=true,
urlcolor=blue,
linkcolor=red,
citecolor=green
}

\topmargin -1.6cm

%%%% roberto inserted

\newcommand\boro[1]{{\textcolor{black}{#1}}}
%%%% final Roberto
%\newcommand\moro[1]{{\textcolor{black}{#1}}}
%\newcommand\noro[1]{{\textcolor{black}{#1}}}
%\newcommand\goro[1]{{\textcolor{black}{#1}}}
%\newcommand\poro[1]{{\textcolor{black}{#1}}}
%\newcommand\doro[1]{{\textcolor{black}{#1}}}

\newcommand\TT{\rule{0pt}{2.9ex}}
\newcommand\TB{\rule[-1.5ex]{0pt}{0pt}}

\newcommand{\Whyp}[5]{\,\mbox{}_{#1}W_{#2}\!\left({#3};{#4};{#5}\right)}
\newcommand{\qhyp}[5]{\,\mbox{}_{#1}\phi_{#2}\!\left(
\genfrac{}{}{0pt}{}{#3}{#4};#5\right)}

\NewDocumentCommand{\qfrac}{smm}{%
 \dfrac{\IfBooleanT{#1}{\vphantom{\big|}}#2}{\mathstrut #3}%
}

\newcommand{\nqphyp}[3]{\,\sideset{_{#1}^{\phantom{\mid}}}{_{#2}^{#3}}{\mathop{\phi}}}
\newcommand{\nlqphyp}[3]{\,\sideset{_{#1}}{_{#2}^{#3}}{\mathop{\phi}}}
\newcommand{\rphis}[2]{{}_{#1\vphantom{#2}}\phi_{#2\vphantom{#1}}}
\newcommand{\rphisx}[4]{\rphis{#1}{#2}\left( \begin{array}{c} #3 \end{array};q,#4\right)}

\newcommand{\qphyp}[6]{\,\sideset{_{#1}^{\phantom{\mid}}}{_{#2}^{#3}}{\mathop{\phi}}\!\left(\genfrac{}{}{0pt}{}{#4}{#5};#6\right)}

\def\cprime{$'$}

\makeatletter
\def\eqnarray{\stepcounter{equation}\let\@currentlabel=\theequation
\global\@eqnswtrue
\tabskip\@centering\let\\=\@eqncr
$$\halign to \displaywidth\bgroup\hfil\global\@eqcnt\z@
  $\displaystyle\tabskip\z@{##}$&\global\@eqcnt\@ne
  \hfil$\displaystyle{{}##{}}$\hfil
  &\global\@eqcnt\tw@ $\displaystyle{##}$\hfil
  \tabskip\@centering&\llap{##}\tabskip\z@\cr}

\def\endeqnarray{\@@eqncr\egroup
      \global\advance\c@equation\m@ne$$\global\@ignoretrue}

\def\@yeqncr{\@ifnextchar [{\@xeqncr}{\@xeqncr[5pt]}}
\makeatother

\parskip=0pt

\newcommand{\N}{{\mathbb N}}

\newcommand{\R}{{\mathbb R}}
\newcommand{\Z}{{\mathbb Z}}
\newcommand{\CC}{{\mathbb C}}
\newcommand{\expe}{{\mathrm e}}
\newcommand{\CCast}{{{\mathbb C}^\ast}}
\newcommand{\CCdag}{{{\mathbb C}^\dag}}
\newcommand{\SSS}{{\mathcal S}}

\newtheorem{thm}[lemma]{Theorem}
\newtheorem{cor}[lemma]{Corollary}
\newtheorem{rem}[lemma]{Remark}

\newtheorem{prop}[lemma]{Proposition}
\newtheorem{defn}[lemma]{Definition}

\newcommand{\mlt}[1]{ \begin{tabular}[c]{@{}c@{}}#1\end{tabular}}

%\definecolor{darkgreen}{rgb}{0.0, 0.42, 0.24}
\definecolor{darkgreen}{rgb}{0.0, 0.21, 0.06}

\begin{document}

\renewcommand{\PaperNumber}{***}

\FirstPageHeading

\ShortArticleName{%
%Symmetry of terminating basic hypergeometric representations of the Askey--Wilson Polynomials
Symmetry of terminating series representations of the Askey--Wilson polynomials
}

\ArticleName{%
%Symmetry of terminating basic hypergeometric\\ representations of the Askey--Wilson polynomials
Symmetry of terminating basic hypergeometric series representations of the Askey--Wilson polynomials
}

% Names of the authors for the title of the paper
\Author{Howard S. Cohl\,$^\dag\!\!\ $ and Roberto 
S. Costas-Santos\,$^\S\!\!$}

\AuthorNameForHeading{H.~S.~Cohl, R.~S.~Costas-Santos}
\Address{$^\dag$ Applied and Computational 
Mathematics Division, National Institute of Standards 
and Tech\-no\-lo\-gy, Mission Viejo, CA 92694, USA
%Address of First Author, Country
\URLaddressD{
\href{http://www.nist.gov/itl/math/msg/howard-s-cohl.cfm}
{http://www.nist.gov/itl/math/msg/howard-s-cohl.cfm}
}
} % Address of First Author
\EmailD{howard.cohl@nist.gov} % E-mail address of First Author

\Address{$^\S$ Dpto. de F\'isica y Matem\'{a}ticas,
Universidad de Alcal\'{a},
c.p. 28871, Alcal\'{a} de Henares, Spain} 
% Address of First Author
\URLaddressD{
\href{http://www.rscosan.com}
{http://www.rscosan.com}
}
\EmailD{rscosa@gmail.com} % E-mail address of First Author

%\Address{$^{\S\S}$ Department of Mathematics,
%University of Rochester, Rochester, NY 14627, USA
%% Address of First Author
%}
%\EmailD{random9483@gmail.com} % E-mail address of First Author

\ArticleDates{Received \today~in final form ????; 
Published online ????}

\Abstract{In this paper, we explore the symmetric nature of the terminating basic hypergeometric series
representations of the Askey--Wilson polynomials and the corresponding terminating basic 
hypergeometric transformations that these polynomials satisfy. In particular we identify and 
classify the set of 4 and 7 equivalence classes of terminating balanced ${}_4\phi_3$ and terminating 
very-well poised ${}_8W_7$ basic hypergeometric series which are connected with the Askey--Wilson polynomials. 
We study the inversion properties of these equivalence classes and also identify the connection of both 
sets of equivalence classes with the symmetric group $S_6$, the symmetry group of the terminating 
balanced ${}_4\phi_3$. We then use terminating balanced ${}_4\phi_3$ and terminating very-well 
poised ${}_8W_7$ transformations to give a broader interpretation of Watson's $q$-analog of Whipple's 
theorem and its converse. We give a broad description of the symmetry structure of the terminating
basic hypergeometric series representations of the Askey--Wilson polynomials.
} 

%``Symmetry, Integrability and Geometry: Methods and Applications''.}
\Keywords{Basic hypergeometric series; 
Basic hypergeometric orthogonal polynomials; 
Basic hypergeometric transformations}

%Please type here List of Keywords for your article separated 
%by semicolon.
% Keywords required only for MST, PB, PMB, PM, JOA, JOB?
% Keywords:

\Classification{33D15, 33D45}
%{??????} % e.g. 35A30; 81Q05
%For 2010 Mathematics Subject Classification see
%http://www.ams.org/mathscinet/msc/msc2010.html

%\dedicatory{} 
\begin{flushright}
\begin{minipage}{70mm}
\it Dedicated to the life and mathematics\\ of Dick Askey, 1933-2019.
\end{minipage}
\end{flushright}

\section{Introduction}
\label{Introduction}

This paper is a study in $q$-calculus (typically taken with $|q|<1$).
The $q$-calculus (introduced by such luminaries as Leonhard Euler,
Eduard Heine and Garl Gustav Jacobi)
is a calculus of finite differences which becomes the standard infinitesimal calculus (introduced by
Isaac Newton and Gottfried Wilhelm Leibniz) in the limit
as $q\to 1$. One of the most important aspects
of $q$-calculus is the theory of basic hypergeometric
series which are the $q$-analogue
of generalized hypergeometric series. \boro{Observe that these 
%are deeply connected to the theory of \goro{basic hypergeometric orthogonal polynomials which 
obey a natural scheme which is often 
referred to as the scheme of basic hypergeometric 
orthogonal polynomials}. 
\boro{Hereafter we refer to this scheme,
which represents a hierarchy
of basic hypergeometric orthogonal polynomials
(see e.g., \cite[p.~414]{Koekoeketal}),
as the $q$-Askey scheme, in honor of Dick Askey who 
was instrumental in the understanding and classification of
hypergeometric orthogonal polynomials.}
The Askey--Wilson
polynomials are at the very top of the $q$-Askey 
scheme and all polynomials within the $q$-Askey scheme can be
written as either a 
specialization or limit of the 
Askey--Wilson polynomials.
The work contained in this paper is
associated with the Askey--Wilson polynomials $p_n(x;{\bf a}|q)$
\cite[\S 14.1]{Koekoeketal}.
The Askey--Wilson polynomials
are basic hypergeometric orthogonal
polynomials with interpretations
in quantum group theory, 
combinatorics, and probability.
The applications of Askey--Wilson
polynomials include invariants of
links, ${\textit{3}}$-manifolds 
and ${\textit{6}}j$-symbols
(see e.g., \cite{Rosengren2007}).
The definition of the Askey--Wilson
polynomials in terms of terminating
basic hypergeometric series
is given in
Theorem \ref{AWthm} below.
%The polynomials in the $q$-Askey scheme are terminating basic 
%hypergeometric orthogonal polynomials 
%%which are defined in terms of terminating basic hypergeometric functions 
%(see e.g., \cite{GaspRah}).
%At the very top of the $q$-Askey scheme, lies the Askey--Wilson 
%polynomials 
%whose full definition can be found below 
The Askey--Wilson polynomials are symmetric with respect to their 
four free parameters, that is, they remain unchanged upon 
interchange of any two of the
four free parameters. 
\boro{It should be emphasized that since
1970, the subjects of special functions
and special families of
orthogonal polynomials have gone
through major developments, 
of which the study of the Askey--Wilson
polynomials has been central.
Many of the properties of these
polynomials can be derived from
their terminating basic hypergeometric
representations, so an exhaustive
catalog of these representations, as contained here,
will be quite convenient for lookup.}

From the terminating basic hypergeometric representations of the 
Askey--Wilson polynomials, one can easily 
derive transformation formulas for terminating basic hypergeometric functions.
%, using known properties of basic hypergeometric functions, basic 
%hypergeometric representations of the symmetric subfamilies of the 
%Askey--Wilson and the $q$-inverse Askey--Wilson polynomials.
The main focus of this survey paper will be to exhaustively describe the 
transformation identities for the terminating basic hypergeometric functions which
appear as representations for these polynomials. Some of these transformation 
identities are well-known in the literature, but we also give the transformation 
identities for these basic hypergeometric functions which are 
obtained by the symmetry of the polynomials under parameter interchange, 
and under the map $\theta\mapsto-\theta$, for $x=\cos\theta$.

This paper follows from the preliminary work done by the
authors in \cite{CohlCostasSantosGe,CohlCostasSantos20c}. In order to study the symmetry properties of the terminating basic hypergeometric functions which appear in the series representations of the Askey--Wilson polynomials, a detailed parametric connection between them was provided in 
\cite[Corollary 3]{CohlCostasSantosGe}. However, there were some typographical errors in that result and some representations which arise by inversion were inadvertently left off. An attempt to remedy this was executed in \cite{CohlCostasSantos20c} (we also missed some of the connections between classes of 4-parameter symmetric interchange transformations in \cite[\S3.3]{CohlCostasSantosGe}, a complete description is now given in Appendix \ref{fullcol} below). 

Further continuation 
of our study of the mapping properties of these functions was made clear by previous and our work on the group theoretic description of the transformation properties of these functions (see e.g., \cite{Lievensetal2007,VanderJeugtRao1999} and Propositions \ref{firstS6}, \ref{secS6} below). This work in this present paper provides a framework for future work on the symmetry analysis of terminating basic hypergeometric functions which is more complicated than that for the nonterminating case \cite{VanderJeugtpriv2020} and that it is not surprising that the classes of terminating basic hypergeometric functions are not connected by the known nonterminating transformations (see Figures \ref{figinv}, \ref{figinvinv}, \ref{figinvinvinv} below). In this paper, for the first time, we present the full symmetry structure of the terminating ${}_8W_7$ representations for the Askey--Wilson polynomials and a detailed connection with the terminating balanced ${}_4\phi_3$ representations.

\section{Preliminaries}
\label{Preliminaries}
We adopt the following set 
notations:~$\N_0:=\{0\}\cup\N=\{0, 1, 2, ...\}$, and we use the sets $\Z$, $\R$, $\CC$ which represent 
the integers, real numbers  and complex numbers respectively, 
$\CCast:=\CC\setminus\{0\}$,
and $\CCdag:=\CCast\setminus
\{z\in\CC: |z|=1\}$.
We also adopt the following notation and conventions.
Let ${\bf a}:=\{a_1, a_2, a_3, a_4\}$, $b, a_k\in\CC$, $k=1, 2, 3, 4$. 
Define
${\bf a}+b:=\{a_1+b,a_2+b,a_3+b,a_4+b\}$,
$a_{12}:=a_1a_2$,
$a_{13}:=a_1a_3$,
$a_{23}:=a_2a_3$,
$a_{123}:=a_1a_2a_3$,
$a_{1234}:=a_1a_2a_3a_4$, etc. 
%Consider a sequence of complex numbers $\{a_k\}$, $k\in\N_0$. 
%We adopt the following sum and product notations
%\[
%a_1+\cdots+a_n:=\sum_{k=1}^n a_k,\quad
%a_1\cdots %a_n:=\prod_{k=1}^n a_k.
%\]
%Furthermore, let $s,r\in\N_0$, with $s<r$. 
%Then 
Throughout the paper, we assume that the empty sum 
vanishes and the 
empty product 
is unity.
%, namely
%\[
%\sum_{k=r}^s a_k=0, 
%\quad \prod_{k=r}^s %a_k=1.
%\]

\begin{defn} \label{def:2.1}
Throughout this paper we adopt the following conventions for succinctly 
writing elements of lists. To indicate sequential positive and negative 
elements, we write
\[
\pm a:=\{a,-a\}.
\]
\noindent We also adopt an analogous notation
\[
\expe^{\pm i\theta}:=\{\expe^{i\theta},\expe^{-i\theta}\}.
\]
%If $\pm$ appears in an expression, but not in 
%a list, it is to be treated as normal.
\noindent In the same vein, consider the numbers $f_s\in\CC$ with $s\in{\mathcal S}\subset \N$,
with ${\mathcal S}$ finite.
Then, the notation
$\{f_s\}$
represents the set of all complex numbers $f_s$ such that 
$s\in\SSS$.
Furthermore, consider some $p\in\SSS$, then the notation
$\{f_s\}_{s\ne p}$ represents the sequence of all complex numbers
$f_s$ such that $s\in\SSS\!\setminus\!\{p\}$.
In addition, for the empty list, $n=0$, we take
\[
\{a_1,{...},a_n\}:=\emptyset.
\]
\end{defn}

Consider $q\in\CCdag$.
Define the sets 
$\Omega_q^n:=\{q^{-k}:n,k\in\N_0,~0\le k\le n-1\}$,
$\Omega_q:=\Omega_q^\infty=\{q^{-k}:k\in\N_0\}$.
In order to obtain our derived identities, we rely on properties 
of the $q$-Pochhammer symbol ($q$-shifted factorial). 
For any $n\in \N_0$, $a,q \in \CC$, 
the 
%Pochhammer symbol, and 
$q$-Pochhammer symbol is defined as
%\vspace{12pt}
\begin{eqnarray}
%&&\hspace{-3.3cm}%(a)_0:=1, \quad 
%(a)_n:=(a)(a+1)\cdots(a+n-1),\quad n\in\N_0,\nonumber\\
&&\hspace{-5.3cm}\label{poch.id:1} %(a;q)_0:=1, \quad 
(a;q)_n:=(1-a)(1-aq)\cdots(1-aq^{n-1}),\quad n\in\N_0.
\end{eqnarray}
One may also define
\begin{equation}
(a;q)_\infty:=\prod_{n=0}^\infty (1-aq^{n}), \label{poch.id:2}
\end{equation}
where $|q|<1$.
%%Furthermore, define for all $a,b\in\CC$, 
%%\[
%%(a)_b:=\dfrac{\Gamma(a+b)}{\Gamma(a)},
%%%=\dfrac{(a)_\infty}{(a+b)_\infty},
%%\]
%%where $a+b\not\in-\N_0$, 
%Furthermore, define
%\[
%(a;q)_b:=\frac{(a;q)_\infty}{(a q^b;q)_\infty}.
%\]
%where $a q^b\not \in \Omega_q$.
We will also use the common notational product conventions
\begin{eqnarray}
%&&\hspace{-8cm}(a_1,...,a_k)_b:=(a_1)_b\cdots(a_k)_b,\nonumber\\
&&\hspace{-8cm}(a_1,...,a_k;q)_b:=(a_1;q)_b\cdots(a_k;q)_b.\nonumber
\end{eqnarray}
%$q$-Pochhammer symbols
%are used in $q$-special functions.
%We define the $q$-factorial as 
%\cite[(1.2.44)]{GaspRah}
%\[
%[0]_q!:=1, \ [n]_q!:=[1]_q[2]_q\cdots [n]_q, \quad n\in\N,
%\]
%where the $q$-number is defined as \cite[(1.8.1)]{Koekoeketal}
%\[
%[z]_q:=\frac{1-q^z}{1-q}, \quad z\in \mathbb C.
%\]
%Note that $[n]_q!=(q;q)_n / (1-q)^n$, $n\in\N_0$.

The following properties for the $q$-Pochhammer 
symbol can be found in Koekoek et al. 
\cite[(1.8.7), (1.8.10-11), (1.8.14), (1.8.19), (1.8.21-22)]{Koekoeketal}, 
namely for appropriate values of $q,a\in\CCast$ and $n,k\in\mathbb N_0$:
\begin{eqnarray}
\label{poch.id:3} &&\hspace{-8.5cm}
%(a;q)_n=q^{\binom n 2}(-a)^n(a^{-1};q^{-1})_n,
(a;q^{-1})_n=(a^{-1};q)_n(-a)^nq^{-\binom{n}{2}}.
%\\[2mm] 
%\label{poch.id:4}
%&&\hspace{-6.5cm}(a;q)_{n+k}=(a;q)_k(aq^k;q)_n 
%= (a;q)_n(aq^n;q)_k,\\[2mm] 
%\label{poch.id:5}&&\hspace{-6.5cm} (a;q)_n=(q^{1-n}/a;q)_n(-a)^nq^{\binom{n}{2}},\\[2mm]
%\label{poch.id:6}&&\hspace{-6.5cm}(aq^{-n};q)_{k}=q^{-nk}
%\frac{(q/a;q)_n}{(q^{1-k}/a;q)_n}(a;q)_k,\\[2mm]
%\label{poch.id:8}&&\hspace{-6.5cm}(a^2;q^2)_n=(\pm a;q)_n,\\[2mm]
%\label{poch.id:7}&&\hspace{-6.5cm}(a;q)_{2n}=(a,aq;q^2)_n=(\pm\sqrt{a},\pm\sqrt{qa};q)_n.
\end{eqnarray}
%\noindent Observe that by using (\ref{poch.id:1}) and 
%(\ref{poch.id:8}), one obtains
%\begin{eqnarray}
%&&\hspace{-6.5cm}(aq^n;q)_n=\frac{(\pm 
%\sqrt{a},\pm \sqrt{aq};q)_n}{(a;q)_n},\quad 
%a\not\in\Omega_q^n. 
%\label{poch.id:9}
%\end{eqnarray}

The basic hypergeometric series, which we 
will often use, is defined for
$q,z\in\CCast$, such that $|q|,|z|<1$, $s,r\in\N_0$, 
$b_j\not\in\Omega_q$, $j=1,...,s$, as
\cite[(1.10.1)]{Koekoeketal}
\begin{equation}
\qhyp{r}{s}{a_1,...,a_r}
{b_1,...,b_s}
{q,z}
:=\sum_{k=0}^\infty
\frac{(a_1,...,a_r;q)_k}
{(q,b_1,...,b_s;q)_k}
\left((-1)^kq^{\binom k2}\right)^{1+s-r}
z^k.
\label{2.11}
\end{equation}
\noindent Note that we refer to a basic hypergeometric
series as { $\ell$-balanced} if
$q^\ell a_1\cdots a_r=b_1\cdots b_s$, 
and { balanced} 
\boro{(Saalsch\"utzian)} if $\ell=1$.
A basic hypergeometric series ${}_{r+1}\phi_r$ is { well-poised} if 
the parameters satisfy the relations
\[
qa_1=b_1a_2=b_2a_3=\cdots=b_ra_{r+1}.
\]
\noindent It is {very-well poised} if in addition, 
$\{a_2,a_3\}=\pm q\sqrt{a_1}$.

Similarly for terminating basic hypergeometric series 
which appear in basic hypergeometric orthogonal
polynomials, one has
\begin{equation}
\qhyp{r}{s}{q^{-n},a_1,...,a_{r-1}}
{b_1,...,b_s}{q,z}:=\sum_{k=0}^n
\frac{(q^{-n},a_1,...,a_{r-1};q)_k}{(q,b_1,...,b_s;q)_k}
\left((-1)^kq^{\binom k2}\right)^{1+s-r}z^k,
\label{2.12}
\end{equation}
where $b_j\not\in\Omega_q^n$, $j=1,...,s$.
%Note that \cite[p.~15]{Koekoeketal}
%\[
%\lim_{q\uparrow 1^{-}}
%\qhyp{r}{s}{q^{a_1},...,q^{a_r}}{q^{b_1},...,q^{b_s}}{q,(q-1)^{1+s-r}z}
%\nonumber\\ &&
%=\Fhyp{r}{s}{a_1,...,a_r}{b_1,...,b_s}{z}
%:=\sum_{k=0}^\infty 
%\frac{(a_1,...,a_r)_k}{(b_1,...,b_s)_k}\frac{z^k}{k!},
%\end{eqnarray}
%\]
%where ${}_rF_s$ is the generalized hypergeometric series %\cite[Chapter 16]{NIST:DLMF}.
Define the very-well poised 
basic hypergeometric series
${}_{r+1}W_r$ \cite[(2.1.11)]{GaspRah}
\begin{equation}
\label{rpWr}
{}_{r+1}W_r(b;a_4, ..., a_{r+1};q,z)
:=\qhyp{r+1}{r}{b,\pm q\sqrt{b},a_4, ..., a_{r+1}}
{\pm \sqrt{b},\frac{qb}{a_4}, ..., \frac{qb}{a_{r+1}}}{q,z},
\end{equation} 
where $\sqrt{b},\frac{qb}{a_4}, ..., \frac{qb}{a_{r+1}}\not\in\Omega_q$. 
When the very-well poised basic hypergeometric
series is terminating, then one has
\begin{equation}
{}_{r+1}W_r\left(b;q^{-n},a_5, ..., a_{r+1};q,z\right)
=\qhyp{r+1}{r}{b,\pm q\sqrt{b},q^{-n},a_5, ..., a_{r+1}}
{\pm \sqrt{b},q^{n+1}b,\frac{qb}{a_5}, ..., \frac{qb}{a_{r+1}}}
{q,z},
\label{eq:2.13}
\end{equation}
where $\sqrt{b},\frac{qb}{a_5}, ..., \frac{qb}{a_{r+1}}\not\in\Omega_q^n\cup\{0\}$.
The Askey--Wilson polynomials are intimately connected with 
the terminating very-well poised ${}_8W_7$, which is
given by
\begin{equation}
\label{VWP87}
{}_{8}W_7(b;q^{-n}\!,c,d,e,f;q,z)
=\qhyp{8}{7}{b,\pm q\sqrt{b},q^{-n},c,d,e,f}
{\pm \sqrt{b},q^{n+1}b,\frac{qb}{c},\frac{qb}{d},\frac{qb}{e},\frac{qb}{f}}{q,z},
\end{equation}
where $\sqrt{b},\frac{qb}{c},\frac{qb}{d},\frac{qb}{e},\frac{qb}{f}\not\in\Omega_q^n\cup\{0\}$.

\medskip

%We will often use (frequently without mentioning) the 
%following limit transition 
%formulas which can be found in \cite[(1.10.3-5)]{Koekoeketal}
%\begin{eqnarray}
%\label{limit1} \lim_{\lambda \to \infty} \qhyp{r}{s}{a_1,...,a_{r-1},\lambda a_r}
%{b_1,...,b_s}{q,\frac{z}{\lambda}}=\qhyp{r-1}{s}{a_1,...,a_{r-1}}{b_1,...,b_s}{q,a_rz}, \\
%\label{limit2} \lim_{\lambda \to \infty} \qhyp{r}{s}{a_1,...,a_{r}}{b_1,...,b_{s-1},\lambda b_s}
%{q,\lambda z}=\qhyp{r}{s-1}{a_1,...,a_{r}}{b_1,...,b_{s-1}}{q,\frac{z}{b_s}},\\
%\label{limit3} \lim_{\lambda \to \infty} \qhyp{r}{s}{a_1,...,a_{r-1},\lambda a_r}
%{b_1,...,b_{s-1},\lambda b_s}{q,z}=\qhyp{r-1}{s-1}{a_1,...,a_{r-1}}{b_1,...,b_{s-1}}{q,\frac{a_r}{b_s}z}.
%\end{eqnarray}

%\noro{

{
The following notation %${}_{r+1}\phi_s^m$
$\nqphyp{r+1}{s}{m}$,  $m\in\mathbb Z$
(originally due to van de Bult \& Rains
\cite[p.~4]{vandeBultRains09}),  
for basic hypergeometric series with
zero parameter entries.
Consider $p\in\mathbb N_0$. Then define
\begin{equation}\label{topzero} 
\nlqphyp{r+1}{s}{-p}\!
%{}_{r+1}\phi_s^{-p}
\left(\!\begin{array}{c}a_1,\ldots,a_{r+1} \\
b_1,\ldots,b_s\end{array};q,z
\right)
:=
%\qhyp{r+p+1}{s}{a_1,\ldots,
%a_{r+1},\overbrace{0,...,0}^{p}}
%{b_1,\ldots,b_s}{q,z},
\rphisx{r+p+1}{s}{a_1,a_2,\ldots,a_{r+1},\overbrace{0,\ldots,0}^{p} \\ b_1,b_2,\ldots,b_s}{z},
\end{equation}
\begin{equation}\label{botzero}
\nlqphyp{r+1}{s}{\,p}\!
%{}_{r+1}\phi_s^{\,p}
\left(\!\begin{array}{c}a_1,\ldots,a_{r+1} \\
b_1,\ldots,b_s\end{array};q,z
\right)
:=
%\qhyp{r+1}{s+p}
%{a_1,\ldots,a_{r+1}}
%{b_1,\ldots,b_s,
%\underbrace{0,...,0}_{p}}{q,z},
\rphisx{r+1}{s+p}{a_1,a_2,\ldots,a_{r+1} \\ b_1,b_2,\ldots,b_s, \underbrace{0,\ldots,0}_p}{z},
\end{equation}
where $b_1,\ldots,b_s\not
\in\Omega_q\cup\{0\}$, and
$
\nlqphyp{r+1}{s}{0}
={}_{r+1}\phi_{s}
.$
The terminating basic hypergeometric series
$\nlqphyp{r+1}{s}{m}
(q^{-n},{\bf a};{\bf b};q,z)$, for some $n\in\N_0$, ${\bf a}:=\{a_1,\ldots,a_{r}\}$,
${\bf b}:=\{b_1,\ldots,b_s\}$, is well-defined for
all $r,s\in\N_0$, $m\in\Z$. 
%Note that we may move interchangeably between the
%van de Bult \& Rains notation and the alternative
%notation with vanishing numerator and denominator parameters
%which are used on the right-hand sides of \eqref{topzero} and \eqref{botzero}.
}
In \cite[Exercise 1.4ii]{GaspRah} one finds the inversion formula for
terminating basic hypergeometric series.

\begin{thm}[Gasper and Rahman{'s} (2004) {Inversion Theorem}] \label{thm:2.2}
Let $m, n, k, r, s\in\N_0$, $a_k\in\CC$,  $1\le k\le r$,
$b_m\not \in\Omega^n_q$, 
$1\le m\le s$,
$q\in\CCdag$.
Then,
\begin{eqnarray}
&&\hspace{-0.7cm}\qhyp{r+1}{s}{q^{-n},a_1,...,a_r}{b_1,...,b_s}{q,z}
{=\frac{(a_1,...,a_r;q)_n}
{(b_1,...,b_s;q)_n}\left(\frac{z}{q}\right)^n\left((-1)^n
q^{\binom{n}{2}}\right)^{s-r-1}}
%=\frac{(a_1,...,a_r;q)_n}
%{(b_1,...,b_s;q)_n}\left(\frac{z}{q}\right)^n\left((-1)^n
%q^{\binom{n}{2}}\right)^{s-r-1} \nonumber \\
%&& \hspace{5.0cm} \times \sum_{k=0}^n \frac{\left(q^{-n},\frac{q^{1-n}}{b_1},...,\frac{q^{1-n}}{b_s};q\right)_k}
%{\left(q,\frac{q^{1-n}}{a_1},...,\frac{q^{1-n}}{a_r};q\right)_k} \left(
%\frac{q^{n+1}}{z}
%\frac{b_1 \cdots b_s}{a_1 %\cdots a_r}
%\right)^k
\nonumber\\
&&\hspace{5.0cm}{\times\qphyp{s+1}{r}{s-r}
{q^{-n},\frac{q^{1-n}}{b_1},...,\frac{q^{1-n}}{b_s}}
{\frac{q^{1-n}}{a_1},...,\frac{q^{1-n}}{a_r}}
{q,
\frac{q^{n+1}}{z}
\frac{b_1 \cdots b_s}{a_1 \cdots a_r}
}}.
\label{inversion}
\end{eqnarray}
\end{thm}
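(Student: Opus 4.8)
The plan is to prove \myref{inversion} by the classical device of reversing the order of summation in the terminating series on the left-hand side. Since the factor $(q^{-n};q)_k$ forces termination at $k=n$, the left-hand side is by \myref{2.12} the finite sum
\[
\sum_{k=0}^{n}\frac{(q^{-n};q)_k\,(a_1,\ldots,a_r;q)_k}{(q;q)_k\,(b_1,\ldots,b_s;q)_k}\left((-1)^kq^{\binom{k}{2}}\right)^{s-r}z^k,
\]
in which the exponent $s-r$ is just $1+s-(r+1)$, the number of bottom parameters minus the number of top parameters. I would substitute $k\mapsto n-k$ to run the summation backwards and then re-expand each factor.

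The key tool is the elementary reversal identity for the $q$-Pochhammer symbol (Koekoek et al. \cite{Koekoeketal}),
\[
(a;q)_{n-k}=\frac{(a;q)_n}{(q^{1-n}/a;q)_k}\left(-\frac{q}{a}\right)^k q^{\binom{k}{2}-nk},
\]
together with the splitting $\binom{n-k}{2}=\binom{n}{2}+\binom{k}{2}-(n-1)k$. Applying the reversal identity to each of $(q^{-n};q)_{n-k}$, $(a_\ell;q)_{n-k}$, $(q;q)_{n-k}$ and $(b_m;q)_{n-k}$, the factors reorganize exactly as needed: $(q;q)_{n-k}$ supplies the new running denominator $(q;q)_k$ and a top parameter $q^{-n}$; each $(a_\ell;q)_{n-k}$ supplies a bottom parameter $q^{1-n}/a_\ell$; and $(q^{-n};q)_{n-k}$ together with each $(b_m;q)_{n-k}$ supply the top parameters $q^{1-n}/b_m$. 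Separating off the terms independent of the new index $k$ leaves a prefactor times a terminating series in $k$ of precisely the shape on the right-hand side.

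The heart of the argument, and its only real obstacle, is the bookkeeping of the scalar factors. Collecting everything linear in $k$ coming from the $(-q/a)^k$ and $q^{-nk}$ terms, from $z^{n-k}$, and from $\big((-1)^{n-k}q^{\binom{n-k}{2}}\big)^{s-r}$, I expect the coefficient of $k$ in the exponent of $q$ to total $n+1$, so that the combined $k$-dependent factor is exactly $\left(\frac{q^{n+1}}{z}\,\frac{b_1\cdots b_s}{a_1\cdots a_r}\right)^k$, matching the argument claimed in \myref{inversion}. The more delicate check is that the residual power of $q^{\binom{k}{2}}$ vanishes and the residual sign is $+1$: the $r+1$ contributions $+\binom{k}{2}$ from the numerator reversals, the $s+1$ contributions $-\binom{k}{2}$ from the denominator reversals, and the $(s-r)\binom{k}{2}$ coming from $\binom{n-k}{2}$ cancel as $(1+r)-(1+s)+(s-r)=0$. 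This cancellation is exactly why the right-hand side carries the van de Bult--Rains superscript $s-r$: inserting $s-r$ zero parameters via \myref{topzero}--\myref{botzero} (on the bottom if $s>r$, on the top if $s<r$) makes the internal $\big((-1)^kq^{\binom{k}{2}}\big)$-exponent of $\nlqphyp{s+1}{r}{s-r}$ equal to zero in agreement with this, while the zeros themselves contribute only $(0;q)_k=1$ and leave the Pochhammer quotient untouched.

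Finally I would simplify the $k$-independent prefactor. Writing $(q^{-n};q)_n=(-1)^nq^{-\binom{n+1}{2}}(q;q)_n$, so that $\frac{(q^{-n};q)_n}{(q;q)_n}=(-1)^nq^{-\binom{n+1}{2}}$, and combining this with the surviving $z^n$ and $\big((-1)^nq^{\binom{n}{2}}\big)^{s-r}$, the identity $\binom{n+1}{2}-\binom{n}{2}=n$ collapses the prefactor to $\frac{(a_1,\ldots,a_r;q)_n}{(b_1,\ldots,b_s;q)_n}\left(\frac{z}{q}\right)^n\big((-1)^nq^{\binom{n}{2}}\big)^{s-r-1}$, as asserted. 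A secondary point to address is the degenerate case in which some $a_\ell$ or $b_m$ vanishes, where $q^{1-n}/a_\ell$ is undefined; this I would handle either by continuity in the parameters or directly through the zero-parameter conventions of \myref{topzero}--\myref{botzero}, under which such a vanishing parameter is itself absorbed into the superscript count.
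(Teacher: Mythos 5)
Your proposal is correct and is precisely the standard argument underlying the paper's treatment of this result: the paper gives no inline proof but defers to Gasper and Rahman \cite[Exercise 1.4ii]{GaspRah}, whose intended solution is exactly your reversal of summation $k\mapsto n-k$ combined with the $q$-Pochhammer reversal identity, and all of your quantitative checks are right --- the $\binom{k}{2}$-cancellation $(1+r)-(1+s)+(s-r)=0$, the residual sign being $+1$, the $k$-linear exponent collapsing to the argument $\frac{q^{n+1}}{z}\frac{b_1\cdots b_s}{a_1\cdots a_r}$, and the prefactor reduction via $(q^{-n};q)_n/(q;q)_n=(-1)^nq^{-\binom{n+1}{2}}$ together with $\binom{n+1}{2}-\binom{n}{2}=n$. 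One prose-level slip that does not affect validity (since all factors are multiplied together anyway): reversal swaps top and bottom, so it is the top factor $(q^{-n};q)_{n-k}$ that produces the new running denominator $(q;q)_k$, the bottom factor $(q;q)_{n-k}$ that produces the new top parameter $q^{-n}$, and the new top parameters $q^{1-n}/b_m$ come solely from the $(b_m;q)_{n-k}$, not the groupings you wrote.
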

\begin{cor}\label{cor:2.4}
Let $n,r\in\N_0$, $q\in\CCdag$, 
$a_k, b_k\not\in\Omega^n_q\cup\{0\}$,
$1\le k\le r$.
Then,
\begin{eqnarray}
&&\hspace{-1cm}\qhyp{r+1}{r}{q^{-n},a_1, ..., a_r}{b_1, ..., b_r}{q,z}\nonumber\\\label{cor:2.4:r1}
&&\hspace{1cm}=
q^{-\binom{n}{2}}
\left(\!-\frac{z}{q}\right)^{\!\!n}
\frac{(a_1, ..., a_r;q)_n}
{(b_1, ..., b_r;q)_n}
\qhyp{r+1}{r}{q^{-n},
\frac{q^{1-n}}{b_1}, ..., 
\frac{q^{1-n}}{b_r}}
{\frac{q^{1-n}}{a_1}, ..., 
\frac{q^{1-n}}{a_r}}{q,
\frac{q^{n+1}}{z}\frac{b_1\cdots b_r}
{a_1\cdots a_r}}.
\end{eqnarray}
\end{cor}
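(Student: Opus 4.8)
The plan is to obtain this as the $s=r$ specialization of the Inversion Theorem (Theorem~\ref{thm:2.2}). First I would set $s=r$ throughout \myref{inversion}. On the right-hand side the generalized series $\qphyp{s+1}{r}{s-r}{\cdots}{\cdots}{q,\cdot}$ collapses to $\nlqphyp{r+1}{r}{0}$, which by the convention $\nlqphyp{r+1}{s}{0}={}_{r+1}\phi_{s}$ recorded after \myref{botzero} is an ordinary terminating ${}_{r+1}\phi_r$. This produces exactly the series on the right-hand side of \myref{cor:2.4:r1}, with lower parameters $q^{1-n}/a_k$, upper parameters $q^{-n}$ together with the $q^{1-n}/b_k$, and argument $\frac{q^{n+1}}{z}\frac{b_1\cdots b_r}{a_1\cdots a_r}$, matching the claim verbatim.

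Next I would simplify the prefactor. With $s-r=0$ the exponent $s-r-1$ equals $-1$, so $\left((-1)^nq^{\binom{n}{2}}\right)^{s-r-1}=\left((-1)^nq^{\binom{n}{2}}\right)^{-1}=(-1)^nq^{-\binom{n}{2}}$, where I use $(-1)^{-n}=(-1)^n$. Folding this together with the factor $(z/q)^n$ from \myref{inversion} yields $q^{-\binom{n}{2}}(-z/q)^n$, which is precisely the prefactor displayed in \myref{cor:2.4:r1}, while the ratio $(a_1,\dots,a_r;q)_n/(b_1,\dots,b_r;q)_n$ of $q$-Pochhammer symbols carries over unchanged. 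At this point the two sides agree as formal identities.

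The only point requiring genuine care is the domain hypotheses. Theorem~\ref{thm:2.2} assumes $b_m\not\in\Omega_q^n$, which guarantees the left-hand series is well-defined; the corollary instead imposes the symmetric hypothesis $a_k,b_k\not\in\Omega_q^n\cup\{0\}$. The conditions $a_k,b_k\neq 0$ are needed merely to form the reciprocals $q^{1-n}/a_k$ and $q^{1-n}/b_k$ on the right. The substantive check is that the transformed ${}_{r+1}\phi_r$ is itself well-defined, i.e.\ that its lower parameters obey $q^{1-n}/a_k\not\in\Omega_q^n$. Here I would invoke the elementary observation that the map $w\mapsto q^{1-n}/w$ permutes $\Omega_q^n$: writing $\Omega_q^n=\{q^{-k}:0\le k\le n-1\}=\{q^{1-n+j}:0\le j\le n-1\}$, one sees that $q^{1-n}/w=q^{-j}$ for some $0\le j\le n-1$ holds if and only if $w=q^{1-n+j}\in\Omega_q^n$, so $q^{1-n}/a_k\not\in\Omega_q^n$ is equivalent to $a_k\not\in\Omega_q^n$. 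Thus the stated hypotheses on $\{a_k\}$ are exactly what is needed to legitimize the right-hand series, and I anticipate no real obstacle beyond this bookkeeping of the index set $\Omega_q^n$ and the sign and $q$-power exponents.
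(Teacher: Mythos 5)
Your proposal is correct and follows exactly the paper's own route: the paper's proof is the one-line instruction to take $r=s$ in \myref{inversion} and invoke the series definition \myref{2.11}, which is precisely your specialization argument. Your additional bookkeeping — simplifying $\left((-1)^nq^{\binom{n}{2}}\right)^{-1}(z/q)^n$ to $q^{-\binom{n}{2}}(-z/q)^n$ and noting that $w\mapsto q^{1-n}/w$ permutes $\Omega_q^n$ so the stated hypotheses legitimize the right-hand side — merely makes explicit what the paper leaves implicit.
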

\begin{proof}
Take $r=s$, in \eqref{inversion} and using the definition \eqref{2.11}
completes the proof.
%$p=0$ in 
%\eqref{ivg1}, \eqref{ivg2}, \eqref{ivg3}, \eqref{ivg4} which all coincide
%with no zero entries, which completes
%the proof.
\end{proof}
\noindent Note that in Corollary \ref{cor:2.4}
if the terminating basic hypergeometric
series on the left-hand side is balanced
then the argument of the terminating
basic hypergeometric series on the right-hand side is~$q^2/z$.
%}

\medskip
Applying Corollary \ref{cor:2.4} to the definition
of ${}_{r+1}W_r$, we obtain the following result for a terminating very-well 
poised basic hypergeometric series ${}_{r+1}W_r$.%}
\begin{cor} \label{cor:2.5}
Let 
$n\in\N_0$, $b,a_k,q,z\in\CCast$, $\sqrt{b},q^{n+1}b,\frac{q b}{a_k},%\frac{q^{-n}}{\sqrt{b}},
\frac{q^{1-n}}{b},\frac{q^{1-n}}{a_k}\not\in\Omega_q^n$, $k=5, ..., r+1$. 
Then, one has the following transformation formula for a very-well poised terminating 
basic hypergeometric series:
\begin{eqnarray}
&&\hspace{-0.4cm}{}_{r+1}W_r\left(b;q^{-n},a_5, ..., a_{r+1};q,z\right)=
q^{-\binom{n}{2}}\left(\frac{-z}{q}\right)^n
\frac{(\pm q\sqrt{b},b,a_5, ..., a_{r+1};q)_n}
{\left(\pm \sqrt{b},q^{n+1}b,\frac{qb}{a_5}, ..., \frac{qb}{a_{r+1}};q\right)_n}\nonumber\\
&&\hspace{4.5cm}\times{}_{r+1}W_r\left(\frac{q^{-2n}}{b};q^{-n},
\frac{q^{-n}a_5}{b}, ..., \frac{q^{-n}a_{r+1}}{b};q,\frac{q^{2n+r-3}b^{r-3}}{(a_5\cdots a_{r+1})^2 z}\right).
\end{eqnarray}
\end{cor}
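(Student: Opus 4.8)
The plan is to expand the terminating ${}_{r+1}W_r$ on the left using its defining identity \eqref{eq:2.13} and then apply the inversion formula of Corollary \ref{cor:2.4} verbatim. By \eqref{eq:2.13}, the left-hand side is a terminating ${}_{r+1}\phi_r$ whose numerator parameters, apart from $q^{-n}$, are $b,\pm q\sqrt{b},a_5,\ldots,a_{r+1}$ and whose denominator parameters are $\pm\sqrt{b},q^{n+1}b,qb/a_5,\ldots,qb/a_{r+1}$. I would match these against the lists $a_1,\ldots,a_r$ and $b_1,\ldots,b_r$ in \eqref{cor:2.4:r1}, taking $\{a_1,a_2,a_3\}=\{b,q\sqrt{b},-q\sqrt{b}\}$ with the remaining $a_k$ equal to $a_5,\ldots,a_{r+1}$, and $\{b_1,b_2,b_3\}=\{\sqrt{b},-\sqrt{b},q^{n+1}b\}$ with the remaining $b_k$ equal to $qb/a_5,\ldots,qb/a_{r+1}$. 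With this identification the prefactor of \eqref{cor:2.4:r1} is at once $q^{-\binom{n}{2}}(-z/q)^n$ times the ratio $(\pm q\sqrt{b},b,a_5,\ldots,a_{r+1};q)_n/(\pm\sqrt{b},q^{n+1}b,qb/a_5,\ldots,qb/a_{r+1};q)_n$, since $(\pm q\sqrt{b};q)_n=(q\sqrt{b};q)_n(-q\sqrt{b};q)_n$ and similarly for $\pm\sqrt{b}$; this is exactly the asserted prefactor.

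The substantive step is to show that the inverted ${}_{r+1}\phi_r$ on the right of \eqref{cor:2.4:r1} is again very-well poised. The crucial observation is that inverting the base slot gives $q^{1-n}/b_3=q^{1-n}/(q^{n+1}b)=q^{-2n}/b$, so the output should be a ${}_{r+1}W_r$ with first parameter $b':=q^{-2n}/b$. Fixing the branch $\sqrt{b'}=q^{-n}/\sqrt{b}$, I would verify that the inverted numerator entries $q^{1-n}/b_1,q^{1-n}/b_2$ become $\pm q^{1-n}/\sqrt{b}=\pm q\sqrt{b'}$, that the inverted denominator entries $q^{1-n}/a_2,q^{1-n}/a_3$ become $\pm q^{-n}/\sqrt{b}=\pm\sqrt{b'}$, and that $q^{1-n}/a_1=q^{1-n}/b=q^{n+1}b'$. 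The remaining numerator entries $q^{1-n}/(qb/a_k)=q^{-n}a_k/b$ and denominator entries $q^{1-n}/a_k$ satisfy $qb'/(q^{-n}a_k/b)=q^{1-n}/a_k$, which confirms the very-well-poised pairing. Hence the right-hand ${}_{r+1}\phi_r$ collapses to ${}_{r+1}W_r(q^{-2n}/b;q^{-n},q^{-n}a_5/b,\ldots,q^{-n}a_{r+1}/b;q,\cdot)$, matching the parameters claimed.

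It then remains to simplify the argument $\tfrac{q^{n+1}}{z}\tfrac{b_1\cdots b_r}{a_1\cdots a_r}$ supplied by \eqref{cor:2.4:r1}. A short computation gives $b_1\cdots b_r=-q^{\,n+r-2}b^{\,r-1}/(a_5\cdots a_{r+1})$ and $a_1\cdots a_r=-q^{2}b^{2}(a_5\cdots a_{r+1})$, so the ratio is $q^{\,n+r-4}b^{\,r-3}/(a_5\cdots a_{r+1})^2$ and the full argument becomes $q^{\,2n+r-3}b^{\,r-3}/\big((a_5\cdots a_{r+1})^2z\big)$, as stated. Assembling the prefactor, the recognized ${}_{r+1}W_r$, and this argument completes the proof. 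I expect the only genuine obstacle to be the bookkeeping that confirms the very-well-poised structure survives inversion—specifically, choosing a consistent square-root branch so that $\pm q\sqrt{b'}$ and $\pm\sqrt{b'}$ land on the correct inverted slots; once that pairing is checked, everything else is routine simplification.
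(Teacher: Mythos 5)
Your proposal is correct and takes exactly the paper's route: the paper's proof is the one-line instruction ``Use Corollary \ref{cor:2.4} and \eqref{eq:2.13},'' which is precisely your plan of expanding the terminating ${}_{r+1}W_r$ via \eqref{eq:2.13} and applying the inversion formula \eqref{cor:2.4:r1}. Your bookkeeping---the prefactor identification, the branch choice $\sqrt{b'}=q^{-n}/\sqrt{b}$ confirming that the very-well-poised structure survives inversion, and the computation of the argument $q^{2n+r-3}b^{r-3}/\bigl((a_5\cdots a_{r+1})^2z\bigr)$---is accurate throughout and simply fills in the details the paper leaves implicit.
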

\begin{proof}
Use Corollary \ref{cor:2.4} and \eqref{eq:2.13}.
\end{proof}

An interesting and useful consequence of this formula is the $r=7$ special case,
%\begin{equation}
%\label{4W3inverse}
%{}_4W_3\left(b;q^{-n};q,z\right)=q^{-\binom{n}{2}}\left(\frac{-z}{q}\right)^n
%\frac{(\pm q\sqrt{b},b;q)_n}
%{(\pm\sqrt{b},q^{n+1}b;q)_n}
%{}_4W_3
%\left(\frac{q^{-2n}}{b};q^{-n};q,\frac{q^{2n}}{z}\right),
%\end{equation}
%%\noro{
%and the $r=7$ specialization 
\begin{eqnarray}
&&{}_8W_7\left(b;q^{-n},c,d,e,f;q,z\right)
=q^{-\binom{n}{2}} \left(\frac{-z}{q}\right)^n
\frac{\left(\pm q\sqrt{b},b,c,d,e,f;q\right)_n}
{\left(\pm\sqrt{b},q^{n+1}b,\frac{qb}{c},\frac{qb}{d},\frac{qb}{e},\frac{qb}{f};q\right)_n}\nonumber\\
&&\hspace{4.5cm}\times
{}_8W_7\left(\frac{q^{-2n}}{b};q^{-n},
\frac{q^{-n}c}{b},
\frac{q^{-n}d}{b},
\frac{q^{-n}e}{b},
\frac{q^{-n}f}{b};
q,\frac{q^{2n+4}b^4}{z(cdef)^2}
\right).
\label{inv87}
\end{eqnarray}
%\noro{[RCS: I changed a bit the next sentence.]}\\
\noindent \boro{Note that %in the case 
when one obtains an ${}_8W_7$ from a balanced}
${}_4\phi_3$ using \eqref{WatqWhipp},
then $q^{2n+4}b^4/(z(cdef)^2)=z$.

\medskip

\noindent We will obtain new transformations for basic hypergeometric orthogonal
polynomials by taking advantage of the following remark.

\begin{rem} \label{rem:2.7}
Since $x=\cos\theta$ is an even function of $\theta$, 
all polynomials in $\cos\theta$ will be invariant under the 
map $\theta\mapsto-\theta$.
\end{rem}

\begin{rem} \label{rem:2.8}
Observe in the following discussion we will often be referring
to a collection of constants $a,b,c,d,e,f$. In such cases, which will be
clear from context, then the constant $e$ should not be confused
with Euler's number $\expe$, the base of the natural logarithm, i.e.,
$\log\expe=1$. Observe the different
(roman) typography for Euler's number.
\end{rem}

\section{The Askey--Wilson polynomials}
\label{askeywilson}

Define the sets ${\bf 4}:=\{1,2,3,4\}$, 
${\bf a}:=(a_1,a_2,a_3,a_4)$, $a_k\in\CCast$, $k\in{\bf 4}$, and
$x=\cos\theta\in[-1,1]$.
{The Askey--Wilson polynomials $p_n(x;{\bf a}|q)$ are a family of polynomials symmetric 
in the four free parameters $a_1$, $a_2$, $a_3$ and $a_4$.}
These polynomials have a
long and in-depth history and their properties have been studied in detail. 
The basic hypergeometric series representation of the Askey--Wilson polynomials fall into four main categories:~(1) terminating ${}_4\phi_3$ representations; (2) terminating ${}_8W_7$ representations; (3)~nonterminating ${}_8W_7$ representations; and (4) nonterminating ${}_4\phi_3$ representations.
One may obtain the alternative 
nonterminating representations of the Askey--Wilson polynomials using 
\cite[(2.10.7)]{GaspRah}
and
\cite[17.9.16]{NIST:DLMF}.
However, these nonterminating representations will not be further 
discussed in this paper.

%These series representations are a key to finding many properties of 
%and formulas for the Askey--Wilson polynomials. 
%Different series representations are useful for obtaining different 
%properties and formulas for these polynomials. So it is very useful 
%to have at hand an exhaustive list.
%The discussion contained in this section is an attempt to summarize, 
%in an in-depth manner, an exhaustive description of the representation 
%and transformation properties of the {terminating}
%${}_4\phi_3$ and ${}_8W_7$ basic hypergeometric representations of 
%the Askey--Wilson polynomials. 
%

%Our main goal in this section is to obtain an exhaustive list of 
%terminating basic hypergeometric series representations for the 
%Askey--Wilson polynomials.
%There is a directed graph to each and every series representation 
%which is useful to understand the origin and behavior of these %representations.
%The remaining sections below which contain the terminating basic 
%hypergeometric 
%representations and transformations for the symmetric subfamilies of the 
%Askey--Wilson polynomials all follow from the results presented in this section.

\subsection{The Askey--Wilson polynomial terminating series representations}

First we present the terminating series representations of the Askey--Wilson polynomials. They are given in terms of terminating balanced ${}_4\phi_3$ and terminating  very-well-poised ${}_8W_7$ basic hypergeometric series. This result was presented in \cite[Theorem 3]{CohlCostasSantosGe}.
The symmetric structure of the mapping properties of the utilized basic hypergeometric functions which appear in this theorem are the essential ingredients for the remainder of the paper.

\begin{thm} \label{thm:3.1}
Let $n\in\N_0$, $p,s,r,t,u\in {\bf 4}$, $p,r,t,u$ distinct and fixed, 
%${\bf a}:=(a_1,a_2,a_3,a_4)$, 
%$x=\cos\theta\in[-1,1]$, 
$q\in\CCdag$.
Then, the Askey--Wilson polynomials have the following terminating basic hypergeometric series representations given by:
\begin{eqnarray}
\hspace{-0.50cm}
\label{aw:def1} p_n(x;{\bf a}|q) &&:= a_p^{-n} \left(\{a_{ps}\}_{s \neq p};q\right)_n 
\qhyp43{q^{-n},q^{n-1}a_{1234}, a_p\expe^{\pm i\theta}}{\{a_{ps}\}_{s \neq p}}{q,q} \\
\label{aw:def2} &&=q^{-\binom{n}{2}} (-a_p)^{-n} 
\frac{\left(\frac{a_{1234}}{q};q\right)_{2n}
\left(a_p \expe^{\pm i\theta};q\right)_n}
{\left(\frac{a_{1234}}{q};q\right)_n}
\qhyp43{q^{-n}, \left\{\frac{q^{1-n}}{a_{ps}}\right\}_{s \neq p}}
{\frac{q^{2-2n}}{a_{1234}}, \frac{q^{1-n}\expe^{\pm i\theta}}{a_p}}{q,q} \\
\label{aw:def3} &&=\expe^{in\theta} \left(a_{pr},a_t\expe^{-i\theta},a_u\expe^{-i\theta};q\right)_n 
\qhyp43{q^{-n}, a_p\expe^{i\theta}, a_r\expe^{i\theta}, \frac{q^{1-n}}{a_{tu}}}{a_{pr}, \frac{q^{1-n}\expe^{i\theta}}{a_t}, 
\frac{q^{1-n}\expe^{i\theta}}{a_u}}{q,q} \\
%&&=a_p^{-n} (a_{ps},a_{pt};q)_n \frac{(a_r\expe^{\pm %i\theta};q)_n}{(\frac{a_r}{a_p};q)_n} \nonumber \\ 
%\label{aw:def5} && \hspace{0.7 cm} \times \qhyp87{q^{-n}, \frac{q^{-n}a_p}{a_r}, 
%\pm q^{1-\frac{n}{2}}\left({\frac{a_p}{a_r}}\right)^\frac12\!, 
%\frac{q^{1-n}}{a_{rs}}, \frac{q^{1-n}}{a_{rt}}, a_p\expe^{\pm i\theta}}
%{\pm q^{-\frac{n}{2}}\left({\frac{a_p}{a_r}}\right)^{\frac12}\!, a_{ps}, a_{pt}, \frac{qa_p}{a_r}, 
%\frac{q^{1-n}\expe^{\pm i\theta}}{a_r}}{q,q^n a_{st}}.
&&=\expe^{in\theta}
\dfrac{\left(\frac{a_{1234}}{q};q\right)_{2n}\left(\left\{a_s\expe^{-i\theta}\right\}_{s\ne p}
,\frac{a_{1234}\,\expe^{-i\theta}}{q a_p};q\right)_{n}}
{\left(\frac{a_{1234}}{q};q\right)_{n}
\left(\frac{a_{1234}\,\expe^{-i\theta}}{q a_p};q\right)_{2n}}
\nonumber \\
&&\label{aw:def6}
\hspace{3cm}\times\,{}_8W_7\left(
\frac{q^{1-2n}a_p\expe^{i\theta}}{a_{1234}};q^{-n},
\left\{\frac{q^{1-n}a_{ps}}{a_{1234}}\right\}_{s\ne p},
a_p\expe^{i\theta};q,
\frac{q\expe^{i\theta}}{a_p}\right)\\
&&\label{aw:def7}
=\expe^{in\theta}
\dfrac{\left(a_p\expe^{-i\theta},\{\frac{a_{1234}}{a_{ps}}\}_{s\ne p};q\right)_n}
{\left(\frac{a_{1234}\,\expe^{i\theta}}{a_p};q\right)_{n}}
%}\nonumber \\&&\noro{\hspace{3cm}\times\,
{}_8W_7\left(
\frac{a_{1234}\expe^{i\theta}}{qa_p};q^{-n},
\{a_s\expe^{i\theta}\}_{s\ne p},q^{n-1}a_{1234};q,
\frac{q\expe^{-i\theta}}{a_p}\right)\\
&&\label{aw:def5}=a_p^{-n}\dfrac{\left(a_{pt},a_{pu},a_r\expe^{\pm i\theta};q\right)_n}{\left(\frac{a_r}{a_p};q\right)_n} 
\,{}_8W_7\left(
\frac{q^{-n}a_p}{a_r};q^{-n},\!
\frac{q^{1-n}}{a_{rt}}, \frac{q^{1-n}}{a_{ru}}, a_p\expe^{\pm i\theta};q,q^n a_{tu}\right)\\
%\qhyp87{q^{-n}, 
%\pm q^{1-\frac{n}{2}}
%\sqrt{\frac{a_p}{a_r}}, 
%\frac{q^{-n}a_p}{a_r}, 
%\frac{q^{1-n}}{a_{rt}}, \frac{q^{1-n}}{a_{ru}}, a_p\expe^{\pm i\theta}}
%{\pm q^{-\frac{n}{2}}
%\sqrt{\frac{a_p}{a_r}}, 
%a_{pt}, a_{pu}, \frac{qa_p}{a_r}, 
%\frac{q^{1-n}\expe^{\pm i\theta}}{a_r}}{q,q^n a_{tu}}\\
\label{aw:def4} &&=\expe^{in\theta} \frac{\left(\{a_s\expe^{-i\theta}\};q\right)_n}{\left(\expe^{-2i\theta};q\right)_n} 
\,{}_8W_7\left(
q^{-n}\expe^{2i\theta};q^{-n},
\{a_s\expe^{i\theta}\};
q,\frac{q^{2-n}}{a_{1234}}
\right).
%\qhyp87{q^{-n},
%\pm q^{1-\frac{n}{2}}\expe^{i\theta},
%q^{-n}\expe^{2i\theta}, 
%\{a_s\expe^{i\theta}\}}
%{\pm q^{-\frac{n}{2}}\expe^{i\theta}, q\expe^{2i\theta}, \left\{\frac{q^{1-n}\expe^{i\theta}}{a_s}\right\}}{q,\frac{q^{2-n}}{a_{1234}}}.
\end{eqnarray}
\label{AWthm}
\end{thm}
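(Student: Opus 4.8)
The plan is to designate \eqref{aw:def1} as the base representation, since it is the standard ${}_4\phi_3$ definition of the Askey--Wilson polynomials recorded in \cite[\S14.1]{Koekoeketal}, and then to produce each of the remaining six representations from it. Three mechanisms do all the work: the invariance of $p_n(x;{\bf a}|q)$ under interchange of $a_1,a_2,a_3,a_4$, which leaves the distinguished index $p$ (and the auxiliary indices $r,t,u$ in \eqref{aw:def3} and \eqref{aw:def5}) free to be any element of ${\bf 4}$; the invariance under $\theta\mapsto-\theta$ supplied by Remark \ref{rem:2.7}, which lets us trade $\expe^{i\theta}$ for $\expe^{-i\theta}$ at will; and a short list of classical terminating transformations applied to the series in \eqref{aw:def1}. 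As a preliminary I would record that the ${}_4\phi_3$ in \eqref{aw:def1} is balanced: its numerator parameters multiply to $q^{-1}a_{1234}a_p^2$ and its denominator parameters to $a_p^2a_{1234}$, so that $q\cdot(\text{numerator})=(\text{denominator})$. Balancedness is what keeps every transformed argument equal to $q$ (or to the simple arguments shown), and must be re-verified after each step.

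To obtain \eqref{aw:def2} I would apply Corollary \ref{cor:2.4} directly to \eqref{aw:def1} with $z=q$. Because the input is balanced, the remark following Corollary \ref{cor:2.4} guarantees the output argument is $q^2/z=q$, matching \eqref{aw:def2}; the inversion sends $\{a_{ps}\}_{s\ne p}$ into the numerator as $\{q^{1-n}/a_{ps}\}_{s\ne p}$ and sends $q^{n-1}a_{1234}$, $a_p\expe^{\pm i\theta}$ into the denominator as $q^{2-2n}/a_{1234}$, $q^{1-n}\expe^{\pm i\theta}/a_p$. The only nontrivial simplification is the prefactor: the factor $(\{a_{ps}\}_{s\ne p};q)_n$ from \eqref{aw:def1} cancels against the denominator of the ratio in \eqref{cor:2.4:r1}, leaving $(q^{n-1}a_{1234};q)_n(a_p\expe^{\pm i\theta};q)_n$, and the identity $(a_{1234}/q;q)_n(q^{n-1}a_{1234};q)_n=(a_{1234}/q;q)_{2n}$ then converts $(q^{n-1}a_{1234};q)_n$ into $(a_{1234}/q;q)_{2n}/(a_{1234}/q;q)_n$, reproducing exactly the prefactor of \eqref{aw:def2}. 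For \eqref{aw:def3} I would instead apply Sears' transformation for a terminating balanced ${}_4\phi_3$ \cite{GaspRah} (after using $\theta\mapsto-\theta$ to place $a_t\expe^{-i\theta},a_u\expe^{-i\theta}$ into the emerging prefactor), the choice of which two parameters descend into the denominator being exactly the choice that separates the retained pair $\{p,r\}$ from the inverted pair $\{t,u\}$.

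The four very-well-poised forms \eqref{aw:def6}, \eqref{aw:def7}, \eqref{aw:def5}, \eqref{aw:def4} would then be produced by running Watson's $q$-analogue of Whipple's theorem \eqref{WatqWhipp} in the ${}_4\phi_3\to{}_8W_7$ direction, each time feeding in one of the already-established balanced ${}_4\phi_3$ forms (or its $\theta\mapsto-\theta$ image) and reading off the very-well-poised base from the parameter identification; the four natural identifications yield the displayed bases $q^{1-2n}a_p\expe^{i\theta}/a_{1234}$, $a_{1234}\expe^{i\theta}/(qa_p)$, $q^{-n}a_p/a_r$, and $q^{-n}\expe^{2i\theta}$, with \eqref{aw:def4} the fully $a$-symmetric member coming from \eqref{aw:def1} itself. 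Internal consistency of these four ${}_8W_7$ forms can be confirmed with the ${}_8W_7$ inversion \eqref{inv87}: since each arises from a balanced ${}_4\phi_3$, the argument is a fixed point of \eqref{inv87} (this is the content of the remark after \eqref{inv87}, where $q^{2n+4}b^4/(z(cdef)^2)=z$), so \eqref{inv87} maps one member of the family onto another with the same argument.

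The main obstacle is not in selecting transformations---each step is one of the three identities above---but in the $q$-Pochhammer bookkeeping. The delicate points are: verifying at each stage that the denominator parameters avoid $\Omega_q^n$ so that Corollary \ref{cor:2.4}, Sears' transformation, and \eqref{WatqWhipp} actually apply; repeatedly consolidating length-$n$ products into the length-$2n$ symbols $(a_{1234}/q;q)_{2n}$ and the analogous ones in \eqref{aw:def6} and \eqref{aw:def7}; and correctly shuttling the powers $\expe^{in\theta}$, $(-a_p)^{-n}$ and $q^{-\binom{n}{2}}$ between series and prefactor under inversion and Watson. Because $p_n(x;{\bf a}|q)$ is manifestly symmetric in ${\bf a}$ and even in $\theta$, it suffices to establish one instance of each representation; the full families in $p,r,t,u$ and under $\theta\mapsto-\theta$ then follow automatically.
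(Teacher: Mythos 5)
Your overall strategy --- take \eqref{aw:def1} as the defining representation, obtain \eqref{aw:def2} by the inversion Corollary \ref{cor:2.4}, obtain \eqref{aw:def3} by Sears' transformation, and obtain the four very-well-poised forms by Watson's $q$-analog of Whipple's theorem --- is the standard route, and it is in substance what the proof cited by the paper does: the paper itself gives no self-contained argument, writing only ``See the proof of \cite[Theorem 3]{CohlCostasSantosGe}'', and it is in that reference that exactly these steps are carried out. Your bookkeeping for \eqref{aw:def2} is correct and agrees with Remark \ref{rem:3.2} (inversion of \eqref{aw:def1} gives \eqref{aw:def2}), including the consolidation $(a_{1234}/q;q)_n\,(q^{n-1}a_{1234};q)_n=(a_{1234}/q;q)_{2n}$, and your reduction of the full families in $p,r,t,u$ and $\theta\mapsto-\theta$ to a single instance via the symmetry of $p_n(x;{\bf a}|q)$ is legitimate.

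There is, however, one genuine logical defect as written: your derivation of \eqref{aw:def6}, \eqref{aw:def7}, \eqref{aw:def5}, \eqref{aw:def4} is circular inside this paper's structure. You invoke ``Watson's $q$-analogue of Whipple's theorem \eqref{cWqW:1}--\eqref{cWqW:4}'', i.e.\ Corollary \ref{WatqWhipp}, but that corollary is proved in this paper \emph{from} Theorem \ref{thm:3.1}: its proof chains back through Corollary \ref{WatqWhipp2} and Corollary \ref{cor:3.3}, both of whose proofs begin ``Start with Theorem \ref{thm:3.1}.'' The same objection applies to your consistency check via the remark following \eqref{inv87}, which is also stated conditionally on \eqref{WatqWhipp}. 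The repair is easy but necessary: appeal instead to the classical, independently established Watson transformation \cite[(17.9.15)--(17.9.16)]{NIST:DLMF} (equivalently \cite[(2.5.1)]{GaspRah}), which relates a terminating balanced ${}_4\phi_3$ to a terminating very-well-poised ${}_8W_7$ with no reference to Askey--Wilson polynomials. With that substitution, your parameter identifications do produce the stated bases (e.g.\ matching $q^{-n}ef/b$ to $a_{pr}$ yields $b=q^{-n}a_p/a_r$ and hence \eqref{aw:def5}; the identification giving $b=q^{-n}\expe^{2i\theta}$ yields \eqref{aw:def4}), and the proof goes through; without it, four of the seven representations rest on a consequence of the very theorem being proved.
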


\begin{proof}
See the proof of \cite[Theorem 3]{CohlCostasSantosGe}.
%The standard definition of the Askey--Wilson polynomials \eqref{aw:def1} is found in  many places including~\cite[(14.1.1)]{Koekoeketal}. The representation \eqref{aw:def2} can be derived by applying \eqref{inversion} to \eqref{aw:def1}. It also follows by using~(\cite[second equality in (17.9.14)]{NIST:DLMF}  with $a=a_{1234}q^{n-1}$, $\{b,c\}=a_p\expe^{\pm i\theta}$, $\{d,e,f\}=\{a_{ps}\}_{s\ne p}$. One can obtain \eqref{aw:def3} by starting with \eqref{aw:def1} and $p\leftrightarrow u$ using \cite[second equality in (17.9.14)]{NIST:DLMF}   with $\{a,b\}=a_u\expe^{\mp i\theta}$, $c=a_{1234}q^{n-1}$, $\{d,e,f\}=\{a_{us}\}_{s\ne u}$, and \cite[(1.8.14)]{Koekoeketal}. The representation \eqref{aw:def6} follows by from \eqref{aw:def7} by using the inversion formula \eqref{inv87}. The representation \eqref{aw:def7} follows from \eqref{aw:def1} with $p\leftrightarrow r$, using Watson's $q$-analogue of Whipple's theorem \eqref{WatqWhipp} and mapping $\theta\mapsto-\theta$ The representation \eqref{aw:def5} follows by using \cite[(III.19)]{GaspRah} directly with \eqref{aw:def1}. The representation \eqref{aw:def4} follows from \eqref{aw:def1}  and (\cite[(III.15), (III.19)]{GaspRah}, see also \cite[\S 14.1]{KoornwinderKLSadd}. This completes the proof.
\end{proof}

\begin{rem} \label{rem:3.2}
Please note the following symmetry properties of Theorem \ref{thm:3.1}. When inversion (Corollary \ref{cor:2.4}) is applied to \eqref{aw:def1}
one obtains \eqref{aw:def2}, and when one 
applies it to \eqref{aw:def3}, one obtains
the same formula back with $\theta\mapsto-\theta$ and $\{r,s\}\leftrightarrow\{t,u\}$.
Applying \eqref{inversion} to \eqref{aw:def3},
\eqref{aw:def6}, \eqref{aw:def7}, \eqref{aw:def4} simply takes 
$\theta \mapsto -\theta$, and applying it to \eqref{aw:def5} interchanges $a_p$ and $a_r$. 
Mapping $\theta \mapsto -\theta$ may give additional representations, however those are omitted.
\end{rem}

\subsection{Terminating 4-parameter symmetric transformations}

\begin{cor}
\label{cor:3.3}
Let $n\in\N_0$, $b,c,d,e,f\in\CCast$,
$q\in\CCdag$.
Then, one has 
the following transformation
formulas for a terminating ${}_8W_7$
to a terminating ${}_8W_7$:
\begin{eqnarray}
&&\label{aw:def4to5}\hspace{-0.44cm}{}_8W_7\left(b;q^{-n},c,d,e,f;q,\frac{q^{n+2}b^2}{cdef}\right)\\
&&\hspace{-0.21cm}\label{cor3.5a.1}=\!q^{\binom{n}{2}}\!\!
\left(\frac{-q^2b^2}{cdef}\right)^{\!n}\!\!\!\!\dfrac{\left(qb,b, c, d, e, f;q\right)_n}
{(b;q)_{2n}\!
\left(\!\frac{qb}{c},\!\frac{qb}{d},\!\frac{qb}{e},\!\frac{qb}{f};q\!\right)_{\!n}} 
{}_8W_7\!\left(\frac{q^{-2n}}{b};
q^{-n},\!\frac{q^{-n}c}{b},\!
\frac{q^{-n}d}{b},\!\frac{q^{-n}e}{b},\!\frac{q^{-n}f}{b};
q,\frac{q^{n+2}b^2}{cdef}
\right)\\
&&\hspace{-0.21cm}\label{cor3.5a.2}=\frac{\left(\frac{qb}{ce},\frac{qb}{cf},qb,d;q\right)_n}
{\left(\frac{qb}{c},\frac{qb}{e},\frac{qb}{f},\frac{d}{c};q\right)_n}
{}_8W_7\left(\frac{q^{-n}c}{d};q^{-n},\frac{q^{-n}c}{b},
\frac{qb}{de},\frac{qb}{df},c;q,\frac{ef}{b}\right)\\
&&\hspace{-0.21cm}\label{cor3.5a.7}
=\frac{\left(\frac{qb}{de},\frac{qb}{df},\frac{qb}{ef},qb;q\right)_n}
{\left(\frac{qb}{def},\frac{qb}{d},\frac{qb}{e},\frac{qb}{f};q\right)_n}
{}_8W_7\left(\frac{q^{-n-1}def}{b};q^{-n},d,e,f,\frac{q^{-n-1}cdef}{b^2};q,\frac{q}{c}\right)
\\
&&\hspace{-0.21cm}\label{cor3.5a.7b}
=\frac{\left(
\frac{q^2b^2}{cdef},qb,d,e,f;q\right)_n}
{\left(\frac{def}{qb},\frac{qb}{c},\frac{qb}{d},\frac{qb}{e},\frac{qb}{f};q\right)_n}
{}_8W_7\left(\frac{q^{1-n}b}{def};q^{-n},\frac{q^{-n}c}{b},
\frac{qb}{de},\frac{qb}{df},\frac{qb}{ef};q,\frac{q}{c}\right)
\\[-0.0cm]
&&\hspace{-0.21cm}\label{cor3.5a.6}
=\frac{\left(\frac{q^2b^2}{cdef},{qb};q\right)_n}
{\left(\frac{qb}{c},\frac{q^2b^2}{def};q\right)_n}
{}_8W_7\left(\frac{qb^2}{def};q^{-n},\frac{qb}{de},
\frac{qb}{df},\frac{qb}{ef},c;q,\frac{q^{n+1}{b}}{c}\right)
\\
&&\hspace{-0.21cm}\label{cor3.5a.7c}
=
q^{\binom{n}{2}}
\left(-\frac{qb}{c}\right)^n\frac{\left(
\frac{qb^2}{def},
\frac{qb}{ef},\frac{qb}{de},\frac{qb}{df},qb,c;q\right)_n}
{\left(\frac{qb^2}{def};q\right)_{2n}\left(\frac{qb}{c},\frac{qb}{d},\frac{qb}{e},\frac{qb}{f};q\right)_n}\nonumber\\[-0.00cm]
&&\hspace{3cm}\times{}_8W_7\left(\frac{q^{-2n-1}def}{b^2};q^{-n},\frac{q^{-n}d}{b},
\frac{q^{-n}e}{b},\frac{q^{-n}f}{b},\frac{q^{-n-1}cdef}{b^2};q,\frac{q^{n+1}b}{c}\right).
\end{eqnarray}
\end{cor}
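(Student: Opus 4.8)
The plan is to recognize all seven \({}_8W_7\) series appearing in the statement as the seven distinct very-well-poised representations of one and the same Askey--Wilson polynomial \(p_n(x;{\bf a}|q)\), and then to read off each transformation as a ratio of the corresponding prefactors. The only machinery needed is Theorem \ref{thm:3.1} together with the inversion formula \eqref{inv87}. First I would fix the parameter dictionary
\[
b=q^{-n}\expe^{2i\theta},\qquad (c,d,e,f)=(a_1,a_2,a_3,a_4)\expe^{i\theta},\qquad z=\frac{q^{2-n}}{a_{1234}},
\]
under which a direct check gives \(z=q^{n+2}b^2/(cdef)\), so the left-hand \({}_8W_7\) in \eqref{aw:def4to5} is exactly the series in representation \eqref{aw:def4}, divided by its prefactor \(\expe^{in\theta}(\{a_s\expe^{-i\theta}\};q)_n/(\expe^{-2i\theta};q)_n\). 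Generically this dictionary is a bijection \((b,c,d,e,f)\leftrightarrow(\expe^{i\theta},a_1,a_2,a_3,a_4)\) (set \(\expe^{i\theta}=q^{n/2}\sqrt b\) and recover each \(a_k\) as the corresponding ratio), so it suffices to prove each identity on the image of the dictionary; both sides being finite combinations of \(q\)-Pochhammer symbols, agreement on a set with nonempty interior forces agreement for all admissible \(b,c,d,e,f\).

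Line \eqref{cor3.5a.1} I would obtain independently of the polynomial, since it is precisely Corollary \ref{cor:2.5} with \(r=7\) (equivalently \eqref{inv87}) applied to the left-hand side. Because the argument \(q^{n+2}b^2/(cdef)\) is the balanced value singled out in the note after \eqref{inv87}, the transformed argument returns to \(q^{n+2}b^2/(cdef)\), and the stated coefficient is just the prefactor of \eqref{inv87} rewritten using \((\pm q\sqrt b;q)_n=(q^2b;q^2)_n\), \((\pm\sqrt b;q)_n=(b;q^2)_n\), and \((b;q)_{2n}=(b;q)_n(q^nb;q)_n\).

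For the remaining lines I would match each right-hand \({}_8W_7\) to another representation from Theorem \ref{thm:3.1} through the same dictionary, checking the \(b\)-parameter, the four numerator parameters, and the argument. This identifies \eqref{cor3.5a.2} as \eqref{aw:def5} with \((p,r,t,u)=(1,2,3,4)\); \eqref{cor3.5a.6} and \eqref{cor3.5a.7} as \eqref{aw:def6} and \eqref{aw:def7} with \(p=1\); and \eqref{cor3.5a.7b} and \eqref{cor3.5a.7c} as the \(\theta\mapsto-\theta\) (equivalently, inverted) companions of \eqref{aw:def6} and \eqref{aw:def7} permitted by Remark \ref{rem:3.2}. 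In each case both representations equal \(p_n(x;{\bf a}|q)\), so after cancelling the common \(\expe^{in\theta}\) the asserted transformation coefficient is exactly the ratio of the two prefactors re-expressed in \(b,c,d,e,f\).

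I expect the prefactor bookkeeping to be the main obstacle. The two prefactors are written in \(a_1,a_2,a_3,a_4,\expe^{\pm i\theta}\), and converting their ratio into the compact forms shown requires the reversal identity \((a;q)_n=(-a)^nq^{\binom n2}(q^{1-n}/a;q)_n\) (which turns a factor such as \(qb/c=q^{1-n}\expe^{i\theta}/a_1\) into \((a_1\expe^{-i\theta};q)_n\), matching \eqref{aw:def4}) together with the splitting \((a;q)_{2n}=(a;q)_n(q^na;q)_n\). One must also track that the spurious square root of \(b\) introduced by \(\expe^{i\theta}=q^{n/2}\sqrt b\) cancels in every prefactor ratio, as it must since the stated coefficients are rational in \(b,c,d,e,f\). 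Carrying this through for each of the six lines is routine, but it is the only place where genuine care is needed.
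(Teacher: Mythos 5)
Your proposal is correct and is essentially the paper's own proof: the paper establishes the corollary by substituting $\expe^{2i\theta}=q^nb$, $a_p=q^{-n/2}c/\sqrt{b}$, $a_r=q^{-n/2}d/\sqrt{b}$, $a_t=q^{-n/2}e/\sqrt{b}$, $a_u=q^{-n/2}f/\sqrt{b}$ (exactly your dictionary, inverted) into Theorem \ref{thm:3.1}, using $\theta\mapsto-\theta$ where necessary, and multiplying every representation by a common normalizing factor ${\sf A}_n(b,c,d,e,f|q)$ --- which is precisely your ``ratio of prefactors'' bookkeeping with the same identification of \eqref{cor3.5a.2} with \eqref{aw:def5}, \eqref{cor3.5a.6}/\eqref{cor3.5a.7b} with \eqref{aw:def6}, and \eqref{cor3.5a.7}/\eqref{cor3.5a.7c} with \eqref{aw:def7}. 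Your derivation of \eqref{cor3.5a.1} via the inversion formula \eqref{inv87} is an equivalent variant, since by Remark \ref{rem:3.2} inversion applied to \eqref{aw:def4} coincides with $\theta\mapsto-\theta$.
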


\begin{proof}
Start with Theorem \ref{thm:3.1} and set $\expe^{2i\theta}=q^nb$, $a_p=q^{-\frac{n}{2}}\frac{c}{\sqrt{b}}$,
$a_r=q^{-\frac{n}{2}}\frac{d}{\sqrt{b}}$,
$a_t=q^{-\frac{n}{2}}\frac{e}{\sqrt{b}}$,
$a_u=q^{-\frac{n}{2}}\frac{f}{\sqrt{b}}$
\boro{, setting $\theta\mapsto-\theta$ where
necessary.}
Then, multiply every formula by the factor
\[
{\sf A}_n(b,c,d,e,f|q):=
\frac{q^{2\binom{n}{2}}
(-1)^n(qb)^\frac{5n}{2}\left(qb;q\right)_n
}
{
(cdef)^n\left(\frac{qb}{c},\frac{qb}{d},
\frac{qb}{e},\frac{qb}{f};q\right)_n
}.
\]
\noindent With simplification, this completes the proof.
\end{proof}

The above corollary relates a 
terminating very-well-poised ${}_8W_7$ to 
six other representations of terminating
very-well-poised ${}_8W_7$s.
The following corollary which results from comparing the symmetric ${}_8W_7$ representation of the Askey--Wilson polynomials to the
${}_4\phi_3$ representations of the Askey--Wilson polynomials is directly 
connected to Watson's $q$-analog of Whipple's
theorem \cite[(17.9.15)]{NIST:DLMF}. However,
beyond the single representation which 
is usually displayed, we are able to 
extend this to a total of four 
representations of terminating
balanced ${}_4\phi_3$s.

%four different expressions for a terminating 
%balanced ${}_4\phi_3$
%and the second corollary relates a 
%terminating balanced ${}_4\phi_3$ to seven different expressions of a terminating
%very-well-poised ${}_8W_7$.

%Keep for future reference
%\begin{eqnarray}
%&&\hspace{-3.0cm}{}_8W_7\left(b;q^{-n},c,d,e,f;q,\frac{q^{n+2}b^2}{cdef}\right)\\
%&&\hspace{-0.21cm}\label{cor3.5a.3}=\frac{\left(\frac{qb}{cd},qb;q\right)_n}{\left(\frac{qb}{c},\frac{qb}{d};q\right)_n}\qhyp43{q^{-n},\frac{qb}{ef},c,d}{\frac{q^{-n}cd}{b},\frac{qb}{e},\frac{qb}{f}}{q,q}\\
%&&\hspace{-0.21cm}
%%\label{cor3.5a.4}=
%\left(\frac{qb}{ef}\right)^n
%\dfrac{\left(\frac{qb}{cd},qb,e, f; q\right)_n}{\left(\frac{qb}{c},\frac{qb}{d},\frac{qb}{e}, \frac{qb}{f};q\right)_n} %\nonumber\\&& \hspace{-13mm} \times 
%\qhyp43{q^{-n}, \frac{q^{-n}c}{b}, 
%\frac{q^{-n}d}{b}, \frac{qb}{ef}}{\frac{q^{-n}cd}b, \frac{q^{1-n}}{e}, \frac{q^{1-n}}{f}}{q,q}\\
%&&\hspace{-0.21cm}
%%\label{cor3.5a.5}
%=\frac{\left(\frac{q^2b^2}{cdef},qb,c;q\right)_n}
%{\left(\frac{qb}{d},\frac{qb}{e},\frac{qb}{f};q\right)_n}
%\qhyp43{q^{-n},\frac{qb}{cd},\frac{qb}{ce},\frac{qb}{cf}}
%{\frac{q^2b^2}{cdef},\frac{q^{1-n}}{c},\frac{qb}{c}}{q,q}\\
%&&\hspace{-0.21cm}
%%\label{cor3.5a.6b}
%=c^n\frac{\left(\frac{qb}{cd},\frac{qb}{ce},\frac{qb}{cf},qb;q\right)_n}
%{\left(\frac{qb}{c},\frac{qb}{d},\frac{qb}{e},\frac{qb}{f};q\right)_n}
%\qhyp43{q^{-n},\frac{q^{-n-1}cdef}{b^2},\frac{q^{-n}c}{b},c}{\frac{q^{-n}cd}{b},\frac{q^{-n}ce}{b},\frac{q^{-n}cf}{b}}{q,q}.
%\end{eqnarray}

\begin{cor} (Watson's $q$-analog of Whipple's theorem \cite[(17.9.15)]{NIST:DLMF}).
\label{WatqWhipp2}
Let $n\in\N_0$, $b,c,d,e,f\in\CCast$, $q\in\CCdag$.
%where $q^{1-n}abc=def$.
Then
\begin{eqnarray}
&&\hspace{-0.5cm}{\Whyp87{b}{q^{-n},c,d,e,f}{q,\frac{q^{n+2}b^2}{cdef}}=
\frac{(qb,\frac{qb}{ef};q)_n}{
(\frac{qb}{e},\frac{qb}{f};q)_n}
\qhyp43{q^{-n},\frac{qb}{cd},e,f}{{\frac{q^{-n} e f}{b}},\frac{qb}{c},\frac{qb}{d}}{q,q}}
\label{cor3.5a.3}
%\label{WatqWhippfirst}
\\
&&\hspace{4.5cm}{=
\left(\frac{qb}{cd}\right)^n
\frac{(\frac{qb}{ef},qb,c,d;q)_n}{(\frac{qb}{c},\frac{qb}{d},\frac{qb}{e},\frac{qb}{f};q)_n}
\qhyp43{q^{-n},\frac{q^{-n}e}{b},\frac{q^{-n}f}{b},\frac{qb}{cd}}
{\frac{q^{-n}ef}{b},\frac{q^{1-n}}{c},\frac{q^{1-n}}{d}}{q,q}}
\label{cor3.5a.4}
\\
&&\hspace{4.5cm}{=\frac{\left(\frac{q^2b^2}{cdef},qb,e;q\right)_n}
{\left(\frac{qb}{c},\frac{qb}{d},\frac{qb}{f};q\right)_n}
\qhyp43{q^{-n},\frac{qb}{ec},\frac{qb}{ed},\frac{qb}{ef}}
{\frac{q^2b^2}{cdef},\frac{q^{1-n}}{e},\frac{qb}{e}}{q,q}}
\label{cor3.5a.5}
\\
&&\hspace{4.5cm}{=e^n\frac{\left(\frac{qb}{ec},\frac{qb}{ed},\frac{qb}{ef},qb;q\right)_n}
{\left(\frac{qb}{c},\frac{qb}{d},\frac{qb}{e},\frac{qb}{f};q\right)_n}
\qhyp43{q^{-n},\frac{q^{-n-1}cdef}{b^2},\frac{q^{-n}e}{b},e}{\frac{q^{-n}ec}{b},\frac{q^{-n}ed}{b},\frac{q^{-n}ef}{b}}{q,q}}
\label{cor3.5a.6b}.
\end{eqnarray}
Note that the above terminating ${}_4\phi_3$s are balanced.
\end{cor}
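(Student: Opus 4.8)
The plan is to obtain all four balanced ${}_4\phi_3$ representations by exactly the specialization of Theorem~\ref{thm:3.1} already used to prove Corollary~\ref{cor:3.3}, now reading the terminating ${}_8W_7$ representation \eqref{aw:def4} against the three terminating balanced ${}_4\phi_3$ representations \eqref{aw:def1}, \eqref{aw:def2}, \eqref{aw:def3} together with the $\theta\mapsto-\theta$ reflection of \eqref{aw:def3}. Concretely, I would set $\expe^{2i\theta}=q^nb$, $a_p=q^{-n/2}c/\sqrt{b}$, $a_r=q^{-n/2}d/\sqrt{b}$, $a_t=q^{-n/2}e/\sqrt{b}$, $a_u=q^{-n/2}f/\sqrt{b}$, and multiply every resulting representation of $p_n(x;{\bf a}|q)$ by the same normalization factor $\mathsf{A}_n(b,c,d,e,f|q)$ introduced in the proof of Corollary~\ref{cor:3.3}. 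Under this substitution the image of \eqref{aw:def4} is precisely $\Whyp{8}{7}{b}{q^{-n},c,d,e,f}{q,\frac{q^{n+2}b^2}{cdef}}$, which is the left-hand side here and is the very identification already made for \eqref{aw:def4to5}; consequently every claimed equality reduces to matching the ${}_4\phi_3$ images of \eqref{aw:def1}--\eqref{aw:def3} against \eqref{cor3.5a.3}--\eqref{cor3.5a.6b}, all expressions being equal to the single normalized polynomial $\mathsf{A}_n(b,c,d,e,f|q)\,p_n(x;{\bf a}|q)$.

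The elementary substitutions $a_p\expe^{i\theta}=c$, $a_p\expe^{-i\theta}=q^{-n}c/b$, $a_{ps}=q^{-n}c\,a_s'/b$ for the relevant free parameter $a_s'\in\{d,e,f\}$, $q^{n-1}a_{1234}=q^{-n-1}cdef/b^2$, and $a_{1234}=q^{-2n}cdef/b^2$ are the same ones that produced the ${}_8W_7$ arguments in Corollary~\ref{cor:3.3}. Feeding them into the ${}_4\phi_3$ of \eqref{aw:def3} reproduces, up to the relabeling $\{c,d\}\leftrightarrow\{e,f\}$ permitted by the $a_1,\dots,a_4$-symmetry of $p_n$, the series in \eqref{cor3.5a.3} (the classical Watson $q$-Whipple form, a useful sanity check); the reflection $\theta\mapsto-\theta$ of Remark~\ref{rem:2.7} then yields \eqref{cor3.5a.4}, which by Remark~\ref{rem:3.2} is also the Corollary~\ref{cor:2.4} inversion of \eqref{cor3.5a.3}. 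Likewise \eqref{aw:def1} maps to \eqref{cor3.5a.6b}, and its inversion \eqref{aw:def2} maps to \eqref{cor3.5a.5}, again after the harmless relabeling that selects which of $c,d,e,f$ assumes the distinguished role of $a_p$. The balancedness asserted at the end is automatic: \eqref{aw:def1} and \eqref{aw:def3} are themselves balanced ${}_4\phi_3$s, and the relation $q\,a_1a_2a_3a_4=b_1b_2b_3$ defining balance is multiplicative and hence preserved verbatim under the substitution.

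I expect the prefactor simplification, rather than the series argument, to be the main obstacle. After multiplying by $\mathsf{A}_n(b,c,d,e,f|q)$ one must verify that the accumulated powers of $q$ and $q$-shifted factorials collapse to the compact coefficients displayed in \eqref{cor3.5a.3}--\eqref{cor3.5a.6b}. The delicate ingredients are the doubled factors $(a_{1234}/q;q)_{2n}$ carried by \eqref{aw:def2}, which under $a_{1234}=q^{-2n}cdef/b^2$ become $(q^{-2n-1}cdef/b^2;q)_{2n}$ and must be reduced using \eqref{poch.id:3} together with the splitting $(a;q)_{2n}=(a;q)_n(aq^n;q)_n$, and the reflection-induced factors $(\expe^{\mp i\theta};q)_n$, whose evaluation is what fixes the explicit powers such as $(qb/(cd))^n$ in \eqref{cor3.5a.4} and $e^n$ in \eqref{cor3.5a.6b}. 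Once these prefactor identities are checked, the four equalities follow at once, each side being a normalized form of the same $p_n(x;{\bf a}|q)$.
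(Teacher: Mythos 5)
Your proposal is correct and takes essentially the same route as the paper: the paper's proof is literally ``same as in the proof of Corollary~\ref{cor:3.3} except applying the transformation to the terminating balanced ${}_4\phi_3$s in Theorem~\ref{thm:3.1},'' i.e.\ the identical substitution $\expe^{2i\theta}=q^nb$, $a_p=q^{-n/2}c/\sqrt{b}$, etc., with the same multiplier $\mathsf{A}_n(b,c,d,e,f|q)$, reading the symmetric ${}_8W_7$ representation \eqref{aw:def4} against \eqref{aw:def1}--\eqref{aw:def3} and the $\theta\mapsto-\theta$ reflection. Your assignment of representations to expressions (\eqref{aw:def3} to \eqref{cor3.5a.3}, its reflection to \eqref{cor3.5a.4}, \eqref{aw:def2} to \eqref{cor3.5a.5}, \eqref{aw:def1} to \eqref{cor3.5a.6b}, up to harmless relabelings of $c,d,e,f$) agrees with the paper's own description of these mappings.
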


\begin{proof}
Same as in the proof of Corollary \ref{cor:3.3}
except applying the transformation
to the terminating balanced ${}_4\phi_3$s
in Theorem \ref{thm:3.1}.
%Start with \cite[(17.9.15)]{NIST:DLMF} and then use Corollary \ref{cor:3.3} to rewrite
%the ${}_4\phi_3$ expressions. {Also we have used relabeling of parameters such as in
%Corollary \ref{cor3.8}.} 
This completes the proof.
\end{proof}

%\begin{rem} \label{rem:17}
%\sout{Note that Corollary \ref{WatqWhipp2} is essentially a rewritten subset form of 
%Corollary \ref{cor:3.3}.}
%\end{rem}

\begin{rem} \label{rem:3.4}
%Notice that in Corollary \ref{cor:3.3}, our order
%of the representations begins with the principal ${}_8W_7$ representation 
%in which the symmetry in the parameters
%$c,d,e,f$ is evident and ends with the representation 
%corresponding to the classical 
%${}_4\phi_3$ basic hypergeometric representation of the Askey--Wilson 
%polynomials \eqref{aw:def1}. On the other hand, in Theorem \ref{AWthm},
%we have reversed the order of the corresponding 
%representations. The reason why we have used the
%ordering as such is because the ${}_4\phi_3$ representation
%of the Askey--Wilson polynomials 
%\eqref{aw:def1} is historically first (see \cite[(1.8)]{IsmailWilson82} 
%and the memoir \cite[(1.15)]{AskeyWilson85}) and is certainly the most
%common, see e.g.,~\cite[(14.1.1)]{Koekoeketal}.
The Askey--Wilson polynomials are
symmetric in their four parameters, the ${}_8W_7$ 
representation in which this symmetry is evident
demonstrates this symmetry.
On the other hand, the 
polynomial nature of the 
Askey--Wilson polynomials 
is not clearly evident
from the ${}_8W_7$ representation. 
In the first ${}_4\phi_3$ representation, 
the polynomial nature of evident.
\end{rem}

%\noindent \moro{\bf [Michael Schlosser: There is no need for writing out Corollary 17
%if it is part of Corollary 10.
%It is better to split Corollary 10 in two
%Corollaries, one involving only ${}_8W_7$ series,
%the other having ${}_4\phi_3$ series on the %right-hand-side.]\\[0.15cm]
%\noindent [HSC: Do not reproduce equations in Remark 16, 
%but explain situation. After that, then
%remove Remark 17.]
%}

%\begin{rem}
%Observe that under the standard map \eqref{stand}, the ${}_4\phi_3$ \eqref{cor3.5a.4}
%maps to the representation \eqref{aw:def3} with %$\theta\mapsto-\theta$.
%\end{rem}

\subsection{Converse for Watson's $q$-analog of Whipple's theorem}

One important transformation for terminating basic hypergeometric series 
related to the Askey--Wilson polynomials is Watson's $q$-analog of Whipple's 
theorem \cite[(17.9.15)]{NIST:DLMF}.
This result 
relates a terminating balanced ${}_4\phi_3$ to a terminating very-well 
poised ${}_8W_7$ .
The following corollary, an extension of this 
theorem, is a direct
consequence of Corollary \ref{WatqWhipp2}. Both of the following results directly relate 
a terminating balanced ${}_4\phi_3$ to
a terminating very-well-poised ${}_8W_7$. The balancing condition for the terminating
${}_4\phi_3$ is $q^{1-n}abc=def$. 

{
\begin{cor}(Converse for Watson's q-analog of Whipple's theorem).
\label{WatqWhipp}
Let $n\in\N_0$, $a,b,c,d,e,f\in\CCast$, $q\in\CCdag$, 
such that $q^{1-n}abc=def$ (balancing condition for the terminating ${}_4\phi_3$).
Then
\begin{eqnarray}
%\hspace{-1.35cm}
\qhyp43{q^{-n},a,b,c}{d,e,f}{q,q}
&=&\frac{(\frac{f}{b},\frac{f}{c};q)_n}
{(\frac{f}{bc},f;q)_n}
\Whyp87{\frac{q^{-n}bc}{f}}{q^{-n},\frac{e}{a},\frac{d}{a},
%\frac{q^{1-n}bc}{de},
b,c}{q,\frac{qa}{f}}\label{cWqW:1}\\
&=&\frac{(\frac{ef}{bc},\frac{e}{a},b,c;q)_n}
{(\frac{ef}{abc},\frac{bc}{f},e,f;q)_n}
\Whyp87{\frac{q^{-n}f}{bc}}{q^{-n},\frac{q^{1-n}}{d},
\frac{q^{1-n}}{e},%\frac{de}{abc},
\frac{f}{b},\frac{f}{c}}{q,\frac{qa}{f}}\label{cWqW:2}\\
&=&c^n\frac{(\frac{d}{c},\frac{e}{c},\frac{f}{c},b;q)_n}
{(\frac{b}{c},d,e,f;q)_n}
\Whyp87{\frac{q^{-n}c}{b}}{q^{-n},\frac{d}{b},\frac{e}{b},
\frac{f}{b},c}{q,\frac{q}{a}}\label{cWqW:3}\\
&=&\frac{(\frac{e}{a},\frac{e}{b},\frac{e}{c};q)_n}
{(\frac{de}{abc},\frac{e}{d},e;q)_n}
\Whyp87{\frac{q^{-n}d}{e}}{q^{-n},\frac{q^{1-n}}{e},\frac{d}{a},
\frac{d}{b},\frac{d}{c}}{q,q^nf}.
\label{cWqW:4}
\end{eqnarray}
\end{cor}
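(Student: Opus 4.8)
The plan is to read Corollary \ref{WatqWhipp2} backwards. Watson's identity already expresses the master ${}_8W_7$ as an explicit prefactor times each of four balanced ${}_4\phi_3$s, so no fresh appeal to the inversion theorem (Theorem \ref{thm:2.2}) is needed: solving each of the four identities \eqref{cor3.5a.3}, \eqref{cor3.5a.4}, \eqref{cor3.5a.5}, \eqref{cor3.5a.6b} for its ${}_4\phi_3$ is a purely algebraic rearrangement. The only real work is to relabel the five free parameters of Corollary \ref{WatqWhipp2}, say $\hat b,\hat c,\hat d,\hat e,\hat f$, as functions of the six parameters $a,b,c,d,e,f$ of the target balanced ${}_4\phi_3$ so that the ${}_4\phi_3$ appearing on the right of the $i$-th line of Corollary \ref{WatqWhipp2} becomes exactly $\qhyp43{q^{-n},a,b,c}{d,e,f}{q,q}$.

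For the first line \eqref{cor3.5a.3} I would take $\hat e=b$, $\hat f=c$, $\hat b=q^{-n}bc/f$, $\hat c=e/a$, $\hat d=d/a$. First I would verify the parameter match: the upper list $\{q\hat b/(\hat c\hat d),\hat e,\hat f\}$ collapses to $\{a,b,c\}$ and the lower list $\{q^{-n}\hat e\hat f/\hat b,q\hat b/\hat c,q\hat b/\hat d\}$ to $\{d,e,f\}$, both using the balancing condition $q^{1-n}abc=def$ in the form $q^{1-n}bc=def/a$. The same condition forces the master argument $q^{n+2}\hat b^2/(\hat c\hat d\hat e\hat f)$ to collapse to $qa/f$, producing precisely the ${}_8W_7$ of \eqref{cWqW:1}. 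Dividing through by the prefactor $(q\hat b,q\hat b/(\hat e\hat f);q)_n/(q\hat b/\hat e,q\hat b/\hat f;q)_n$ and rewriting each factor with the reflection formula $(\alpha;q)_n=(-\alpha)^n q^{\binom n2}(q^{1-n}/\alpha;q)_n$ (one of the $q$-Pochhammer identities of Koekoek et al.\ \cite{Koekoeketal} quoted after \eqref{poch.id:3}) makes the powers of $-1$ and $q^{\binom n2}$ cancel in pairs, leaving the stated coefficient $(f/b,f/c;q)_n/(f/bc,f;q)_n$.

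The remaining three identities \eqref{cWqW:2}, \eqref{cWqW:3} and \eqref{cWqW:4} I would obtain in the same way from the other three ${}_4\phi_3$ representations \eqref{cor3.5a.4}, \eqref{cor3.5a.5} and \eqref{cor3.5a.6b}; each furnishes a different admissible substitution $\hat b,\dots,\hat f$, hence a genuinely different ${}_8W_7$, reflected in the distinct base parameters $q^{-n}f/(bc)$, $q^{-n}c/b$, $q^{-n}d/e$ and arguments $qa/f$, $q/a$, $q^nf$ (the base of the first line being $q^{-n}bc/f$). In every case the balancing condition is what simultaneously makes the substitution consistent and collapses the ${}_8W_7$ argument to the simple displayed value.

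I expect the main obstacle to be organizational rather than conceptual: solving each matching system for $\hat b,\dots,\hat f$ correctly and, more delicately, confirming that the master argument really reduces to $qa/f$, $q/a$, or $q^nf$ under $q^{1-n}abc=def$, a step where a single misplaced power of $q$ would slip by unnoticed. The prefactor simplifications are then routine, since the sign and $q$-binomial factors are engineered to cancel once the reflection formula is applied.
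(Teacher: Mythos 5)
Your proposal is correct in substance and its worked-out first line checks exactly: the substitution $\hat e=b$, $\hat f=c$, $\hat b=q^{-n}bc/f$, $\hat c=e/a$, $\hat d=d/a$ does turn the ${}_4\phi_3$ of \eqref{cor3.5a.3} into $\qhyp43{q^{-n},a,b,c}{d,e,f}{q,q}$, collapses the master argument to $qa/f$ via the balancing condition, and the prefactor reduces to $(\frac{f}{b},\frac{f}{c};q)_n/(\frac{f}{bc},f;q)_n$ after the reflection identity, with the signs and powers of $q^{\binom n2}$ cancelling as you predict. However, your route is genuinely different from the paper's. The paper anchors everything to the \emph{single} bridge \eqref{cor3.5a.3}: it solves one parameter-matching system identifying that ${}_4\phi_3$ with the generic balanced one, pushes that one substitution through the seven ${}_8W_7$-to-${}_8W_7$ transformations \eqref{aw:def4to5}--\eqref{cor3.5a.7c} of Corollary \ref{cor:3.3}, and then invokes the balancing condition to merge the resulting expressions that coincide, leaving the four displayed. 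You never touch Corollary \ref{cor:3.3}; instead you invert each of the four lines of Corollary \ref{WatqWhipp2} separately, with a tailored substitution per line. Your route buys a cleaner endgame (no duplicate-merging step, and a manifest one-to-one correspondence between the four ${}_4\phi_3$ equivalence classes of Watson's theorem and the four ${}_8W_7$ forms of its converse); the paper's route buys a single matching system in place of your four, at the cost of recognizing equivalent ${}_8W_7$ expressions afterwards.

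One concrete bookkeeping error: your assignment of sources to targets for the last two identities is swapped. Inverting \eqref{cor3.5a.5} yields \eqref{cWqW:4}, not \eqref{cWqW:3}: taking $\hat b=q^{-n}d/e$, $\hat e=q^{1-n}/e$, $\{\hat c,\hat d,\hat f\}=\{d/a,d/b,d/c\}$, the numerator entries $q\hat b/(\hat e\hat c)$, $q\hat b/(\hat e\hat d)$, $q\hat b/(\hat e\hat f)$ become $a,b,c$, the denominator entries $q\hat b/\hat e$, $q^{1-n}/\hat e$, $q^2\hat b^2/(\hat c\hat d\hat e\hat f)$ become $d$, $e$, $f$ (the last via balancing), and the argument becomes $q^nf$. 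Likewise \eqref{cor3.5a.6b} yields \eqref{cWqW:3} (take $\hat b=q^{-n}c/b$, $\hat e=c$, $\{\hat c,\hat d,\hat f\}=\{d/b,e/b,f/b\}$, argument $q/a$). Had you attempted your stated pairing \eqref{cor3.5a.5}$\to$\eqref{cWqW:3}, the matching system would have no solution: with $\hat b=q^{-n}c/b$ and $\{\hat c,\hat d,\hat e,\hat f\}$ a permutation of $\{d/b,e/b,f/b,c\}$, the denominator entry $q^{1-n}/\hat e$ of \eqref{cor3.5a.5} equals $def/(abc^2)$, $ef/(ac)$, $df/(ac)$ or $de/(ac)$, none of which lies in $\{d,e,f\}$ for generic parameters. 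The fix is simply to exchange these two pairings; nothing else in your argument changes.
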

}
{
\begin{proof}
Consider \eqref{cor3.5a.3}, then solving the
following set of algebraic equations
\begin{equation}
\left(A,B,C,D,E,\frac{q^{1-n}BC}{DE}\right)
=\left(\frac{qb}{ef},c,d,\frac{q^{-n}cd}{b},\frac{qb}{e},\frac{qb}{f}\right),
\end{equation}
gives the solution
\begin{equation}
(b,c,d,e,f)=\left(\frac{q^{-n}BC}{D},B,C,\frac{q^{1-n}BC}{DE},\frac{F}{A}\right).
\end{equation}
Now make these replacements in 
\eqref{aw:def4to5}--\eqref{cor3.5a.7c}, and solving
for the ${}_4\phi_3$ in \eqref{cor3.5a.3},
while replacing $(A,B,C,D,E,F)\mapsto(a,b,c,d,e,f)$,
and utilizing the balancing condition 
$q^{1-n}abc=def$.
Note that one can write \eqref{cWqW:1}, \eqref{cWqW:2} as
equivalent expressions using the balancing condition as follows
\begin{eqnarray}
&&\hspace{-0.35cm}\qhyp43{q^{-n},a,b,c}{d,e,f}{q,q}=
\frac{\left(\frac{de}{ab},\frac{de}{ac};q\right)_{n}}
{\left(\frac{de}{a},\frac{de}{abc};q\right)_{n}}
{}_8W_7\left(\frac{de}{qa};q^{-n},\frac{d}{a},\frac{e}{a},b,c;q,\frac{qa}{f}\right)\\
&&\hspace{3.4cm}=q^{\binom{n}{2}}\left(\frac{-de}{bc}\right)^n\!\!
\frac{(\frac{de}{qa},\frac{d}{a},\frac{e}{a},b,c;q)_n}
{(\frac{de}{qa};q)_{2n}(\frac{de}{abc},e,d;q)_n}\nonumber\\
&&
\hspace{4.4cm}\times\Whyp87{\frac{q^{1-2n}a}{de}}{q^{-n},\frac{q^{1-n}}{d},\frac{q^{1-n}}{e},
\frac{q^{1-n}ab}{de},\frac{q^{1-n}ac}{de}}{q,\frac{qa}{f}},
\end{eqnarray}
which reduces the number of inequivalent expressions by two.
This completes the proof.
\end{proof}
}

\medskip

\section{{The symmetric structure of  terminating representations of the Askey--Wilson polynomials}}

{
In this section we describe the
symmetric nature of the equivalence classes of
expressions for the terminating basic hypergeometric representations which 
correspond to the 
Askey--Wilson polynomials.}

{
Consider the 11 equivalence classes of
terminating ${}_4\phi_3$ and ${}_8W_7$
expressions in Corollaries~\ref{cor:3.3}-\ref{WatqWhipp2},
namely \eqref{aw:def4to5}--\eqref{cor3.5a.6b}.
There are four equivalence classes of 
balanced terminating ${}_4\phi_3$ expressions 
\eqref{cor3.5a.3}--\eqref{cor3.5a.6b}
and 7 equivalence classes of very-well-poised terminating
${}_8W_7$ expressions 
\eqref{aw:def4to5}--\eqref{cor3.5a.7c}.
Equivalent expressions 
within an equivalence class are obtained by
compositions of the trivial interchange of positions for numerator and/or denominator parameters in the basic hypergeometric 
series and under the 4!=24 permutations of the symmetric parameter 
$c,d,e,f$ labeling.}

{
The above described 11 equivalence classes 
in Corollaries \ref{cor:3.3}-\ref{WatqWhipp2} correspond to a 
total of 7 equivalence classes of terminating
basic hypergeometric
series representations of the Askey--Wilson
polynomials. These are represented
by 3 ${}_4\phi_3$ equivalence classes
and 4 ${}_8W_7$ equivalence classes
which are given in 
Theorem~\ref{thm:3.1}. Note that each of these
equivalence classes are equivalent under 
the map $\theta\mapsto-\theta$.
}

{
In this section we describe the symmetric
nature of these equivalence classes under
the mapping of inversion \eqref{inversion}
and that due to a theorem due to 
Van der Jeugt and Rao \cite{VanderJeugtRao1999}
which provides the symmetry group of 
nonterminating very well poised ${}_8W_7$
basic hypergeometric functions, namely
Theorem \ref{VanderJeugt43} below.
The symmetry groups of several
relevant basic
hypergeometric functions have been studied 
in the literature
\cite{Kajihara2014,KrattenthalerRao2004,Lievensetal2007,
VanderJeugtRao1999}.
For terminating balanced ${}_4\phi_3$ expressions, the following surprisingly simple result has
been established in
\cite[Proposition 2]{VanderJeugtRao1999}.
}

{
\begin{thm}[Van der Jeugt and Rao (1999)]
Let $n\in\N_0$, 
$q\in\CCdag$,
%$q\in\CC^\ast$, such that $|q|\ne 1$,
${\bf x}:=\{x_1,x_2,x_3,x_4,x_5,x_6\}$, $x_k\in\CCast$, $k\in
%{\bf 6}:=
\{1,2,3,4,5,6\}$, 
be six parameters satisfying
$x_{123456}=q^{1-n}$,
with 
$f:\CCast^6\times\CCdag\to\CC$ defined by
\begin{equation}
f({\bf x};q):=
q^{\binom{n}{2}}
\frac{(x_{1234},x_{1235},x_{1236};q)_n}
{x_{123}^n}
\qhyp43{q^{-n},x_{23},x_{13},x_{12}}
{x_{1234},x_{1235},x_{1236}}{q,q}.
\end{equation}
Then $f({\bf x})$ is symmetric in the variables $x_k$.
\label{VanderJeugt43}
\end{thm}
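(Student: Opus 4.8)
The plan is to exploit the fact that the symmetric group $S_6$ is generated by the five adjacent transpositions $(12),(23),(34),(45),(56)$, and to verify invariance of $f$ under each. First I would record the two \emph{manifest} block symmetries. Since the numerator parameters $x_{23},x_{13},x_{12}$ of the ${}_4\phi_3$ are exactly the pairwise products drawn from $\{x_1,x_2,x_3\}$, while $q^{-n}$, the product $x_{123}$, and hence the three lower parameters $x_{1234},x_{1235},x_{1236}$ are unchanged by any permutation of $\{1,2,3\}$, the series and the whole prefactor are invariant under $S_{\{1,2,3\}}$; this covers $(12)$ and $(23)$. Symmetrically, a permutation of $\{4,5,6\}$ fixes every numerator parameter and merely permutes $x_{1234},x_{1235},x_{1236}$ among themselves, which leaves both the ${}_4\phi_3$ and the symmetric factor $(x_{1234},x_{1235},x_{1236};q)_n$ unchanged; this covers $(45)$ and $(56)$. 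The balancing constraint $x_{123456}=q^{1-n}$ is itself symmetric, so it is preserved throughout.

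It then remains only to verify invariance under the single mixing transposition $(34)$, i.e.\ under $x_3\leftrightarrow x_4$, and this is where the real content lies. I would read off that $x_3\leftrightarrow x_4$ sends the ${}_4\phi_3$ to $\qhyp43{q^{-n},x_{12},x_{14},x_{24}}{x_{1234},x_{1245},x_{1246}}{q,q}$ and the prefactor to $q^{\binom n2}(x_{1234},x_{1245},x_{1246};q)_n/x_{124}^n$. The key step is to identify the equality of the two resulting expressions for $f$ with Sears' transformation for a terminating balanced ${}_4\phi_3$ \cite{GaspRah}. Concretely, applying Sears in the form that keeps the upper parameter $x_{12}$ and the lower parameter $x_{1234}$ fixed converts $\qhyp43{q^{-n},x_{12},x_{13},x_{23}}{x_{1234},x_{1235},x_{1236}}{q,q}$ into precisely the $(34)$-image series, up to the Pochhammer factor $x_{12}^n(x_{35},x_{36};q)_n/(x_{1235},x_{1236};q)_n$; here the transformed parameters are simplified using the balancing identity, e.g.\ $x_{1234}/x_{13}=x_{24}$ and $x_{12}q^{1-n}/x_{1235}=x_{12}x_{123456}/x_{1235}=x_{1246}$.

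The last step is to reconcile the Pochhammer factors, which I expect to be the main obstacle in the sense that it is the one genuinely computational point. After cancelling the common $q^{\binom n2}(x_{1234};q)_n$, invariance under $(34)$ reduces to the identity $(x_{35},x_{36};q)_n\,x_3^{-n}=(x_{1245},x_{1246};q)_n\,x_{124}^{-n}$. I would establish this by applying the reversal identity $(a;q)_n=(-a)^nq^{\binom n2}(q^{1-n}/a;q)_n$ to the left-hand factors and using $q^{1-n}=x_{123456}$, which turns $(x_{35};q)_n$ into $(x_{1246};q)_n$ and $(x_{36};q)_n$ into $(x_{1245};q)_n$ while producing the monomial and $q^{\binom n2}$ factors; collecting these, the leftover constant is $(x_{123456})^nq^{2\binom n2}=(q^{1-n})^nq^{n(n-1)}=1$, which closes the identity. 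Finally, since $S_{\{1,2,3\}}$, $S_{\{4,5,6\}}$ and $(34)$ together generate all of $S_6$, invariance under these generators yields invariance under every permutation of $x_1,\dots,x_6$, proving the theorem.
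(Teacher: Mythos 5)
Your proof is correct. Note that the paper itself gives no proof of this theorem: it is quoted verbatim from \cite[Proposition 2]{VanderJeugtRao1999}, so there is no internal argument to compare against. Your argument is, however, essentially the standard one (and the one underlying the cited source): the $S_{\{1,2,3\}}\times S_{\{4,5,6\}}$ block symmetry is manifest from the symmetry of a ${}_4\phi_3$ in its numerator and in its denominator parameters, and the only genuine content is the mixing transposition $x_3\leftrightarrow x_4$, which you correctly identify with Sears' transformation \cite[(17.9.14)]{NIST:DLMF} applied so as to fix the upper parameter $x_{12}$ and the lower parameter $x_{1234}$. Your bookkeeping checks out: under the balancing condition $x_{123456}=q^{1-n}$ the Sears parameters simplify as claimed ($x_{1234}/x_{13}=x_{24}$, $x_{12}q^{1-n}/x_{1235}=x_{1246}$, etc.), and the residual Pochhammer identity $(x_{35},x_{36};q)_n x_3^{-n}=(x_{1245},x_{1246};q)_n x_{124}^{-n}$ follows from the reversal formula $(a;q)_n=(-a)^n q^{\binom n2}(q^{1-n}/a;q)_n$ together with $(x_{123456})^n q^{2\binom n2}=(q^{1-n})^n q^{n(n-1)}=1$. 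Since $(12),(23),(34),(45),(56)$ generate $S_6$, the theorem follows.
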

}

{
From Van der Jeugt and Rao's (1999) result, it is clear that the symmetry
group of the terminating balanced 
${}_4\phi_3$ is $S_6$, the symmetric group
of degree six, $|S_6|=720$. This was originally
established by \cite{Beyeretal1987}, although the 720 transformations were explicitly
written out by Bailey
\cite[Chapter VII]{Bailey64}.
Upon examination of the 
terminating balanced ${}_4\phi_3$ expressions
in Corollary \ref{WatqWhipp2}, we see that there
are four equivalence classes of basic
hypergeometric representations for these
expressions \eqref{cor3.5a.3}--\eqref{cor3.5a.6b}}. 

\begin{rem}
The Van der Jeugt and Rao (1999) result \cite[Proposition 2]{VanderJeugtRao1999} clearly
indicates that the symmetry group structure of the terminating balanced ${}_4\phi_3$ is $S_6$, {which has order equal to} 720. It is interesting to make comparison
of this result with the four terminating balanced expressions in Corollary \ref{WatqWhipp2}, namely 
\eqref{cor3.5a.3}--\eqref{cor3.5a.6b}. 
\end{rem}

%We now analyze
%the number of possibilities that one 
%has within these expressions.  

{
\begin{prop}
\label{firstS6}
The number of allowed permutations 
and rearrangements of the terminating
balanced ${}_4\phi_3$s
\eqref{cor3.5a.3}--\eqref{cor3.5a.6b} in
Corollary \ref{WatqWhipp2} is $|S_6|=720$ (where $|\cdot|$ represents the cardinality).
\label{VDJ4phi3}
\end{prop}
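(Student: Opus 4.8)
The plan is to connect the four terminating balanced ${}_4\phi_3$ expressions \eqref{cor3.5a.3}--\eqref{cor3.5a.6b} in Corollary \ref{WatqWhipp2} to the six-parameter symmetric function $f({\bf x};q)$ of Theorem \ref{VanderJeugt43}, and then count orbits under the $S_6$ action. First I would recall that Van der Jeugt and Rao's result tells us that a single terminating balanced ${}_4\phi_3$, written in the symmetric form $f({\bf x};q)$ with the six parameters $x_1,\ldots,x_6$ constrained by $x_{123456}=q^{1-n}$, is invariant under all $720$ permutations of the $x_k$. The essential observation is that each ${}_4\phi_3$ appearing in our corollary is, after the substitution defining the standard parameters $b,c,d,e,f$, one particular reparametrization of this same symmetric object. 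So the plan is to exhibit, for the expression \eqref{cor3.5a.3}, an explicit assignment of $\{x_1,\ldots,x_6\}$ in terms of $b,c,d,e,f$ for which $f({\bf x};q)$ equals the displayed ${}_4\phi_3$ up to the prefactor, verifying that the balancing condition $x_{123456}=q^{1-n}$ is met (this is exactly the Saalsch\"utzian/balanced condition, which holds since the series in Corollary \ref{WatqWhipp2} are noted to be balanced).

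Having fixed one such identification, the next step is to describe how the $720$ permutations of the $x_k$ redistribute among the visible parametric forms. Since $f$ depends on the six parameters only through the three products $x_{12},x_{13},x_{23}$ appearing in the numerator and the three products $x_{1234},x_{1235},x_{1236}$ appearing in the denominator, I would partition the $x_k$ into the ``top triple'' $\{x_1,x_2,x_3\}$ and the ``bottom triple'' $\{x_4,x_5,x_6\}$. Permutations internal to each triple leave the displayed ${}_4\phi_3$ literally unchanged (these are precisely the trivial interchanges of numerator and denominator parameter positions mentioned in the section introduction), so they account for $3!\cdot 3! = 36$ symmetries per visible form. The remaining structure comes from how the two triples can be chosen from the six parameters: the number of ways to split six labeled parameters into an unordered pair of triples, $\binom{6}{3}/1 = 20$, but here the top and bottom roles are distinguished, giving $\binom{6}{3}=20$ ordered choices of the top triple. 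I would then match these $20$ ``coset representatives'' of $S_6/(S_3\times S_3)$ against the four equivalence classes \eqref{cor3.5a.3}--\eqref{cor3.5a.6b}, confirming that together with the internal $36$ they exhaust $20\cdot 36 = 720 = |S_6|$.

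The counting assertion of the proposition is then immediate: the four displayed ${}_4\phi_3$ forms, taken together with all trivial rearrangements and the $4! = 24$ permutations of the symmetric labels $c,d,e,f$, realize precisely the $720$ transformations of $S_6$. Concretely I would verify that the $24$ permutations of $\{c,d,e,f\}$ acting on a single form, combined with passage among the four equivalence classes, generate exactly the coset action described above, so that no transformation is counted twice and none is omitted. The cleanest way to phrase the conclusion is that the stabilizer in $S_6$ of any one displayed ${}_4\phi_3$ (as an abstract series, before rearrangement) has order $720/N$ where $N$ is the size of its orbit of genuinely distinct parametric forms; summing over the four equivalence classes must reproduce the full group, giving $|S_6|=720$.

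The main obstacle will be the bookkeeping in the second step: one must be careful that the substitution from $(b,c,d,e,f)$ to $(x_1,\ldots,x_6)$ is consistent across all four expressions \eqref{cor3.5a.3}--\eqref{cor3.5a.6b}, i.e.\ that the four forms really are the same symmetric $f({\bf x};q)$ under four different choices of which three $x_k$ sit in the top triple, rather than genuinely inequivalent functions. The prefactors differ among the four expressions, so I would track the $q$-Pochhammer multipliers carefully and use the inversion relation of Corollary \ref{cor:2.4} together with the balancing condition to confirm that the prefactor discrepancies are exactly those produced by the Van der Jeugt--Rao normalization $q^{\binom{n}{2}}(x_{1234},x_{1235},x_{1236};q)_n/x_{123}^n$ under the relevant permutation. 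Once this consistency is established, the permutation count $720$ follows directly from the order of $S_6$.
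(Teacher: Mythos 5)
Your proposal is correct in substance, but it reaches the count by a different organization than the paper. The paper's proof is a bottom-up enumeration that never invokes Theorem \ref{VanderJeugt43}: for each of \eqref{cor3.5a.3}, \eqref{cor3.5a.4} it counts the $6$ choices of distinguished pair from $\{c,d,e,f\}$ times $6$ numerator positionings times $6$ denominator positionings, giving $216$ apiece; for each of \eqref{cor3.5a.5}, \eqref{cor3.5a.6b} it counts the $4$ choices of distinguished variable times $6\times 6$ positionings, giving $144$ apiece; and it observes that $216+216+144+144=720=|S_6|$. Your argument produces exactly the same numbers top-down: you embed each balanced ${}_4\phi_3$ into the symmetric function $f({\bf x};q)$, identify the trivial rearrangements with the stabilizer $S_3\times S_3$ of the ordered (top triple, bottom triple) decomposition, and write $|S_6|=[S_6:S_3\times S_3]\cdot|S_3\times S_3|=20\cdot 36$; the paper's per-class counts are then the coset multiplicities $6+6+4+4=20$ times $36$, and the inversion pairings \eqref{cor3.5a.3}$\leftrightarrow$\eqref{cor3.5a.4}, \eqref{cor3.5a.5}$\leftrightarrow$\eqref{cor3.5a.6b} become the transparent operation of swapping the top and bottom triples. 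What your route buys is an a priori explanation of why the total must be $|S_6|$ (rather than $720$ emerging as a happy sum), at the cost of the bookkeeping you yourself flag: one must actually verify that all four displayed forms are reparametrizations of a single $f({\bf x};q)$, that the prefactors match the Van der Jeugt--Rao normalization under each coset representative, and that the $20$ cosets distribute as $6+6+4+4$ with no overlap --- which is precisely the content the paper establishes directly by its elementary count. One small slip worth fixing in your write-up: the number of \emph{unordered} splittings into two triples is $\binom{6}{3}/2=10$, not $\binom{6}{3}/1$; your subsequent statement that the top and bottom roles are distinguished, giving $\binom{6}{3}=20$ ordered choices, is the correct one and is what your count actually uses.
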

\begin{proof}
There are 6 possible variable pair product combinations 
$(cd, ce, cf, de, df, ef)$.
In what proceeds, we ignore the positioning of the numerator factor $q^{-n}$.
For \eqref{cor3.5a.3}, \eqref{cor3.5a.4} there are 6 possible numerator positionings for each pair combination and 6 possible denominator positionings for each pair combination, so $|\eqref{cor3.5a.3}|=6^3=216$. Therefore
$|{\eqref{cor3.5a.3}, \eqref{cor3.5a.4}}|=432$. For \eqref{cor3.5a.5}, \eqref{cor3.5a.6b}, there are four variables, $(c,d,e,f)$ and again 6 possible numerator positionings and 6 possible denominator positionings, so 
$|\eqref{cor3.5a.5}|=6\times 6\times 4=144$.
Since \eqref{cor3.5a.6b} is the inversion of
\eqref{cor3.5a.5}, the counting is the same.
Hence, $|\eqref{cor3.5a.5}, \eqref{cor3.5a.6b}|=288$. Finally we have  $|\eqref{cor3.5a.3}, \eqref{cor3.5a.4},
\eqref{cor3.5a.5},
\eqref{cor3.5a.6b}|=432+288=720=|S_6|.$
\end{proof}
}

{
\begin{rem}
There is no symmetry analysis
for a terminating ${}_8W_7$ which 
corresponds to the Van der Jeugt and Rao (1999)
result for a terminating balanced ${}_4\phi_3$.
They do however have a symmetry proposition
for a nonterminating ${}_8W_7$, namely 
\cite[Proposition 5]{VanderJeugtRao1999}, see
Theorem \ref{VanderJeugt87} below. It is 
important to note that the nonterminating 
${}_8W_7$ does not possess Gasper and Rahman's inversion
symmetry, Theorem \ref{thm:2.2}, and there is
no nonterminating analog of this symmetry,
so the group structure of the terminating
${}_8W_7$ is not necessarily clear. On the
other hand, one has the Watson $q$-analog of
Whipple's theorem 
\cite[(17.9.16)]{NIST:DLMF} which relates
a terminating balanced ${}_4\phi_3$
to a terminating very-well-poised ${}_8W_7$,
so one expects there to be a one-to-one
relation between these functions. 
\end{rem}
}

\noindent We now
prove this result.

{
\begin{prop}
\label{secS6}
The number of allowed permutations 
and rearrangements of the terminating
balanced ${}_8W_7$s
\eqref{aw:def4to5}--\eqref{cor3.5a.7c} in
Corollary \ref{cor:3.3} is $|S_6|=720$.
\label{VDJ8W7}
\end{prop}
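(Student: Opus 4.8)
The plan is to mirror the counting argument of Proposition~\ref{firstS6}, but now applied to the seven equivalence classes of terminating ${}_8W_7$s in \eqref{aw:def4to5}--\eqref{cor3.5a.7c}, and to show that the total number of allowed permutations and rearrangements again sums to $720 = |S_6|$. The key observation driving the count is that each ${}_8W_7$ is determined by its symmetric parameters $c,d,e,f$ together with the structure of its very-well-poised tail; the genuine freedom therefore lies in how we distribute these parameters (and their pairwise products, where they occur) among the numerator and denominator slots, exactly as in the ${}_4\phi_3$ case.

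First I would partition the seven classes according to how the count is generated. The ${}_8W_7$s come in two flavors: those whose defining parameters are individual variables drawn from $\{c,d,e,f\}$, and those whose parameters involve pairwise products (of the type $qb/(de)$, etc.). For the single-variable classes, the count is driven by the $4! = 24$ permutations of the labeling $c,d,e,f$, combined with the trivial rearrangements of numerator and denominator entries in the ${}_8W_7$; for the product-type classes one instead counts positionings of the six pairwise products $(cd,ce,cf,de,df,ef)$, just as in Proposition~\ref{firstS6}. The plan is to tabulate, class by class, the product of (i) the number of distinct parameter labelings and (ii) the number of trivial positionings that leave the ${}_8W_7$ formally invariant, ignoring throughout the placement of the fixed entries $b$, $\pm q\sqrt{b}$, $\pm\sqrt{b}$, $q^{-n}$, $q^{n+1}b$ which are forced by the very-well-poised structure.

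Second, I would exploit the inversion symmetry established in Corollary~\ref{cor:3.3}: several of the seven classes are related in pairs by the inversion \eqref{inversion}, so their counts coincide, which both halves the bookkeeping and provides an internal consistency check. Concretely, \eqref{cor3.5a.1} is the inversion of \eqref{aw:def4to5}, and \eqref{cor3.5a.7c} is the inversion of \eqref{cor3.5a.6}, so those pairs contribute equally; I would identify the remaining inversion-linked pairs among \eqref{cor3.5a.2}, \eqref{cor3.5a.7}, \eqref{cor3.5a.7b} and verify that the orbit structure is compatible. The final step is to add the per-class contributions and confirm the total is $720$, thereby exhibiting the one-to-one correspondence with the $S_6$ symmetry group of the terminating balanced ${}_4\phi_3$ that was predicted in the Remark preceding the statement (since Watson's $q$-analog relates each ${}_8W_7$ to a terminating balanced ${}_4\phi_3$).

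**The main obstacle** I expect is the careful enumeration of how many trivial positionings each ${}_8W_7$ admits without overcounting. Unlike the ${}_4\phi_3$ case, the very-well-poised constraint ties the numerator and denominator parameters together ($a_k \leftrightarrow qb/a_k$), so a naive multiplication of numerator and denominator positionings would double-count configurations that are already identified by the ${}_8W_7$ notation itself. The delicate point is to decide, for each class, exactly which rearrangements are genuinely distinct after accounting for (a) the $4!$ symmetric-parameter permutations, (b) the paired numerator/denominator structure, and (c) any accidental coincidences arising from the product-type parametrizations; getting these multiplicities right so that the seven contributions sum to precisely $720$ is the crux of the proof.
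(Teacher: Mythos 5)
Your overall strategy---a per-class enumeration modeled on Proposition \ref{VDJ4phi3}, with inversion used to pair off classes and halve the bookkeeping---is the same as the paper's, but your proposal stops short of the actual content of the proposition: the enumeration itself. You explicitly defer ``getting these multiplicities right'' as the crux, yet that is precisely what the proof consists of; without per-class counts there is nothing to sum to $720$. The paper's proof is exactly such a tabulation: \eqref{aw:def4to5} and \eqref{cor3.5a.1} contribute $4!=24$ each, the classes \eqref{cor3.5a.7}, \eqref{cor3.5a.7b}, \eqref{cor3.5a.6}, \eqref{cor3.5a.7c} contribute $96$ each, and \eqref{cor3.5a.2} contributes $288$, giving $2\times 24+4\times 96+288=720=|S_6|$.

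More seriously, the classification you propose could not produce these counts. You sort the seven classes into two flavors: those parametrized by single variables from $\{c,d,e,f\}$ (counted by $4!$ relabelings) and those involving the six pairwise products $(cd,ce,cf,de,df,ef)$ (counted as in Proposition \ref{VDJ4phi3}). But the actual structure has three flavors. Only \eqref{aw:def4to5} and \eqref{cor3.5a.1} are counted by bare relabeling. Only \eqref{cor3.5a.2} is governed by the six two-variable products, contributing $48\times 6=288$; it is also the unique self-inverse class, a fact your proposal leaves unresolved (and which is forced by parity: the other six classes pair off under \eqref{inversion} as \eqref{aw:def4to5}$\leftrightarrow$\eqref{cor3.5a.1}, \eqref{cor3.5a.7}$\leftrightarrow$\eqref{cor3.5a.7b}, \eqref{cor3.5a.6}$\leftrightarrow$\eqref{cor3.5a.7c}, so the odd one out must map to itself). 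The remaining four classes are governed by the four \emph{triple}-variable products $(cde,cdf,cef,def)$---a flavor entirely absent from your scheme---and it is this factor of $4$, multiplied by $24$ admissible positionings, that yields $96$ for each of them. Since your two-flavor dichotomy has no mechanism generating that factor of $4$, it cannot arrive at the counts $96$, and hence cannot recover the total $720$; the sum would come out wrong no matter how carefully you handle the overcounting issue you flag for the very-well-poised numerator/denominator pairing.
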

\begin{proof}
As in Proposition \ref{VDJ4phi3}, ignore the positioning of the numerator factor $q^{-n}$.
For \eqref{aw:def4to5}, \eqref{cor3.5a.1}, there are $4!=24$ permutations
of the variables $c,d,e,f$. For \eqref{cor3.5a.6}, there are four 
triple-variable product combinations $(cde, ced, cdf, def)$, 
and therefore the number of possibilities 
for each of the 24 possibilities. Hence  
$|\eqref{cor3.5a.6}|=24\times 4=96$. Its inversion pair 
\eqref{cor3.5a.7c}
has the same number of possibilities, namely 96.
For \eqref{cor3.5a.7} one has 4 variables
with four possible three-variable
product combinations, for each of the four three-variable
product combinations, there are 4 possible numerator
parameter positions for the $cdef$ term, and 6 possible
arrangements of the three remaining variables. Hence
there are 24 possible positionings of the numerator parameters.
Again with four possible three-variable product combinations
$(cde, ced, cfd, def)$, we arrive again at 96, and as well for
its inversion pair \eqref{cor3.5a.7b}, so 
$|\eqref{cor3.5a.7}, \eqref{cor3.5a.7b}, \eqref{cor3.5a.6}, \eqref{cor3.5a.7c}|=96\times 4=384$.
For \eqref{cor3.5a.2}, which is its own self-inverse, 
we have 48 possibilities. Since there are 6 two-variable
product combinations $(cd, ce, cf, de, de, ef)$, then
one has $|\eqref{cor3.5a.2}|=46\times 6=288$.
Summing up the contributions one has
$|\eqref{aw:def4to5}, \eqref{cor3.5a.1}, \eqref{cor3.5a.2}, \eqref{cor3.5a.7},
\eqref{cor3.5a.7b}, \eqref{cor3.5a.6}, \eqref{cor3.5a.7c}|=24\times 2+96\times 4+288=720=|S_6|$.
This completes the proof.
\end{proof}
}

\begin{table}[h]
\centering
\begin{tabular}{|c||c|c|c|c||c|c|c|c|c|c|c|} \hline
\mlt{{\sc Expression}\\{\sc equivalence}\\{\sc class}} 
\TT\TB 
&\eqref{cor3.5a.3}
&\eqref{cor3.5a.4}
&\eqref{cor3.5a.5}
&\eqref{cor3.5a.6b}
&\eqref{aw:def4to5}
&\eqref{cor3.5a.1}
&\eqref{cor3.5a.2}
&\eqref{cor3.5a.7}
&\eqref{cor3.5a.7b}
&\eqref{cor3.5a.6}
&\eqref{cor3.5a.7c}
\\
\hline  
\mlt{{\sc Number of \TT}\\{\sc possibilities\TB}}
\TT\TB 
&216&216&144&144
&{\bf 24}
&{\bf 24}
&{\bf 288}
&{\bf 96}
&{\bf 96}
&{\bf 96}
&{\bf 96}\\
\hline
\end{tabular}
\caption{
{
Total number of arrangements for
terminating balanced ${}_4\phi_3$s \eqref{cor3.5a.3}--\eqref{cor3.5a.6b} and terminating very-well-poised 
${}_8W_7$s \eqref{aw:def4to5}--\eqref{cor3.5a.7c} expressions (in bold) in Corollaries \ref{cor:3.3}-\ref{WatqWhipp2}. The total number
of possibilities, namely the possible arrangements and relabelings,
sum separately to the order $|S_6|=720$,
namely for each set of equivalence classes of ${}_4\phi_3$s and ${}_8W_7$s separately. See Propositions
\ref{VDJ4phi3}, \ref{VDJ8W7}.
}
}
\label{Tableposs}
\end{table}

\noindent See Table \ref{Tableposs} for a delineation
of the total number of
possibilities of expressions
in Corollaries \ref{cor:3.3}-\ref{WatqWhipp2}.

\begin{rem}
Even though the set of transformations for the terminating balanced ${}_4\phi_3$s and ${}_8W_7$s each correspond to the symmetric group $S_6$, the breakdown of equivalence classes does not appear to be isomorphic to any of the subgroups of $S_6$ that the authors investigated. However there are many subgroups of $S_6$ (1455) \cite{GP}, so future investigations may provide some insight here.
\end{rem}

{
\begin{rem}
A straightforward 
analysis of the transformations implied by
Theorem \ref{VanderJeugt43}, indicates
that under these transformations,
each of the four equivalence classes of the balanced ${}_4\phi_3$ expressions in 
Corollary \ref{WatqWhipp2} maps using
Theorem \ref{VanderJeugt43} separately 
to all three other equivalence classes, 
see Figure \ref{figinvinv}.
\end{rem}
}

{
\begin{rem}
Observe that the ${}_4\phi_3$
equivalence classes of expressions
\eqref{cor3.5a.3}--\eqref{cor3.5a.6b} in
Corollary \ref{WatqWhipp2} are paired
\eqref{cor3.5a.3}$\leftrightarrow$\eqref{cor3.5a.4}
and 
\eqref{cor3.5a.5}$\leftrightarrow$\eqref{cor3.5a.6b}
under Gasper and Rahman's inversion 
formula, $z=q$, $r=3$ in \eqref{cor:2.4:r1},
for a terminating basic hypergeometric ${}_4\phi_3$ representation of the Askey--Wilson
polynomial, 
\begin{equation}
\qhyp{4}{3}{q^{-n},a_1,a_2,a_3}{b_1,b_2,b_3}{q,q}\!=\!
q^{-\binom{n}{2}}
(-1)^n
\frac{(a_1,a_2,a_3;q)_n}
{(b_1,b_2,b_3;q)_n}
\qhyp{4}{3}{q^{-n},
\frac{q^{1-n}}{b_1},
\frac{q^{1-n}}{b_2},
\frac{q^{1-n}}{b_3}}
{
\frac{q^{1-n}}{a_1},
\frac{q^{1-n}}{a_2},
\frac{q^{1-n}}{a_3}}
{q,q},
\end{equation}
where $q^{1-n}a_{123}=b_{123}$.
Furthermore, the ${}_8W_7$ equivalence classes of expressions
\eqref{aw:def4to5}--\eqref{cor3.5a.1} are paired using Gasper and
Rahman's inversion formula, namely 
the equality \eqref{aw:def4to5}$=$\eqref{cor3.5a.1}.
See the shaded regions and thick arrows 
in Figure \ref{figinv} for a pictorial representation
of these inversion pairings.
\end{rem}
}

{
\begin{rem}
One can study the mappings of the equivalence classes of 
expressions in Corollaries \ref{cor:3.3}-\ref{WatqWhipp2} to the terminating representations
of the Askey--Wilson polynomials in Theorem \ref{AWthm}
by using the standard map 
\begin{equation}
(b,c,d,e,f)\mapsto\left(q^{-n}\expe^{2i\theta}, a_p\expe^{i\theta}, a_r\expe^{i\theta}, a_t\expe^{i\theta}, a_u\expe^{i\theta}
\right).
\label{stand}
\end{equation}
Both expressions 
\eqref{cor3.5a.3}, \eqref{cor3.5a.4},
map to the basic hypergeometric representation
\eqref{aw:def3}, except with \eqref{cor3.5a.4},
one has $\theta\mapsto-\theta$.
For the ${}_4\phi_3$ expressions under the standard map \eqref{stand}, the expression \eqref{cor3.5a.5} maps to \eqref{aw:def2}
and the expression \eqref{cor3.5a.6b} maps to \eqref{aw:def1}.
Similarly for the ${}_8W_7$ expressions using \eqref{stand}, then \eqref{aw:def4to5}, \eqref{cor3.5a.1} ($\theta\mapsto-\theta$) map to 
\eqref{aw:def4}; \eqref{cor3.5a.2} maps to \eqref{aw:def5}; \eqref{cor3.5a.6}, \eqref{cor3.5a.7b} ($\theta\mapsto-\theta$) maps to \eqref{aw:def6}; and \eqref{cor3.5a.7}, \eqref{cor3.5a.7c} ($\theta\mapsto-\theta$) maps to \eqref{aw:def7}.
See Figure \ref{figinv} for a pictorial representation of these mappings 
from Corollaries \ref{cor:3.3}-\ref{WatqWhipp2} to the terminating
representations of the Askey--Wilson polynomials in Theorem \ref{AWthm}.
\end{rem}
}

\usetikzlibrary{arrows,automata,positioning}
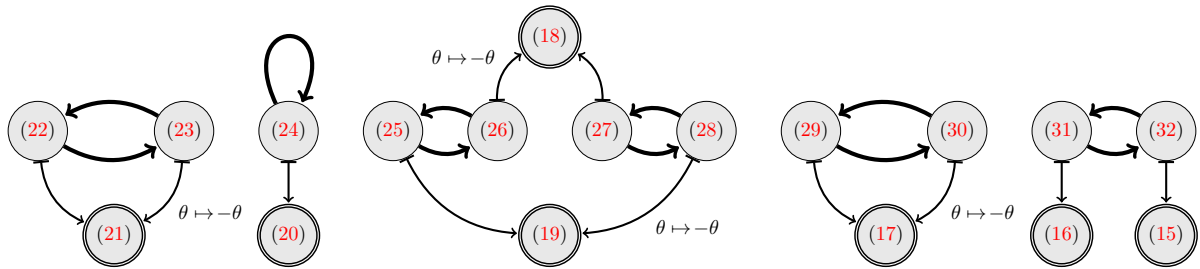
\begin{figure}[H]
\centering
\hspace{-0.0cm}\resizebox{\textwidth}{!}
{
\begin{tikzpicture}[shorten >=1pt,node distance=2cm,on grid,auto]
  \tikzstyle{every state}=[fill={rgb:black,1;white,10}]
  \tikzset{every loop/.style={min distance=10mm,in=65,out=115,looseness=10}}

    \node[state] (r_34) at (2.0,0.0) {\eqref{cor3.5a.2}};

    \node[state] (r_33) [left of=r_34] {\eqref{cor3.5a.1}};
    \node[state] (r_32) [left = 2.8cm of r_33] {\eqref{aw:def4to5}};

    %(28) aw:def6
    %(29) aw:def7
    %(35) cor3.5a.7   -> (37) cor3.5a.6
    %(36) cor3.5a.7b  -> (38) cor3.5a.7c
    %(37) cor3.5a.6   -> (35) cor3.5a.7
    %(38) cor3.5a.7c  -> (36) cor3.5a.7b 

    \node[state]   (r_35)  [right of=r_34]    {\eqref{cor3.5a.7}};
    \node[state]   (r_38)  [right of=r_35]    {\eqref{cor3.5a.7b}};
    \node[state]   (r_36)  [right of=r_38]    {\eqref{cor3.5a.6}};
    \node[state]   (r_37)  [right of=r_36]    {\eqref{cor3.5a.7c}};

    \node[state]   (r_39)  [right = 2.0cm of r_37]    {\eqref{cor3.5a.3}};
    \node[state]   (r_40)  [right = 2.8cm of r_39]    {\eqref{cor3.5a.4}};
    \node[state]   (r_41)  [right of=r_40]    {\eqref{cor3.5a.5}};
    \node[state]   (r_42)  [right of=r_41]    {\eqref{cor3.5a.6b}};

    \node[state,accepting,thick]   (q_27)  [below of=r_42]    {\eqref{aw:def1}};
    \node[state,accepting,thick]   (q_28)  [below of=r_41]    {\eqref{aw:def2}};
    \node[state,thick,accepting] (q_29)  at (13.435,-2.0) {\eqref{aw:def3}};
    \node[state,accepting,thick]   (q_32)  [below of=r_34]    {\eqref{aw:def5}};

    %\node[state,thick,accepting](q_33)[below right = 2.0cm of r_32]{\eqref{aw:def4}};
    \node[state,thick,accepting] (q_33)  at (-1.34,-2.0) {\eqref{aw:def4}};
    
    \node[state,accepting,thick] (q_30) at (7.0,-2.0) {\eqref{aw:def7}};
    \node[state,accepting,thick] (q_31) at (7.0,1.8) {\eqref{aw:def6}};

    \path[->, line width=0.08cm]    (r_32) edge [bend right] node {}    (r_33);
    \path[->, line width=0.08cm]    (r_33) edge [bend right] node {}    (r_32);
    \path[->, line width=0.08cm]    (r_34) edge [loop above] node {}    ();
    \path[->, line width=0.08cm]    (r_35) edge [bend right] node {}    (r_38);
    \path[->, line width=0.08cm]    (r_38) edge [bend right] node {}    (r_35);
    \path[->, line width=0.08cm]    (r_36) edge [bend right] node {}    (r_37);
    \path[->, line width=0.08cm]    (r_37) edge [bend right] node {}    (r_36);
    \path[->, line width=0.08cm]    (r_39) edge [bend right] node {}    (r_40);
    \path[->, line width=0.08cm]    (r_40) edge [bend right] node {}    (r_39);
    \path[->, line width=0.08cm]    (r_41) edge [bend right] node {}    (r_42);
    \path[->, line width=0.08cm]    (r_42) edge [bend right] node {}    (r_41);
    \path[|->,line width=0.04cm]    (r_42) edge              node {}    (q_27);
    \path[|->,line width=0.04cm]    (r_41) edge              node {}    (q_28);
    \path[|->,line width=0.04cm]    (r_39) edge [bend right] node {}    (q_29);
    \path[|->,line width=0.04cm]    (r_40) edge [bend left] node {$\theta\mapsto\!-\theta$}    (q_29);
    \path[|->,line width=0.04cm]    (r_32) edge [bend right] node {}    (q_33); 
    \path[|->,line width=0.04cm]    (r_33) edge [bend left] node {$\theta\mapsto\!-\theta$}    (q_33);
    \path[|->,line width=0.04cm]    (r_34) edge              node {}    (q_32);
    \path[|->,line width=0.04cm]    (r_35) edge [bend right] node {}    (q_30);
    \path[|->,line width=0.04cm]    (r_37) edge [bend left] node {$\theta\mapsto\!-\theta$}    (q_30);
    \path[|->,line width=0.04cm]    (r_36) edge [bend right] node {}    (q_31);
    \path[|->,line width=0.04cm]    (r_38) edge [bend left] node {$\theta\mapsto\!-\theta$}    (q_31);

    %\draw [line width=60pt,double distance=-50pt,opacity=0.1,gray,line cap=round] (r_36) -- (r_37);
    %\draw [line width=60pt,double distance=-50pt,opacity=0.1,gray,line cap=round] (r_35) -- (r_38);
    %\draw [line width=60pt,double distance=-50pt,opacity=0.1,gray,line cap=round] (r_32) -- (r_33);
    %\draw [line width=60pt,double distance=-50pt,opacity=0.1,gray,line cap=round] (r_39) -- (r_40);
    %\draw [line width=60pt,double distance=-50pt,opacity=0.1,gray,line cap=round] (r_41) -- (r_42);
    %\filldraw[color=gray!5,fill=gray,opacity=0.1] (r_34) circle (7.0ex);   
\end{tikzpicture}
}

\caption{{
This figure depicts {the} equivalence classes of {terminating} ${}_8W_7$ \eqref{aw:def4to5}--\eqref{cor3.5a.7c} and ${}_4\phi_3$
\eqref{cor3.5a.3}--\eqref{cor3.5a.6b} expressions 
in Corollaries \ref{cor:3.3}-\ref{WatqWhipp2} {and their
corresponding equivalence classes of terminating Askey--Wilson basic hypergeometric
representations in Theorem \ref{thm:3.1}, \eqref{aw:def1}--\eqref{aw:def4}}.
{The} expressions {\eqref{aw:def4to5}--\eqref{cor3.5a.6b}}
are paired ({using thick arrows}) using Gasper and Rahman's inversion 
formula \eqref{inversion}. More specifically, to verify the 
inversion pairings for the ${}_4\phi_3$
expressions, one can use \eqref{cor:2.4:r1}, and for the 
${}_8W_7$ expressions, one can use \eqref{inv87}, or more 
explicitly the equality of \eqref{aw:def4to5} and \eqref{cor3.5a.1}.
Note that \eqref{cor3.5a.2} is the sole expression which is its own self-inverse.
{For the terminating Askey--Wilson
hypergeometric representation equivalence classes}
\eqref{aw:def1}--\eqref{aw:def4},
{arrows}
indicate which expressions in Corollaries \ref{cor:3.3}-\ref{WatqWhipp2} {are mapped to} 
under the standard map \eqref{stand} to the terminating
representations of the Askey--Wilson polynomials in Theorem \ref{AWthm}.
Arrows marked $\theta\mapsto-\theta$ indicate that
the expressions {in Corollaries \ref{cor:3.3}-\ref{WatqWhipp2}}
map to the same {terminating} Askey--Wilson basic hypergeometric 
representation {equivalence class} under this mapping.}}
\label{figinv}
\end{figure}

\medskip 

{
Now consider the equivalence classes of 
terminating ${}_8W_7$ expressions
in Corollary \ref{cor:3.3}, namely
\eqref{aw:def4to5}--\eqref{cor3.5a.7c}.
There is a surprising structure to the
behavior under mappings of these equivalence
classes. 
%See Figures \ref{figinv}, \ref{figinvinv}.
%\noindent ====\moro{\bf [HSC: placeholder, do not delete. Still under construction.}]\\
Let us start this discussion by reviewing what is known about the symmetry
of the nonterminating ${}_8W_7$.
For nonterminating very-well-poised ${}_8W_7$ expressions, 
the following result has been previously established in
\cite[Proposition 5]{VanderJeugtRao1999}.
}

{
\begin{thm}[Van der Jeugt and Rao (1999)]
Let $q\in\CCdag$,
${\bf x}:=\{x_1,x_2,x_3,x_4,x_5\}$, $x_k\in\CCast$, $k\in\{0,1,2,3,4,5\}$, 
be six parameters
with 
$f:\CCast^6\times\CCdag\to\CC$ defined by
\begin{equation}
f(x_0;{\bf x};q):=
w\!\left(q^{-1}x_0^3x_{12345};
\frac{x_{012345}}{x_1^2},
\frac{x_{012345}}{x_2^2},
\frac{x_{012345}}{x_3^2},
\frac{x_{012345}}{x_4^2},
\frac{x_{012345}}{x_5^2};q\right),
\end{equation}
where
\begin{equation}
w(b;a,c,d,e,f;q)=
\frac{(
\frac{q^2b^2}{acdef},
\frac{qb}{a},
\frac{qb}{c},
\frac{qb}{d},
\frac{qb}{e},
\frac{qb}{f}
;q)_\infty}{(qb;q)_\infty}
\Whyp87{b}{a,c,d,e,f}{q,\frac{q^2b^2}{acdef}}.
\end{equation}
Then $f(x_0;{\bf x};q)$ satisfies
$f(x_0;{\bf x};q)=f(x_0;p\cdot{\bf x};q)$,
for every element $p\in WB_5$ that has an even number of minus signs in its matrix 
representation. 
Hence the invariance group of the 
very-well-poised nonterminating ${}_8W_7$ is the group $WD_5$.
\label{VanderJeugt87}
\end{thm}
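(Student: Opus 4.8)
The plan is to reduce the statement to a known symmetry result for the nonterminating very-well-poised ${}_8W_7$, specifically to the transformation group generated by the two- and three-term relations governing $w(b;a,c,d,e,f;q)$. The key observation is that the symmetrization $w$ is built from Bailey's nonterminating ${}_8W_7$ together with the $q$-Pochhammer prefactor chosen precisely so that the $q$-analog of Whipple's and Bailey's transformations become \emph{permutation} symmetries among the parameters. First I would make the change of variables explicit: writing $b=q^{-1}x_0^3 x_{12345}$ and $a_i = x_{012345}/x_i^2$ for the five ``lower'' parameters, one checks by direct computation that the very-well-poised balancing $q^2b^2/(a\,c\,d\,e\,f)$ (the argument appearing in $w$) and each ratio $qb/a_i$ are monomials in the $x_k$; the point is to show that these monomials are invariant, as an unordered collection, under the relevant signed-permutation action.

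The main structural step is to identify the abstract group. The parameter space here is governed by the root system $D_5$, and the claim is that the invariance group is the Weyl group $WD_5$ of order $2^4\cdot 5!=1920$. I would build this group from two types of generators. The permutation subgroup $S_5$ (permuting $x_1,\dots,x_5$, equivalently permuting $a,c,d,e,f$) acts as the obvious symmetry of the ${}_8W_7$ in its lower parameters and is manifest from the definition of $w$, since $w$ is already symmetric in $a,c,d,e,f$ up to the prefactor. The sign changes $x_i\mapsto x_i^{-1}$ (or the corresponding reflections in $B_5$) correspond to the genuine, nontrivial ${}_8W_7$ transformation formulas — Bailey's transformation relating two nonterminating very-well-poised ${}_8W_7$'s — and this is where the analytic content lives. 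I would verify that a single elementary sign change, composed with the symmetrization prefactor, reproduces exactly Bailey's three-term or two-term ${}_8W_7$ relation, so that $f(x_0;\mathbf{x};q)=f(x_0;p\cdot\mathbf{x};q)$ holds for that generator.

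The subtle point, and the reason the invariance group is $WD_5$ rather than the full hyperoctahedral group $WB_5$, is the parity constraint: only signed permutations with an \emph{even} number of sign changes preserve $f$. I would track this by noting that the $x_k$ enter the monomials $x_{012345}/x_i^2$ quadratically but enter $b$ and the global prefactor only through the product $x_{12345}$ and through $x_{012345}$, so an \emph{odd} sign change alters an overall monomial factor (a power of $x_0$ or a sign in the prefactor) that cannot be absorbed by any ${}_8W_7$ transformation. Hence odd elements map $f$ to a nontrivially rescaled function, while even elements — which form exactly the index-two subgroup $WD_5\subset WB_5$ — leave $f$ fixed. Concretely, I would check the parity obstruction on the single generator $x_1\mapsto x_1^{-1}$ and then invoke that $WD_5$ is generated by the $S_5$ permutations together with the even sign-change $x_i x_j\mapsto x_i^{-1}x_j^{-1}$.

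The hard part will be the bookkeeping in the second step: verifying that one nontrivial even signed-permutation generator genuinely reduces to Bailey's nonterminating ${}_8W_7$ transformation after the prefactor is accounted for. This requires matching the argument $q^2b^2/(acdef)$ and all six numerator/denominator $q$-Pochhammer factors of $w$ on both sides of the identity, using the reflection formulas for $(a;q)_\infty$ and the elementary $q$-shift identities, and confirming that the resulting equality is precisely the classical transformation rather than a merely formal coincidence. Once a generating set of even signed permutations is handled and the parity obstruction to the odd elements is isolated, the full statement follows because $WD_5$ is generated by those elements and $w$ is manifestly symmetric in its five lower parameters; I would close by remarking that this exactly recovers \cite[Proposition 5]{VanderJeugtRao1999}.
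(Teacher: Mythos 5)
First, a point of comparison: the paper itself does not prove Theorem \ref{VanderJeugt87}; it quotes the result verbatim from Van der Jeugt and Rao \cite[Proposition 5]{VanderJeugtRao1999}, so the only ``proof'' to measure your attempt against is the one in that reference. Your outline is, in substance, that proof: the $S_5$ part of $WD_5$ is the manifest symmetry of $w(b;a,c,d,e,f;q)$ in its five lower parameters (prefactor included), and the one remaining generator of $WD_5$ must be identified with Bailey's two-term transformation of the nonterminating very-well-poised ${}_8W_7$ (Gasper and Rahman \cite[\S 2.10]{GaspRah}). The key computation you defer to the end does work out: under $x_4\mapsto x_4^{-1}$, $x_5\mapsto x_5^{-1}$ one finds $b\mapsto\lambda:=b/(x_4^2x_5^2)=qb^2/(a_1a_2a_3)$, $a_i\mapsto \lambda a_i/b$ for $i=1,2,3$, and $a_4\leftrightarrow a_5$, which is exactly the parameter pattern $(b;a_1,a_2,a_3)\mapsto(\lambda;\lambda a_1/b,\lambda a_2/b,\lambda a_3/b)$ of Bailey's formula with the last two parameters fixed; the Pochhammer prefactor in $w$ is precisely what turns that transformation into an honest invariance of $f$.

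The genuine flaw is the contradiction between your second and third paragraphs. In the second paragraph you propose to ``verify that a single elementary sign change \ldots reproduces exactly Bailey's \ldots relation, so that $f(x_0;{\bf x};q)=f(x_0;p\cdot{\bf x};q)$ holds for that generator.'' A single sign change $x_1\mapsto x_1^{-1}$ is an \emph{odd} element of $WB_5$; it does not lie in $WD_5$, and $f$ is not invariant under it --- which is exactly the parity constraint you state (correctly) in your third paragraph. As written, that verification step would fail: with $\mu=x_1^{-2}$ the single sign change sends $(b,a_1,a_j)$ to $(\mu b,\,a_1/\mu,\,\mu a_j)$, $j\ne1$, so all five lower parameters move, and this is not the pattern of Bailey's (or any) two-term ${}_8W_7$ transformation. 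The invariance computation must be carried out on the even generator $x_ix_j\mapsto x_i^{-1}x_j^{-1}$, as above, after which $WD_5$ is generated by $S_5$ together with that single element. Two smaller points: the relevant Bailey identity is the \emph{two-term} transformation --- invoking a three-term relation would take you outside the statement being proved --- and your heuristic that an odd sign change ``alters an overall monomial factor that cannot be absorbed'' is not a proof of the final clause (that the invariance group is exactly $WD_5$ and not all of $WB_5$); establishing maximality requires an actual argument, e.g.\ exhibiting parameter values at which the odd-transformed function differs, though this is secondary to the invariance claim itself.
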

}

{
Note that the the groups
$WB_n$ and $WD_n$ are the Weyl groups of the root systems of types
$B_n$ and $D_n$ (see \cite[Chapter III]{Humphreys78}). It is clear from Van der Jeugt and Rao's (1999) discussion that the symmetry
group of the nonterminating very-well-poised ${}_8W_7$ is $WD_5$,
%the Weyl group of the root system of type $D_5$, 
$|WD_5|=5!2^4=1920$.
According to Zudilin \cite{Zudilin2019}
this transformation group was clear in Bailey (1964) \cite[Section 7.5]{Bailey64} which
focused on a study of 
the transformations of the 
very-well-poised nonterminating ${}_7F_6$, whose $q$-analog is
the nonterminating very-well-poised ${}_8W_7$.
(See Zudilin \cite[Lemma 8]{Zudilin2004} for a discussion of the 
computation of the order and some properties of this symmetry group which 
is connected to the group structure of the Riemann zeta value $\zeta(3)$.)
}

%\noindent \moro{\bf [Michael Schlosser:
%There is also a relevant paper by S. Lievens
%and J. Van der Jeugt, ``Symmetry groups of Bailey's
%transformations for $_{10}\phi_9$-series".
%J. Comput. Appl. Math. 206 (2007), no. 1, 498–519,
%which you should look at.]}

\medskip
{
Now we discuss the symmetric nature of the
{terminating} ${}_8W_7$s in Corollary \ref{cor:3.3}.  Terminating ${}_8W_7$ expressions may be obtained from nonterminating ${}_8W_7$ expressions by setting one of the numerator parameters equal to $q^{-n}$, $n\in\N_0$. If you apply 
Van der Jeugt and Rao's Theorem \ref{VanderJeugt87} with one of the
numerator parameters equal to some $q^{-n}$,
then some subset of the transformations map
to equivalence classes for terminating expressions, and the complement maps to equivalence classes of nonterminating 
expressions (not explicitly treated in this paper). The result of the mappings using Theorem \ref{VanderJeugt87} from terminating ${}_8W_7$ equivalence classes
to terminating ${}_8W_7$ equivalence classes is listed 
in Table \ref{8W7to8W7} and displayed pictorially in
Figure \ref{figinvinv}.
}

%\begin{table}[h]
%\centering
% \begin{tabular}{||c|c||} \hline
%\mlt{{\sc Original ${}_8W_7$}\\ {\sc Expression}\\ {\sc Equivalence}\\{\sc Class}}  \TT\TB &
%\mlt{{\sc Mapped ${}_8W_7$}\\ {\sc Expression}\\ {\sc Equivalence}\\{\sc Class{es}}}\\
%\hline 
%\hline \{\eqref{aw:def4to5}\} & \{\moro{\bf 120}\eqref{aw:def4to5}, \eqref{cor3.5a.6}\}\\ 
%\hline\{\eqref{cor3.5a.6}\} & \{\eqref{aw:def4to5}, \eqref{cor3.5a.6}\}\\\hline
%\hline\{\eqref{cor3.5a.1}\} & \{\eqref{cor3.5a.1}, \eqref{cor3.5a.7b}\}\\
%\hline\{\eqref{cor3.5a.7b}\} & \{\eqref{cor3.5a.1}, \eqref{cor3.5a.7b}\}\\\hline
%\hline\{\eqref{cor3.5a.2}\} & \{\eqref{cor3.5a.2}, \eqref{cor3.5a.7}, \eqref{cor3.5a.7c}\}\\
%\hline\{\eqref{cor3.5a.7}\} & \{\eqref{cor3.5a.2}, \eqref{cor3.5a.7}, \eqref{cor3.5a.7c}\}\\
%\hline\{\eqref{cor3.5a.7c}\} & \{\eqref{cor3.5a.2}, \eqref{cor3.5a.7}, \eqref{cor3.5a.7c}\}\\  \hline
%\end{tabular}
%\caption{
%{
%This table lists all the
%mappings of equivalence classes which occur if one applies Theorem \ref{VanderJeugt87}
%with original and resulting ${}_8W_7$ expressions being terminating.
%}
%}
%\label{8W7to8W7}
%\end{table}

\begin{table}[!hbt]
\centering
%\begin{tabular}{||c|c|c|c|c||} \hline
%\mlt{{\sc Original ${}_8W_7$}\\ {\sc Expression}\\ {\sc Equivalence}\\{\sc Class}}  \TT\TB &
%\mlt{{\sc Mapped ${}_8W_7$}\\ {\sc Expression}\\ {\sc Equivalence}\\{\sc Class{es}}}&
%\eqref{aw:def4to5}&\eqref{cor3.5a.6}\\
%\hline 
%\hline \{\eqref{aw:def4to5}\} & \{\eqref{aw:def4to5}, \eqref{cor3.5a.6}\}
%&\moro{\bf 120}& \moro{\bf 480}\\ 
%\hline\{\eqref{cor3.5a.6}\} & \{\eqref{aw:def4to5}, \eqref{cor3.5a.6}\}
%&\moro{\bf 120}& \moro{\bf 480}\\ \hline
%&&\eqref{cor3.5a.1}&\eqref{cor3.5a.7b}\\
%\hline\{\eqref{cor3.5a.1}\} & \{\eqref{cor3.5a.1}, \eqref{cor3.5a.7b}\}
%&\moro{\bf 120}& \moro{\bf 480}\\
%\hline\{\eqref{cor3.5a.7b}\} & \{\eqref{cor3.5a.1}, \eqref{cor3.5a.7b}\}
%&\moro{\bf 120}& \moro{\bf 480}\\ \hline
%\end{tabular}
\begin{tabular}{||c|c||c|c||c|c||c|c|c||} \hline
\mlt{{\sc Original ${}_8W_7$}\\ {\sc Expression}\\ {\sc Equivalence}\\{\sc Class}}  \TT\TB &
\mlt{{\sc Mapped ${}_8W_7$}\\ {\sc Expression}\\ {\sc Equivalence}\\{\sc Class{es}}} & \eqref{aw:def4to5}& \eqref{cor3.5a.6} & \eqref{cor3.5a.1} &\eqref{cor3.5a.7b}&
\eqref{cor3.5a.2}&\eqref{cor3.5a.7}&\eqref{cor3.5a.7c}\\ \hline
\hline \{\eqref{aw:def4to5}\} & \{\eqref{aw:def4to5}, \eqref{cor3.5a.6}\}
&{\bf 120}& {\bf 480} & -- & -- & -- & -- & -- \\ 
\hline\{\eqref{cor3.5a.6}\} & \{\eqref{aw:def4to5}, \eqref{cor3.5a.6}\}
&{\bf 120}& {\bf 480} & -- & -- & -- & -- & -- \\ 
\hline\{\eqref{cor3.5a.1}\} & \{\eqref{cor3.5a.1}, \eqref{cor3.5a.7b}\}
& -- & -- & {\bf 120}& {\bf 480} & -- & -- & -- \\
\hline\{\eqref{cor3.5a.7b}\} & \{\eqref{cor3.5a.1}, \eqref{cor3.5a.7b}\}
& -- & -- & {\bf 120}& {\bf 480} & -- & -- & -- \\ 
\hline
\{\eqref{cor3.5a.2}\} & \{\eqref{cor3.5a.2}, \eqref{cor3.5a.7}, \eqref{cor3.5a.7c}\}& -- & -- & -- & -- & {\bf 120}&{\bf 360}&{\bf 120} \\
\hline
\{\eqref{cor3.5a.7}\} & \{\eqref{cor3.5a.2}, \eqref{cor3.5a.7}, \eqref{cor3.5a.7c}\}& -- & -- & -- & -- & {\bf 120}&{\bf 360}&{\bf 120}\\
\hline
\{\eqref{cor3.5a.7c}\} & \{\eqref{cor3.5a.2}, \eqref{cor3.5a.7}, \eqref{cor3.5a.7c}\}& -- & -- & -- & -- &
{\bf 120}&{\bf 360}&{\bf 120} \\  \hline
\end{tabular}
\caption{
{
Given that the 
original and mapped ${}_8W_7$ expressions are terminating, 
this table provides the mapping properties of the 
${}_8W_7$ equivalence classes
\eqref{aw:def4to5}--\eqref{cor3.5a.7c}  under the action of Theorem \ref{VanderJeugt87}.}
{The numbers on the right-part of the
table indicate the total number of ${}_8W_7$ expressions which are 
mapped using Theorem \ref{VanderJeugt87},
given a specific choice of parameter labeling (dashes represent zero). 
See Figure \ref{figinvinv} for 
a graphical representation of these
mapping properties.}
}
\label{8W7to8W7}
\end{table}

\begin{figure}[H]
\centering
\hspace{-0.0cm}\resizebox{0.75\textwidth}{!}
{
\begin{tikzpicture}
[shorten >=1pt,node distance=5cm,on grid,auto]
%[node distance=0pt, nodes={outer sep=0pt, minimum width=3cm, thick, minimum height=1cm}]
  \tikzstyle{every state}=[fill={rgb:black,1;white,10}]
  \tikzset{every loop/.style={min distance=10mm,in=155,out=205,looseness=15}}

    \node[state] (r_34) at (-2.65,1.7) {\eqref{cor3.5a.2}};
%    \node[state] (r_36) at (0,2.4) [above above right = 2.2cm of r_34]  {\eqref{cor3.5a.7}};
    %\node[state] (r_36) at (0.0,2.4) {\eqref{cor3.5a.7}};
        \node[state] (r_36) at (0.0,3.4) {\eqref{cor3.5a.7}};

%    \node[state] (r_38)  [below below right = 2.2cm of r_34] {\eqref{cor3.5a.7c}};
    \node[state] (r_38) at (0.0,0.0) {\eqref{cor3.5a.7c}};
    \node[state] (r_37)  [right = 2.7cm of r_36] {\eqref{cor3.5a.7b}};
    \node[state] (r_35)  [right = 2.7cm of r_38] {\eqref{cor3.5a.6}};
    \node[state] (r_33)  [right = 2.7cm of r_37] {\eqref{cor3.5a.1}};
    \node[state] (r_32)  [right = 2.7cm of r_35] {\eqref{aw:def4to5}};

    \path[->, line width=0.07cm]  (r_34) edge [loop left] node {} ();
    
    %\path[<->, line width=0.07cm] (r_37) edge [] node {} (r_36);
    \path[->,line width=0.07cm] (r_37) edge [bend right=15] node {} (r_36);
    \path[->,line width=0.07cm] (r_36) edge [bend right=15] node {} (r_37);
    %\path[<->, line width=0.07cm] (r_38) edge [] node {} (r_35);
    \path[->,line width=0.07cm] (r_38) edge [bend right=15] node {} (r_35);
    \path[->,line width=0.07cm] (r_35) edge [bend right=15] node {} (r_38);
    %\path[<->, line width=0.07cm] (r_33) edge [] node {} (r_32);
    \path[->,line width=0.07cm] (r_33) edge [bend right=15] node {} (r_32);
    \path[->,line width=0.07cm] (r_32) edge [bend right=15] node {} (r_33);
  
    %\path[<->,line width=0.035cm] (r_32) edge [] node {} (r_35);
    \path[->,line width=0.035cm] (r_32) edge [bend right=13] node {} (r_35);
    \path[->,line width=0.035cm] (r_35) edge [bend right=13] node {} (r_32);
   %\path[<->,line width=0.035cm] (r_33) edge [] node {} (r_37);
    \path[->,line width=0.035cm] (r_33) edge [bend right=13] node {} (r_37);
    \path[->,line width=0.035cm] (r_37) edge [bend right=13] node {} (r_33);
    %\path[<->,line width=0.035cm] (r_36) edge [] node {} (r_38);
    \path[->,line width=0.035cm] (r_36) edge [bend right=13] node {} (r_38);
    \path[->,line width=0.035cm] (r_38) edge [bend right=13] node {} (r_36);
    %\path[<->,line width=0.035cm] (r_36) edge [] node {} (r_34);
    \path[->,line width=0.035cm] (r_34) edge [bend right=13] node {} (r_36);
    \path[->,line width=0.035cm] (r_36) edge [bend right=13] node {} (r_34);
    %\path[<->,line width=0.035cm] (r_34) edge [] node {} (r_38);
    \path[->,line width=0.035cm] (r_34) edge [bend right=13] node {} (r_38);
    \path[->,line width=0.035cm] (r_38) edge [bend right=13] node {} (r_34);
    
    %\draw [line width=35pt,double distance=-5pt,opacity=0.1,gray,line cap=round] (r_36) -- (r_37);
     %\draw [line width=35pt,double distance=-5pt,opacity=0.1,gray,line cap=round] (r_35) -- (r_38);
     %  \draw [line width=35pt,double distance=-5pt,opacity=0.1,gray,line cap=round] (r_32) -- (r_33);
    %\filldraw[color=gray!5,fill=gray,opacity=0.25] (r_34) circle (7.0ex); 
    
 \draw[rounded corners,fill=none,line width=35pt,opacity=0.1,double distance=5pt,line cap=round] (r_34) -- (r_36) -- (r_38) -- (r_34) -- (r_36);
  \draw[rounded corners,fill=none,line width=35pt,opacity=0.1,double distance=5pt,line cap=round] (r_37) -- (r_33) -- (r_37);
      \draw[rounded corners,fill=none,line width=35pt,opacity=0.1,double distance=5pt,line cap=round] (r_35) -- (r_32) -- (r_35);
\end{tikzpicture}
}

\caption{{This figure provides a graphical representation 
of Table \ref{8W7to8W7} together with
the action of inversion \eqref{inversion}.
More specifically, it depicts the equivalence classes of
terminating very-well-poised ${}_8W_7$
expressions
\eqref{aw:def4to5}--\eqref{cor3.5a.7c}
in Corollary \ref{cor:3.3}, with
thick arrows indicating pairings using
inversion \eqref{inversion},
and  thin arrows indicating mappings using Theorem \ref{VanderJeugt87}. The shaded 
regions indicate equivalence class 
grouping under Theorem \ref{VanderJeugt87}.
}}
\label{figinvinv}
\end{figure}
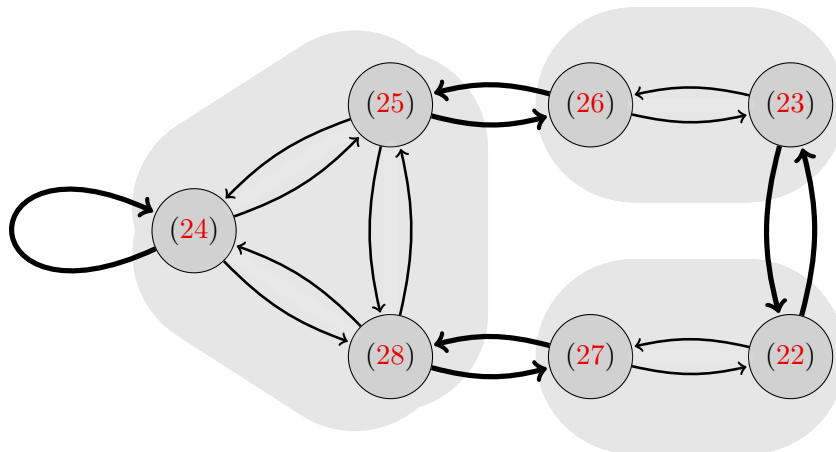

\medskip
{
Upon examination of the 
terminating balanced ${}_8W_7$ expressions
in Corollary \ref{cor:3.3}, we see that there
are seven equivalence classes of terminating ${}_8W_7$
expressions \eqref{aw:def4to5}--\eqref{cor3.5a.7c}. A straightforward 
computer algebra analysis of the transformations implied by 
Van der Jeugt and Rao's nonterminating Proposition, Theorem \ref{VanderJeugt87}
(where we have selected only those expressions which result in terminating
expressions), 
has indicated that under these transformations,
each of the seven equivalence classes of the very-well-poised ${}_8W_7$ expressions split into three separate associations of terminating 
very-well-poised ${}_8W_7$ equivalence classes.
}

{
\begin{rem}
The three separate associations of equivalence classes for terminating
very-well-poised ${}_8W_7$s in Corollary \ref{cor:3.3} which are
obtained by applying Theorem \ref{VanderJeugt87} for nonterminating
${}_8W_7$ misses the connections between the three associations. In order
to connect these associations, one must rely on Gasper and Rahman's inversion
formula, which have no nonterminating counterpart,
so therefore would be undiscoverable using Van der Jeugt and Rao's (1999)
analysis \cite[Proposition 5]{VanderJeugtRao1999}.
\end{rem}
}

%{
%\begin{rem}
%\begin{eqnarray}
%&&\{\eqref{aw:def4to5}\}\mapsto\{\eqref{cor3.5a.3}\},\nonumber\\[-0.05cm]
%&&\{\eqref{cor3.5a.1}\}\mapsto\{\eqref{cor3.5a.4}\},\nonumber\\[-0.05cm]
%&&\{\eqref{cor3.5a.2}\}\mapsto\{\eqref{cor3.5a.3}, \eqref{cor3.5a.5}\},\nonumber\\[-0.05cm]
%&&\{\eqref{cor3.5a.6}\}\mapsto\{\eqref{cor3.5a.3}, \eqref{cor3.5a.5}\},\nonumber\\[-0.05cm]
%&&\{\eqref{cor3.5a.7}\}\mapsto\{\eqref{cor3.5a.3}, \eqref{cor3.5a.6b}\},\nonumber\\[-0.05cm]
%&&\{\eqref{cor3.5a.7b}\}\mapsto\{\eqref{cor3.5a.4}, \eqref{cor3.5a.5}\},\nonumber\\[-0.05cm]
%&&\{\eqref{cor3.5a.7c}\}\mapsto\{\eqref{cor3.5a.4}, \eqref{cor3.5a.6b}\}.\nonumber
%\end{eqnarray}
%\end{rem}
%}

\begin{table}[!hbt]
\centering
\begin{tabular}{||c|c;{1pt/1pt}c|c|c;{1pt/1pt}c|c;{1pt/1pt}c||}
\hline
\mlt{{\sc Original ${}_4\phi_3$}\\ {\sc Expression}\\ {\sc Equivalence}} \TT\TB 
&  &  &  &  & & & \\
{\sc Class} & \eqref{aw:def4to5} & \eqref{cor3.5a.1} & \eqref{cor3.5a.2} & \eqref{cor3.5a.7} & \eqref{cor3.5a.7b} & \eqref{cor3.5a.6} & \eqref{cor3.5a.7c}  \\
\hline\{\eqref{cor3.5a.3}\}
& {\bf 4} & {\bf 4}  & {\bf 56} & {\bf 20} & {\bf 20} & {\bf 20} & {\bf 20}\\
\hline\{\eqref{cor3.5a.4}\} 
& {\bf 4} & {\bf 4}  & {\bf 56} & {\bf 20} & {\bf 20} & {\bf 20} & {\bf 20}\\
\hline\{\eqref{cor3.5a.5}\}  
& {\bf 6} & {\bf 6}  & {\bf 60} & {\bf 18} & {\bf 18} & {\bf 18} & {\bf 18}\\
\hline\{\eqref{cor3.5a.6b}\}
& {\bf 6} & {\bf 6}  & {\bf 60} & {\bf 18} & {\bf 18} & {\bf 18} & {\bf 18}\\
\hline
\end{tabular}
\caption{
{
This table lists the mappings and their total number of
occurrences which occur if one applies the converse for Watson's
$q$-analog of Whipple's theorem, namely Corollary \eqref{WatqWhipp} to
the terminating balanced ${}_4\phi_3$ expressions in Corollary \ref{WatqWhipp2}.
For each ${}_4\phi_3$ expression,terminating very-well poised ${}_8W_7$ expressions are produced when you include all permutations of the 
numerator parameters and denominator parameters. The numbers on the right-part of the table
indicate the total number of expression equivalence classes (out of a $3!^2=36$ permutations) mapped to for a given choice of parameter labeling.
Dotted lines represent boundaries of inversion
pairs.}
}
\end{table}

\begin{table}[!hbt]%[H]
\centering
 \begin{tabular}{||c|c;{1pt/1pt}c|c;{1pt/1pt}c||} \hline
\mlt{{\sc Original ${}_8W_7$}\\ {\sc Expression}\\ {\sc Equivalence}}  \TT\TB  &&&&\\
{\sc Class} &  \eqref{cor3.5a.3} & \eqref{cor3.5a.4} & \eqref{cor3.5a.5} & \eqref{cor3.5a.6b} \\
\hline
\hline\{\eqref{aw:def4to5}\} & {\bf 24} & {\bf 24} & {\bf 24} & {\bf 24} \\
\hline\{\eqref{cor3.5a.1}\} & {\bf 24}  & {\bf 24} & {\bf 24} & {\bf 24} \\
\hline\{\eqref{cor3.5a.2}\} & 
%\{\eqref{cor3.5a.3}, \eqref{cor3.5a.5}\} 
{\bf 28} & {\bf 28} & {\bf 20} & {\bf 20} \\
\hline\{\eqref{cor3.5a.7}\}  & {\bf 30} & {\bf 30} & {\bf 16} & {\bf 16}\\
\hline\{\eqref{cor3.5a.7b}\} & {\bf 30} & {\bf 30} & {\bf 16} & {\bf 16}\\
\hline\{\eqref{cor3.5a.6}\}  & {\bf 30} & {\bf 30} & {\bf 16} & {\bf 16}\\
\hline\{\eqref{cor3.5a.7c}\} & {\bf 30} & {\bf 30} & {\bf 16} & {\bf 16}\\  \hline
\end{tabular}
\caption{
{
This table lists the
mappings which occur if one applies Watson's $q$-analog of
Whipple's theorem, namely Corollary \ref{WatqWhipp2},
including all permutations of the numerator parameters,
to the terminating very-well poised ${}_8W_7$ expressions
in Corollary \ref{cor:3.3}. This results in the production
of terminating balanced ${}_4\phi_3$s for each
${}_8W_7$ expression. The numbers on the right-part of the table indicate the total number of expression 
equivalence classes (out of $2\cdot4!=48$ permutations) mapped
to give a given choice of parameter labeling.
Dotted lines represent boundaries of inversion
pairs.
}
}
\end{table}

\newpage
\section*{Acknowledgments}
We would like to thank Mourad Ismail, Tom Koornwinder,
Eric Rains, %Michael Schlosser,
Hjalmar Rosengren and Joris Van der Jeugt for valuable discussions.
R.S.C-S acknowledges financial  support  through  the  research  project
PGC2018-096504-B-C33  supported  by  Agencia  Estatal  de Investigaci\'on of Spain.

\begin{table}[H]
\centering
\begin{tabular}{||c|c|c|c|c|c|} \hline
\mlt{{\sc Original ${}_8W_7$}\\ {\sc Expression}\\ {\sc Equivalence}\\{\sc Class}}  \TT\TB &
\mlt{{\sc Mapped ${}_8W_7$}\\ {\sc Expression}\\ {\sc Equivalence}\\{\sc Class{es}}} & \eqref{cWqW:1}& \eqref{cWqW:4} & \eqref{cWqW:2} &\eqref{cWqW:3} \\ \hline
\hline \{\eqref{cWqW:1}\} & \{\eqref{cWqW:1}, \eqref{cWqW:4}\}&360&240& --& --\\ 
\hline\{\eqref{cWqW:4}\} & \{\eqref{cWqW:1}, \eqref{cWqW:4}\}&360&240& --& --\\ 
\hline\{\eqref{cWqW:2}\} & \{\eqref{cWqW:2}, \eqref{cWqW:3}\}& --& --&360&240\\
\hline\{\eqref{cWqW:3}\} & \{\eqref{cWqW:2}, \eqref{cWqW:3}\}& --& --&360&240\\
\hline
\end{tabular}
\caption{
{This table describes mapping properties of the converse for Watson's $q$-analog of Whipple's theorem, Corollary \ref{WatqWhipp}.
It first provides
the mapping properties
for the ${}_8W_7$ equivalence classes 
\eqref{cWqW:1}--\eqref{cWqW:4}  which are mapped if one applies Van der Jeugt and Rao's nonterminating Proposition, Theorem \ref{VanderJeugt87} (where we have selected only those expressions which result in terminating expressions).
The numbers on the right-part of the table
indicate the total number of ${}_8W_7$ expression equivalence classes mapped to for a given choice of parameter labeling. Dashes indicate
zero mappings. 
%\goro{Dotted lines represent boundaries of inversion pairs.}
See 
Figure \ref{figinvinvinv}.}
%\moro{\bf [Michael Schlosser: 
%The first sentence (or more) in the description
%of Table 1 could be improved in language.]}
}
\label{TabcqWat}
\end{table}

%\newpage
\usetikzlibrary{arrows,automata,positioning}
%\begin{figure}[H]
\begin{figure}
\centering
\hspace{-0.0cm}\resizebox{\textwidth}{!}
{
\begin{tikzpicture}[shorten >=1pt,node distance=2cm,on grid, auto]
  \tikzset{every loop/.style={min distance=10mm,looseness=12}}

  \tikzstyle{every state}=[fill={rgb:black,1;white,10}]

  \node[state] (q_78) {\eqref{cWqW:3}};
  % % \node[state,initial] (q_78)  at (0.0,0.0) {\eqref{cWqW:1}};

  \node[state] (q_76) [right of =q_78] {\eqref{cWqW:2}};
  \node[state] (q_77) [right = 3.0cm of q_76] {\eqref{cWqW:1}};
  \node[state] (q_79) [right of =q_77] {\eqref{cWqW:4}};

%[right = 2.8cm of r_39] 

 \path[->, line width=0.07cm]  (q_78) edge [in=150,out=210,loop] node {} ();
  \path[->, line width=0.07cm]  (q_79) edge [in=30,out=330,loop] node {} ();

  \path[->,line width=0.07cm] (q_76) edge [bend right=15] node {} (q_77);
  \path[->,line width=0.07cm] (q_77) edge [bend right=15] node {} (q_76);

  \path[->,line width=0.035cm] (q_78) edge [bend right=15] node {} (q_76);
  \path[->,line width=0.035cm] (q_76) edge [bend right=15] node {} (q_78);

  \path[->,line width=0.035cm] (q_77) edge [bend right=15] node {} (q_79);
  \path[->,line width=0.035cm] (q_79) edge [bend right=15] node {} (q_77);

 \draw[rounded corners,fill=none,line width=35pt,opacity=0.1,double distance=5pt,line cap=round] (q_78) -- (q_76) -- (q_78);
 \draw[rounded corners,fill=none,line width=35pt,opacity=0.1,double distance=5pt,line cap=round] (q_77) -- (q_79) -- (q_77);
\end{tikzpicture}
}
\caption{{This figure depicts the relation of equivalence classes of
terminating very-well-poised ${}_8W_7$
expressions
\eqref{cWqW:1}--\eqref{cWqW:4}
in the converse for Watson's $q$-analog of Whipple's theorem, Corollary \ref{WatqWhipp}. Thick arrows indicate equivalence classes
which are paired
using Gasper and Rahman's inversion
formula for terminating basic hypergeometric series \eqref{inversion}.
Thin arrows indicate which nodes map terminating
${}_8W_7$ equivalence classes to terminating ${}_8W_7$ equivalence classes
using Van der Jeugt and Rao's nonterminating {Proposition}, Theorem \ref{VanderJeugt87}
(where we have selected only those expressions which result in terminating
expressions), 
see Table \ref{8W7to8W7}. Shaded regions indicate
which equivalence classes are grouped using Theorem \ref{VanderJeugt87}.}}
\label{figinvinvinv}
\end{figure}
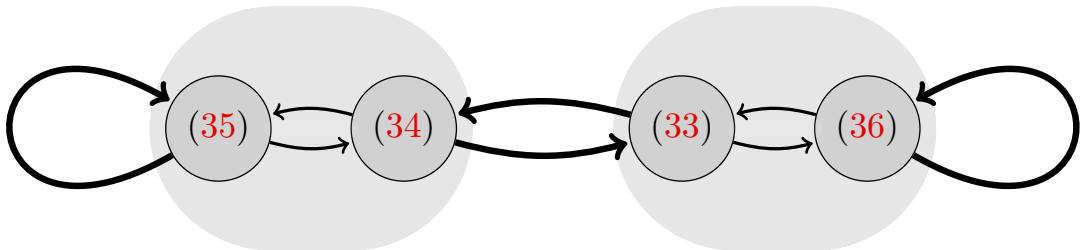

\appendix

\section{Full collections of terminating 4-parameter symmetric interchange transformations}
\label{fullcol}
In this appendix, as a matter of completeness,
we present the entirety of all of the 
parameter interchange transformations for terminating basic 
hypergeometric transformations 
which arise from the Askey--Wilson polynomials.
One may use the transformations presented in 
this subsection to rewrite all the expressions given
in Corollaries \ref{cor:3.3}-\ref{WatqWhipp2}. To learn more about 
the symmetric interchange transformations
for Askey--Wilson polynomials and to see the
proofs of the results presented in this section, see
\cite[Section 3.3]{CohlCostasSantosGe}.

%Note that in Corollary \ref{cor:3.3}, 
The evidence that the first (and second) ${}_8W_7$ in Corollary \ref{cor:3.3} are symmetric
in the variables $c,d,e,f$ is clear. Therefore, all of the
formulas in this corollary are invariant under
the interchange of any two of those variables. This is true whether 
the symmetry between those variables is
evident in the corresponding mathematical expression or not. 
Perhaps, the most famous parameter interchange transformation 
of this sort is Sears' balanced 
${}_4\phi_3$ transformations \cite[(17.9.14)]{NIST:DLMF} which 
demonstrate the invariance (and provide specific transformation
formulas) of the Askey--Wilson polynomials under parameter interchange. 
Other interesting parameter interchange transformations of this type 
can be obtained, such as by \eqref{cor3.5a.2} with
$c\leftrightarrow d$ (preserves the argument), 
$c\leftrightarrow e$, 
$c\leftrightarrow f$, 
$d\leftrightarrow e$, 
$d\leftrightarrow f$ 
interchanged (the invariance under the interchange
$e\leftrightarrow f$ is evident). Furthermore, when 
the symmetry within a set of variables is
evident in the transformation corollaries presented below,
%(e.g., Corollary \ref{cor:4.3} which is symmetric in the variables $c,d,e$;
%Corollary \ref{cor:4.13} which is symmetric
%in the variables $c,d$, etc.), 
then due to this symmetry, non-trivial 
transformation formulas can be obtained
by equating the two expressions with certain variables interchanged. 

\begin{cor}%[Corollary 4 in \cite{CohlCostasSantosGe}]
\cite[Corollary 4]{CohlCostasSantosGe}
\label{cor:3.5}
Let $n\in\N_0$, $b,c,d,e,f\in\CCast$,
$q\in\CCdag$.
Then, one has 
the following
parameter interchange transformations for a terminating ${}_8W_7$:
\vspace{12pt}
\begin{eqnarray}
&&\hspace{-2.2cm}\label{cor3.5:r1}
{}_8W_7\left(\frac{q^{-n}c}{d};q^{-n},\frac{q^{-n}c}{b},
\frac{qb}{de},\frac{qb}{df},c;q,\frac{ef}{b}\right)\\
&&\hspace{-0.2cm}\label{cor3.5:r2}=
\frac{\left(\frac{qb}{de},\frac{qb}{df},\frac{qb}{c},\frac{d}{c},c;q\right)_n}
{\left(\frac{qb}{ce},\frac{qb}{cf},\frac{qb}{d},\frac{c}{d},d;q\right)_n}\,
{}_8W_7\left(\frac{q^{-n}d}{c};q^{-n},\frac{q^{-n}d}{b},
\frac{qb}{ce},\frac{qb}{cf},\boro{d};q,\frac{ef}{b}\right)\\
&&\hspace{-0.2cm}\label{cor3.5:r3}=
\boro{\frac
{\left(\frac{qb}{cd},\frac{qb}{e},\frac{d}{c},e;q\right)_n}
{\left(\frac{qb}{ce},\frac{qb}{d},\frac{e}{c},d;q\right)_n}\,}
{}_8W_7\left(\frac{q^{-n}c}{e};q^{-n},\frac{q^{-n}c}{b},
\frac{qb}{ed},\frac{qb}{ef},c;q,\frac{df}{b}\right)\\
&&\hspace{-0.2cm}\label{cor3.5:r4}=
\boro{\frac{\left(\frac{qb}{ed},\frac{qb}{ef},\frac{qb}{c},
\frac{\boro{d}}{c},c;q\right)_n}
{\left(\frac{qb}{c\boro{e}},\frac{qb}{cf},\frac{qb}{\boro{d}},\frac{c}{e},
\boro{d};q\right)_n}\,}
{}_8W_7\left(\frac{q^{-n}e}{c};q^{-n},\frac{q^{-n}e}{b},
\frac{qb}{cd},\frac{qb}{cf},e;q,\frac{df}{b}\right)\\
&&\hspace{-0.2cm}\label{cor3.5:r5}=
\boro{\frac{\left(\frac{qb}{\boro{cd}},\frac{qb}{\boro{f}},\frac{\boro{d}}{\boro{c}},f;q\right)_n}
{\left(\frac{qb}{\boro{c}f},\frac{qb}{\boro{d}}\frac{f}{c},\boro{d};q\right)_n}\,}
{}_8W_7\left(\frac{q^{-n}c}{f};q^{-n},\frac{q^{-n}c}{b},
\frac{qb}{ef},\frac{qb}{df},c;q,\frac{de}{b}\right)\\
&&\boro{\hspace{-0.2cm}\label{cor3.5:r6}=
\frac{\left(\frac{qb}{fd},\frac{qb}{fe},\frac{qb}{c},\frac{d}{c},c;q\right)_n}
{\left(\frac{qb}{ce},\frac{qb}{cf},\frac{qb}{d},\frac{c}{f},d;q\right)_n}
{}_8W_7\left(\frac{q^{-n}f}{c};q^{-n},\frac{q^{-n}f}{b},
\frac{qb}{cd},\frac{qb}{ce},f;q,\frac{de}{b}\right)}\\
&&\hspace{-0.2cm}\label{cor3.5:r7}\boro{=\frac
{\left(\frac{qb}{ef},\frac{d}{c};q\right)_n}
{\left(\frac{qb}{cf},\frac{d}{e};q\right)_n}
{}_8W_7\left(\frac{q^{-n}e}{d};q^{-n},\frac{q^{-n}e}{b},
\frac{qb}{dc},\frac{qb}{df},e;q,\frac{cf}{b}\right)}\\
&&\hspace{-0.2cm}\label{cor3.5:r8}=\frac
{\left(\frac{qb}{dc},\frac{qb}{df},\boro{\frac{qb}{e}},\frac{d}{c},e;q\right)_n}
{\left(\frac{qb}{ce},\frac{qb}{cf},\frac{qb}{d},\frac{e}{d},d;q\right)_n}
{}_8W_7\left(\frac{q^{-n}d}{e};q^{-n},\frac{q^{-n}d}{b},
\frac{qb}{ec},\frac{qb}{ef},d;q,\frac{cf}{b}\right)\\
&&\hspace{-0.2cm}\label{cor3.5:r9}=
\boro{\frac{\left(\frac{qb}{ef},\frac{d}{c};q\right)_n}
{\left(\frac{qb}{ce},\frac{d}{f};q\right)_n}
{}_8W_7\left(\frac{q^{-n}f}{d};q^{-n},\frac{q^{-n}f}{b},
\frac{qb}{dc},
\frac{qb}{de},
f;q,\frac{ce}{b}\right)}\\
&&\hspace{-0.2cm}\label{cor3.5:r10}=
\frac{\left(\frac{qb}{de},\boro{\frac{qb}{dc},\frac{qb}{f},\frac{d}{c},f};q\right)_n}
{\left(\frac{qb}{ce},\boro{\frac{qb}{cf},\frac{qb}{d}},\frac{f}{d},\boro{d};q\right)_n}
{}_8W_7\left(\frac{q^{-n}d}{f};q^{-n},\frac{q^{-n}d}{b},
\frac{qb}{fc},
\frac{qb}{fe},
d;q,\frac{ce}{b}\right)\\
&&\hspace{-0.2cm}\label{cor3.5:r11}=
\boro{\frac{\left(\frac{qb}{ed},\frac{qb}{f},\frac{d}{c},f;q\right)_n}
{\left(\frac{qb}{cf},\frac{qb}{d},\frac{f}{e},d;q\right)_n}
{}_8W_7\left(\frac{q^{-n}e}{f};q^{-n},\frac{q^{-n}e}{b},
\frac{qb}{fc},\frac{qb}{fd},
e;q,\frac{cd}{b}\right)}\\
&&\hspace{-0.2cm}\label{cor3.5:r12}=
\boro{
\frac{\left(\frac{qb}{df},\frac{qb}{e},\frac{d}{c},e;q\right)_n}
{\left(\frac{qb}{ce},\frac{qb}{d},\frac{e}{f},d;q\right)_n}
{}_8W_7\left(\frac{q^{-n}f}{e};q^{-n},\frac{q^{-n}f}{b},
\frac{qb}{ec},\frac{qb}{ed},
f;q,\frac{cd}{b}\right)}
.
\end{eqnarray}
\end{cor}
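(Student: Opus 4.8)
The plan is to exploit the manifest symmetry in $c,d,e,f$ of the very-well-poised series on the left-hand side of \eqref{cor3.5a.2}. Writing out \eqref{aw:def4to5} via the definition \eqref{VWP87}, the parameters $c,d,e,f$ enter only as the four free numerator entries, as the four denominator entries $\frac{qb}{c},\frac{qb}{d},\frac{qb}{e},\frac{qb}{f}$, and through the argument $\frac{q^{n+2}b^2}{cdef}$; each of these data is invariant under an arbitrary permutation of $c,d,e,f$. Hence the entire expression \eqref{aw:def4to5} is a genuinely symmetric function of $(c,d,e,f)$, even though the ${}_8W_7$ on the right-hand side of \eqref{cor3.5a.2} displays only the evident symmetry $e\leftrightarrow f$.

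First I would record \eqref{cor3.5a.2} in the schematic form $\Phi(c,d,e,f)=P(c,d,e,f)\,W(c,d,e,f)$, where $\Phi$ denotes the symmetric left-hand side \eqref{aw:def4to5}, $W(c,d,e,f)$ is the terminating ${}_8W_7$ appearing in \eqref{cor3.5:r1}, and $P$ is the accompanying $q$-Pochhammer prefactor. Applying an arbitrary $\sigma\in S_4$ to the symmetric variables and using $\Phi(c,d,e,f)=\Phi(\sigma\cdot(c,d,e,f))$ yields the master identity $P(c,d,e,f)\,W(c,d,e,f)=P(\sigma)\,W(\sigma)$, equivalently $W(c,d,e,f)=\bigl(P(\sigma)/P(c,d,e,f)\bigr)\,W(\sigma)$. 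The next step is to identify the distinct images $W(\sigma)$: in $W$ the ordered pair $(c,d)$ plays a distinguished role (it fixes the base $q^{-n}c/d$, the entry $q^{-n}c/b$, and the remaining entries $qb/(de),\,qb/(df)$), while the complementary pair $\{e,f\}$ is symmetric. Consequently $W$ depends on $\sigma$ only through the induced ordered pair consisting of the numerator-special and denominator-special variables, of which there are exactly $4\cdot 3=12$; these are precisely the twelve series \eqref{cor3.5:r1}--\eqref{cor3.5:r12}, indexed by the twelve ordered pairs drawn from $\{c,d,e,f\}$.

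It then remains to verify, for each of the twelve choices, that the ratio $P(\sigma)/P(c,d,e,f)$ collapses to the displayed $q$-Pochhammer quotient. Since $P(\sigma)$ is merely $P$ with $c,d,e,f$ permuted, the common factors $qb,\,qb/e,\,qb/f$ and the like cancel pairwise, leaving the short products shown; for instance the transposition $c\leftrightarrow d$ already reproduces the prefactor of \eqref{cor3.5:r2}. I expect this bookkeeping of cancellations to be the only laborious part of the argument, and the sole genuine subtlety is confirming that the $e\leftrightarrow f$ stabilizer really does reduce the $24$ permutations to the $12$ inequivalent transformations claimed, so that no relation is either missed or double-counted. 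All twelve equalities then follow simultaneously from the single symmetric identity \eqref{cor3.5a.2}, with no further transformation theory required.
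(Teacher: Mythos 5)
Your proposal is correct and takes essentially the same route as the paper: the paper's proof (contained in the cited companion work and sketched in Appendix~\ref{fullcol} here) likewise starts from \eqref{cor3.5a.2}, exploits the manifest symmetry of its left-hand side \eqref{aw:def4to5} in $c,d,e,f$, and equates the permuted right-hand sides, with the evident $e\leftrightarrow f$ invariance reducing the $24$ permutations to the $12$ displayed transformations. Your bookkeeping of the prefactor ratios and the $4\cdot 3=12$ orbit count matches the paper's treatment exactly.
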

%\begin{proof}
%\medskip
%\noindent
%Start with \eqref{cor3.5a.2} and consider %all permutations of the symmetric parameters %$c,d,e,f$ which produce non-trivial %transformations. The ordering of the %elements is given
%by the first argument of the ${}_8W_7$ as %follows:~$
%\{(c,d),(d,c),(c,e),(e,c),(c,f),(f,c), ..., (e,f),(f,e)\}.$
%\end{proof}

\begin{cor}\cite[Corollary 5]{CohlCostasSantosGe}
\label{cor:3.6}
Let $n\in\N_0$, $b,c,d,e,f\in\CCast$, 
$q\in\CCdag$.
Then, one has 
the following parameter interchange transformations for a terminating ${}_8W_7$:
\vspace{12pt}
\begin{eqnarray}
&&\hspace{-2.2cm}\label{cor3.6:r1}
{}_8W_7\left(\frac{qb^2}{def};q^{-n},\frac{qb}{de},
\frac{qb}{df},\frac{qb}{ef},c;q,\frac{q^{n+1}{b}}{c}\right)
\\
&&\hspace{-0.2cm}\label{cor3.6:r2}
=\frac{\left(\frac{qb}{c},\frac{q^2b^2}{def};q\right)_n}
{\left(\frac{qb}{d},\frac{q^2b^2}{cef};q\right)_n}
{}_8W_7\left(\frac{qb^2}{cef};q^{-n},\frac{qb}{ce},
\frac{qb}{cf},\frac{qb}{ef},d;q,\frac{q^{n+1}{b}}{d}\right)
\\
&&\hspace{-0.2cm}\label{cor3.6:r3}
=\frac{\left(\frac{qb}{c},\frac{q^2b^2}{def};q\right)_n}
{\left(\frac{qb}{e},\frac{q^2b^2}{cdf};q\right)_n}
{}_8W_7\left(\frac{qb^2}{cdf};q^{-n},\frac{qb}{cd},
\frac{qb}{cf},\frac{qb}{df},e;q,\frac{q^{n+1}{b}}{e}\right)
\\
&&\hspace{-0.2cm}\label{cor3.6:r4}
=\frac{\left(\frac{qb}{c},\frac{q^2b^2}{def};q\right)_n}
{\left(\frac{qb}{f},\frac{q^2b^2}{cde};q\right)_n}
{}_8W_7\left(\frac{qb^2}{cde};q^{-n},
\frac{qb}{cd},\frac{qb}{ce},\frac{qb}{de},f;q,\frac{q^{n+1}{b}}{f}\right).
\end{eqnarray}
\end{cor}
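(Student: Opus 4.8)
The plan is to read off \eqref{cor3.6:r1}--\eqref{cor3.6:r4} as the three nontrivial consequences of the $c\leftrightarrow d$, $c\leftrightarrow e$, and $c\leftrightarrow f$ symmetries of a single representation already established in Corollary~\ref{cor:3.3}. The key observation is that the ${}_8W_7$ displayed in \eqref{cor3.6:r1} is \emph{exactly} the terminating very-well-poised series appearing in \eqref{cor3.5a.6}. Now Corollary~\ref{cor:3.3} asserts that \eqref{cor3.5a.6} equals \eqref{aw:def4to5}, and in the latter the parameters $c,d,e,f$ enter only as the four free upper parameters paired with the lower parameters $\tfrac{qb}{c},\tfrac{qb}{d},\tfrac{qb}{e},\tfrac{qb}{f}$, while the argument $\tfrac{q^{n+2}b^2}{cdef}$ is plainly symmetric. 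Hence \eqref{aw:def4to5}, and therefore \eqref{cor3.5a.6}, is invariant under every permutation of $c,d,e,f$.

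First I would isolate the symmetric part of the prefactor in \eqref{cor3.5a.6}. The factor $\left(\tfrac{q^2b^2}{cdef},qb;q\right)_n$ depends on $c,d,e,f$ only through the symmetric combination $cdef$, so it is itself invariant under all permutations of $c,d,e,f$ (as is the common multiplier ${\sf A}_n(b,c,d,e,f|q)$). Consequently the symmetry forces the remaining quantity, namely the ${}_8W_7$ of \eqref{cor3.6:r1} divided by $\left(\tfrac{qb}{c},\tfrac{q^2b^2}{def};q\right)_n$, to be symmetric in $c,d,e,f$.

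Then I would simply apply the three transpositions. Because $d,e,f$ already appear symmetrically in the series of \eqref{cor3.6:r1} (through $\tfrac{qb}{de},\tfrac{qb}{df},\tfrac{qb}{ef}$ and the base $\tfrac{qb^2}{def}$), the swaps $d\leftrightarrow e$, $d\leftrightarrow f$, $e\leftrightarrow f$ yield only trivial identities; it is precisely the parameter $c$, sitting alone in the last upper slot, that produces new formulas. Equating the symmetric quantity above with its image under $c\leftrightarrow d$ and clearing denominators gives \eqref{cor3.6:r2} with prefactor $\left(\tfrac{qb}{c},\tfrac{q^2b^2}{def};q\right)_n\big/\left(\tfrac{qb}{d},\tfrac{q^2b^2}{cef};q\right)_n$; similarly $c\leftrightarrow e$ yields \eqref{cor3.6:r3} and $c\leftrightarrow f$ yields \eqref{cor3.6:r4}, the prefactor in each case being the ratio of the asymmetric denominator factor $\left(\tfrac{qb}{c},\tfrac{q^2b^2}{def};q\right)_n$ before and after the swap.

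I expect no genuine obstacle here: the content is entirely inherited from the manifest symmetry of \eqref{aw:def4to5}, and the only thing requiring care is the bookkeeping of the prefactors---correctly separating the symmetric $q$-Pochhammer factor from the asymmetric one, and verifying that $\left(\tfrac{q^2b^2}{def};q\right)_n$ maps to $\left(\tfrac{q^2b^2}{cef};q\right)_n$, $\left(\tfrac{q^2b^2}{cdf};q\right)_n$, $\left(\tfrac{q^2b^2}{cde};q\right)_n$ under the three transpositions. These match the displayed prefactors, so the identities follow.
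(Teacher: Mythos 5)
Your proposal is correct and is essentially the paper's own argument: the paper (see the introductory discussion of Appendix \ref{fullcol} and the proofs of Corollaries \ref{cor3.8} and \ref{cor3.10}) derives these identities by starting from \eqref{cor3.5a.6}, whose left-hand side \eqref{aw:def4to5} is manifestly symmetric in $c,d,e,f$, and applying the permutations that yield non-trivial transformations. Your additional bookkeeping---isolating the symmetric factor $\left(\frac{q^2b^2}{cdef},qb;q\right)_n$ and checking the images of $\left(\frac{qb}{c},\frac{q^2b^2}{def};q\right)_n$ under the transpositions $c\leftrightarrow d$, $c\leftrightarrow e$, $c\leftrightarrow f$---just makes explicit what the paper leaves to the reader.
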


\begin{cor}\cite[Corollary 6]{CohlCostasSantosGe}\label{cor3.8}
Let $n\in\N_0$, $b,c,d,e,f\in\CCast$, 
$q\in\CCdag$.
Then, one has 
the following parameter interchange transformations
for a terminating ${}_4\phi_3$:
\begin{eqnarray}
&&\hspace{-2.2cm}\label{cor3.8:r1}\qhyp43{q^{-n},\frac{qb}{ef},c,d}
{\frac{q^{-n}cd}{b},\frac{qb}{e},\frac{qb}{f}}{q,q}\\
&&\hspace{-0.2cm}\label{cor3.8:r2}=
\frac{\left(\frac{qb}{de},\frac{qb}{c};q\right)_n}
{\left(\frac{qb}{cd},\frac{qb}{e};q\right)_n}\qhyp43{q^{-n},
\frac{qb}{cf},d,e}{\frac{q^{-n}de}{b},\frac{qb}{c},\frac{qb}{f}}{q,q}\\
&&\hspace{-0.2cm}\label{cor3.8:r3}=
\frac{\left(\frac{qb}{df},\frac{qb}{c};q\right)_n}
{\left(\frac{qb}{cd},\frac{qb}{f};q\right)_n}\qhyp43{q^{-n},\frac{qb}{ce},d,f}
{\frac{q^{-n}df}{b},\frac{qb}{c},\frac{qb}{e}}{q,q}\\
&&\hspace{-0.2cm}\label{cor3.8:r4}=
\frac{\left(\frac{qb}{ce},\frac{qb}{d};q\right)_n}
{\left(\frac{qb}{cd},\frac{qb}{e};q\right)_n}\qhyp43{q^{-n},
\frac{qb}{df},c,e}{\frac{q^{-n}ce}{b},\frac{qb}{d},\frac{qb}{f}}{q,q}\\
&&\hspace{-0.2cm}\label{cor3.8:r5}=
\frac{\left(\frac{qb}{cf},\frac{qb}{d};q\right)_n}
{\left(\frac{qb}{cd},\frac{qb}{f};q\right)_n}\qhyp43{q^{-n},\frac{qb}{de},c,f}
{\frac{q^{-n}cf}{b},\frac{qb}{d},\frac{qb}{e}}{q,q}\\
&&\boro{\hspace{-0.2cm}\label{cor3.8:r6}=
\frac{\left(\frac{qb}{ef},\boro{\frac{qb}{c}},\frac{qb}{d};q\right)_n}
{\left(\frac{qb}{de},\boro{\frac{qb}{e}},\frac{qb}{f};q\right)_n}\qhyp43{q^{-n},
\frac{qb}{cd},e,f}{\frac{q^{-n}ef}{b},\frac{qb}{c},\frac{qb}{d}}{q,q}.}
\end{eqnarray}
\end{cor}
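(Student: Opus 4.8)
The plan is to obtain all six identities from the single relation furnished by Watson's $q$-analog of Whipple's theorem (Corollary~\ref{WatqWhipp2}), by exploiting the symmetry of the associated terminating very-well-poised ${}_8W_7$ in its four parameters $c,d,e,f$. From the definition~\eqref{VWP87} the parameters $q^{-n},c,d,e,f$ all enter the series in exactly the same way — each numerator entry $a$ is paired with the denominator entry $qb/a$ — and, for the balanced argument $z=q^{n+2}b^2/(cdef)$, the argument depends on $c,d,e,f$ only through the symmetric product $cdef$. Consequently ${}_8W_7(b;q^{-n},c,d,e,f;q,z)$ with this value of $z$ is invariant under the full symmetric group acting on $\{c,d,e,f\}$. (This is precisely the symmetry of the Askey--Wilson polynomial in its four parameters, carried back through the standard map~\eqref{stand}.)

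First I would invoke~\eqref{cor3.5a.3}, which expresses this ${}_8W_7$ as an explicit $q$-Pochhammer prefactor times the terminating balanced ${}_4\phi_3$ displayed in~\eqref{cor3.8:r6}; call this ${}_4\phi_3$ expression $\Phi$. Applying a permutation $\sigma$ of $\{c,d,e,f\}$ to this equation leaves the ${}_8W_7$ unchanged, so it rewrites the \emph{same} ${}_8W_7$ as a (generally different) prefactor times $\sigma\!\cdot\!\Phi$. Now $\Phi$ is visibly invariant under the two interchanges $c\leftrightarrow d$ and $e\leftrightarrow f$, since each fixes its numerator and denominator parameter lists setwise; hence the stabilizer of $\Phi$ has order $4$, and the $4!=24$ permutations collapse to $24/4=6$ genuinely distinct ${}_4\phi_3$ expressions. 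These six are exactly the six ways of splitting $\{c,d,e,f\}$ into an ordered pair of unordered pairs, and a direct comparison matches them term by term with~\eqref{cor3.8:r1}--\eqref{cor3.8:r6}.

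Since each of the six expressions equals the same ${}_8W_7$ times its own prefactor, equating any two of them and cancelling the common factor $(qb;q)_n$ yields one of the stated transformations, the displayed $q$-Pochhammer quotient being simply the ratio of the two prefactors. I expect the main (indeed essentially the only) labor to be this bookkeeping: verifying, via~\eqref{poch.id:1} and the product conventions, that each such ratio collapses to exactly the quotient printed in the corresponding line of~\eqref{cor3.8:r2}--\eqref{cor3.8:r6}, together with the small but essential check that $z=q^{n+2}b^2/(cdef)$ is genuinely $\sigma$-invariant, so that the left-hand ${}_8W_7$ truly does not move. As an alternative that avoids the prefactor algebra altogether, one may appeal directly to Theorem~\ref{AWthm}: under the standard map~\eqref{stand} each $\Phi$ reduces, up to an explicit factor and a permutation of the four Askey--Wilson parameters, to the ${}_4\phi_3$ representation~\eqref{aw:def3} of $p_n(x;{\bf a}|q)$, whence the parameter symmetry of $p_n$ delivers the six identities at once.
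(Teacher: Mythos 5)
Your proposal is correct and takes essentially the same route as the paper's own proof, which likewise starts from \eqref{cor3.5a.3} and considers all permutations of the symmetric parameters $c,d,e,f$ that produce non-trivial transformations. Your extra bookkeeping --- the order-$4$ stabilizer of the ${}_4\phi_3$ collapsing the $24$ permutations to the six partitions into ordered pairs of unordered pairs, and the prefactor ratios yielding the displayed $q$-Pochhammer quotients --- simply makes explicit what the paper's one-line proof leaves implicit.
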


\begin{proof}
\noindent \boro{
Start with \eqref{cor3.5a.3} and consider all permutations of the symmetric parameters $c,d,e,f$ which produce non-trivial transformations.
}
\end{proof}

%\begin{rem}
%Another set of parameter interchange transformations
%can be obtained by considering all permutations
%of the symmetric parameters $c,d,e,f$ in
%\eqref{cor3.5a.4}. However, one can see that
%these are equivalent to the above Corollary 
%\ref{cor3.8} by replacing
%\[
%(b,c,d,e,f)\mapsto\left(\frac{q^{-2n}}{b},\frac{q^{-n}c}{b},\frac{q^{-n}d}{b},\frac{q^{-n}e}{b},\frac{q^{-n}f}{b}\right).
%\]
%\end{rem}

\boro{
\begin{cor}\cite[Corollary 7]{CohlCostasSantosGe}
\label{cor3.10}
Let $n\in\N_0$, $b,c,d,e,f\in\CCast$, 
$q\in\CCdag$.
Then, one has 
the following parameter interchange transformations for a terminating ${}_4\phi_3$:
\begin{eqnarray}
&&\hspace{-2.2cm}\label{cor3.10:r1}
\qhyp43{q^{-n},\frac{qb}{cd},\frac{qb}{ce},\frac{qb}{cf}}
{\frac{q^2b^2}{cdef},\frac{q^{1-n}}{c},\frac{qb}{c}}{q,q}\\
&&\hspace{-0.2cm}\label{cor3.10:r2}=
\frac{\left(\frac{qb}{d},d;q\right)_n}
{\left(\frac{qb}{c},c;q\right)_n}
\qhyp43{q^{-n},\frac{qb}{dc},\frac{qb}{de},\frac{qb}{df}}
{\frac{q^2b^2}{cdef},\frac{q^{1-n}}{d},\frac{qb}{d}}{q,q}\\
&&\hspace{-0.2cm}\label{cor3.10:r3}=
\frac{\left(\frac{qb}{e},e;q\right)_n}
{\left(\frac{qb}{c},c;q\right)_n}
\qhyp43{q^{-n},\frac{qb}{ec},\frac{qb}{ed},\frac{qb}{ef}}
{\frac{q^2b^2}{cdef},\frac{q^{1-n}}{e},\frac{qb}{e}}{q,q}\\
&&\hspace{-0.2cm}\label{cor3.10:r4}=
\frac{\left(\frac{qb}{f},f;q\right)_n}
{\left(\frac{qb}{c},c;q\right)_n}
\qhyp43{q^{-n},\frac{qb}{fc},\frac{qb}{fd},\frac{qb}{fe}}
{\frac{q^2b^2}{cdef},\frac{q^{1-n}}{f},\frac{qb}{f}}{q,q}.
\end{eqnarray}
\end{cor}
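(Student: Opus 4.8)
The plan is to derive all four balanced ${}_4\phi_3$ transformations from the single representation \eqref{cor3.5a.5} in Corollary \ref{WatqWhipp2}, exploiting the fact that the terminating very-well-poised ${}_8W_7$ appearing on its left-hand side is manifestly symmetric in the parameters $c,d,e,f$. First I would observe that \eqref{cor3.5a.5} expresses this symmetric ${}_8W_7$ as a product of a $q$-Pochhammer prefactor and a balanced ${}_4\phi_3$ in which exactly one of the four parameters --- there, $e$ --- plays a distinguished role: it appears in all three numerator factors $\frac{qb}{ec},\frac{qb}{ed},\frac{qb}{ef}$ and in the lower parameters $\frac{q^{1-n}}{e}$ and $\frac{qb}{e}$, while the denominator prefactor is $\prod_{h\neq e}(\frac{qb}{h};q)_n$. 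Since the left-hand ${}_8W_7$ is invariant under any permutation of $c,d,e,f$, relabeling \eqref{cor3.5a.5} so that the distinguished parameter is successively $c$, $d$, $e$, and $f$ produces four different expressions, all equal to the same ${}_8W_7$.

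Concretely, for each $g\in\{c,d,e,f\}$ one writes \eqref{cor3.5a.5} in the uniform shape
\[
\Whyp87{b}{q^{-n},c,d,e,f}{q,\frac{q^{n+2}b^2}{cdef}}
=\frac{\bigl(\frac{q^2b^2}{cdef},qb,g;q\bigr)_n}{\prod_{h\neq g}\bigl(\frac{qb}{h};q\bigr)_n}\,\Phi_g,
\]
where $\Phi_g$ denotes the balanced ${}_4\phi_3$ singling out $g$; in particular $\Phi_c$ is exactly the series in \eqref{cor3.10:r1}, after the harmless reordering of its upper entries discussed below. The symmetric factor $\bigl(\frac{q^2b^2}{cdef},qb;q\bigr)_n$ is common to all four, so equating the $g=c$ expression with the $g=d,e,f$ expressions and cancelling it leaves
\[
\frac{(c;q)_n}{\prod_{h\neq c}(\frac{qb}{h};q)_n}\,\Phi_c
=\frac{(g;q)_n}{\prod_{h\neq g}(\frac{qb}{h};q)_n}\,\Phi_g .
\]
The two denominator factors $(\frac{qb}{h};q)_n$ with $h\notin\{c,g\}$ are common to both sides and cancel, leaving $\frac{(c;q)_n}{(\frac{qb}{g};q)_n}\Phi_c=\frac{(g;q)_n}{(\frac{qb}{c};q)_n}\Phi_g$, hence
\[
\Phi_c=\frac{\bigl(\frac{qb}{g},g;q\bigr)_n}{\bigl(\frac{qb}{c},c;q\bigr)_n}\,\Phi_g ,
\]
which is precisely the prefactor displayed in \eqref{cor3.10:r2}, \eqref{cor3.10:r3}, \eqref{cor3.10:r4} for $g=d,e,f$ respectively.

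The only genuine work lies in the $q$-Pochhammer bookkeeping of this last step, namely confirming that after cancelling the common symmetric factor and the two factors $(\frac{qb}{h};q)_n$ with $h\notin\{c,g\}$, the surviving ratio collapses exactly to $\bigl(\frac{qb}{g},g;q\bigr)_n/\bigl(\frac{qb}{c},c;q\bigr)_n$; this is routine and I expect no real obstacle. The one point requiring care is that the upper parameters of $\Phi_c$ as produced by relabeling \eqref{cor3.5a.5} emerge in the order $\frac{qb}{ce},\frac{qb}{cd},\frac{qb}{cf}$, so one must invoke the trivial permutation-invariance of the upper list of a ${}_4\phi_3$ to match the order $\frac{qb}{cd},\frac{qb}{ce},\frac{qb}{cf}$ displayed in \eqref{cor3.10:r1}. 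With that identification, all four transformations follow at once; equivalently, one may summarize the argument as: start with \eqref{cor3.5a.5} and run through the permutations of the symmetric parameters $c,d,e,f$ that single out each parameter in turn.
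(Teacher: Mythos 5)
Your proposal is correct and follows essentially the same route as the paper, whose proof reads: start with \eqref{cor3.5a.5} and consider all permutations of the symmetric parameters $c,d,e,f$ which produce non-trivial transformations. You have simply made explicit the $q$-Pochhammer cancellations (the common symmetric factor $\bigl(\frac{q^2b^2}{cdef},qb;q\bigr)_n$ and the two factors $\bigl(\frac{qb}{h};q\bigr)_n$ with $h\notin\{c,g\}$) that the paper leaves to the reader, and these check out exactly, yielding the displayed prefactors $\bigl(\frac{qb}{g},g;q\bigr)_n/\bigl(\frac{qb}{c},c;q\bigr)_n$ for $g=d,e,f$.
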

}
\begin{proof}
%noindent \boro{
Start with \eqref{cor3.5a.5} and consider all permutations of the symmetric parameters $c,d,e,f$ which produce non-trivial transformations.
%}
\end{proof}

%\begin{rem}
%Another set of parameter interchange transformations
%can be obtained by considering all permutations
%of the symmetric parameters $c,d,e,f$ in
%\eqref{cor3.5a.6b}. However, one can see that
%these are equivalent to the above Corollary 
%\ref{cor3.10} by replacing
%\[
%(b,c,d,e,f)\mapsto\left(\frac{q^{-n}f}{e},\frac{qb}{ce},\frac{qb}{d%e},f,\frac{q^{-n}f}{b}\right).
%\]
%\end{rem}

%\noindent \moro{\bf
%[Michael Schlosser:~Are you sure that none of the listed
%transformations repeat? The point is that (55) is different
%from (43) (and (65) different from (59)), so it might be
%that parts of Corollary 14 (resp.~Corollary 16)
%are already given in Corollary 13 (resp. Corollary 15)
%by a change of variables.]
%}

%\section{Some nonterminating basic hypergeometric transformations}
%\setcounter{equation}{0}
%%%%%%%%%%%%%%%%%%%%%%%%%%%%%%%%%%%%%%%%%%%%%%%%%%%%%%%%%%%%%%%%%%%%%%%%%%%%%%%%%%%
%\renewcommand\theequation{A.\arabic{equation}}
%\renewcommand\thethm{A.\arabic{thm}}
%\renewcommand\therem{A.\arabic{rem}}
%\renewcommand\thethm{A.\arabic{thm}}
%%%%%%%%%%%%%%%%%%%%%%%%%%%%%%%%%%%%%%%%%%%%%%%%%%%%%%%%%%%%%%%%%%%%%%%%%%%%%%%%%%%%

%\newpage
%%%%\section*{References}
%\bibliographystyle{plain}
%\bibliography{qtransAW2}
%\bibliography{refbib}
%%\bibliography{../refbib}   % Roberto bib
%\bibliography{/home/hcohl/tex/refbib}   % Howard bib yes!

\def\cprime{$'$} \def\dbar{\leavevmode\hbox to 0pt{\hskip.2ex \accent"16\hss}d}

\end{document}